\definecolor{purple}{rgb}{0.65, 0, 0.9}
\definecolor{orange}{rgb}{1,.5,0}
\definecolor{gray}{rgb}{0.7,.7,0.7}
\def\@abssec#1{\vspace{.1in}\footnotesize \parindent .2in
{\bf #1. }\ignorespaces}
\newtheorem{theorem}{Theorem}[section]
\newtheorem{lemma}[theorem]{Lemma}
\newtheorem{proposition}[theorem]{Proposition}
\newtheorem{corollary}[theorem]{Corollary}
\newtheorem{remark}[theorem]{Remark}
\newcommand*{\rom}[1]{\expandafter\@slowromancap\romannumeralq #1@}
\newcommand{\be}{\mathbf e}
\allowdisplaybreaks \numberwithin{equation}{section}
\renewcommand{\be}{\begin{equation}}
\newcommand{\ee}{\end{equation}}
\newcommand{\colvec}[1]{\begin{pmatrix} #1 \end{pmatrix}}
\begin{document}

\title[Sharp stability for IPM]{Sharp asymptotic stability of the incompressible porous media equation}

\author{Roberta Bianchini}
\address{Istituto per le Applicazioni del Calcolo ''Mauro Picone'', 00185, Rome, Italy}
\email{roberta.bianchini@cnr.it}

\author{Min Jun Jo}
\address{Department of Mathematics, Duke University, Durham, NC 27708 USA}
\email{minjun.jo@duke.edu}

\author{Jaemin Park}
\address{Department of Mathematics, Yonsei University, 03722, Seoul, South Korea}
\email{jpark776@yonsei.ac.kr}

\author{Shan Wang}
\address{Istituto per le Applicazioni del Calcolo ''Mauro Picone'', 00185, Rome, Italy}
\email{wangshan0624@gmail.com}

\subjclass[2020]{76S05 - 35Q35 -34D05  - 76B03}
\keywords{Asymptotic stability - incompressible fluids - porous media equation}
\begin{abstract}
In this paper, we prove the asymptotic stability of the incompressible porous media (IPM) equation near a stable stratified density, for initial perturbations in the Sobolev space \( H^k \) with any \( 2<k \in\mathbb{R} \). While it is known that such a steady state is unstable in \( H^2 \), our result establishes a sharp stability threshold in higher-order Sobolev spaces.

The key ingredients of our proof are twofold.  First, we extract long-time convergence from the decay of a potential energy functional{—despite its non-coercive nature—thereby revealing a variational structure underlying the dynamics.} 
Second, we derive refined commutator estimates to control the evolution of higher Sobolev norms throughout the full range of $k>2$.\color{black}

\end{abstract}

\maketitle
\setcounter{tocdepth}{1}

\tableofcontents

\setcounter{tocdepth}{1}
\section{Introduction}
\subsection{Incompressible porous media equation}
 In this paper, we investigate the asymptotic stability of the incompressible porous media (IPM) equation in the periodic strip $\Omega:=\mathbb{T}\times\mathbb{R}$:
 \begin{align}\label{IPM}
 \begin{cases}
 \rho_t +u\cdot\nabla{\rho} = 0&\text{ in $\Omega$,}\\
 \rho(0,x):=\rho_0(x),
 \end{cases}
 \end{align}
 where the velocity $u$ is uniquely determined by
 \begin{align}\label{Darcy_law}
  u=-\nabla p -\colvec{0 \\ \rho},\quad  \nabla\cdot u=0\quad \text{ in $\Omega$ and }\quad \lim_{|x|\to\infty}|u(x)|=0.
 \end{align}
  As usual, we denote the stream function by $\Psi$ such that $u=\nabla^\perp\Psi$. From the relation between the velocity and the advected scalar $\rho$ in \eqref{Darcy_law}, it follows that $\Psi$ is uniquely determined (up to a constant) as a solution to the following equation:
 \begin{align}\label{stream_IPM}
 \begin{cases}
  -\Delta \Psi = \partial_1\rho, & \text{ in $\Omega$},\\
  \lim_{|x|\to \infty}\Psi(x)= 0.
  \end{cases}
 \end{align}

 Let $\rho_s(x)$ be a stratified density, that is, $\rho_s(x)=\rho_s(x_2)$ is independent of the variable $x_1$. Formally speaking, such a density function induces zero velocity field, implying that it is a steady solution to the IPM equation. The goal of this paper is to study the asymptotic stability around stable steady states satisfying the following properties:  
  \begin{align}\label{density_property}
 \inf_{x_2\in\mathbb{R}}\left(- \partial_2\rho_s(x_2)\right)>0,\quad \rVert \partial_2\rho_s\rVert_{C^{k+1}(\Omega)}<\infty, \text{ for some $k>2$.}
  \end{align}
Clearly  a typical example can be  $\rho_s(x_2)=-x_2$.

In recent years, there has been active research on the well-posedness of the IPM equation. In \cite{MR2337005}, it was proved that the IPM equation is locally well-posed when the density belongs to $C^{1,\alpha}$ for $\alpha > 0$ or to $H^k$ for $k > 2$. This result relies on the fact that the velocity field is Lipschitz, ensuring the associated transport equation is well-defined. However, the question of global well-posedness for general initial data versus the formation of finite-time singularities remains open.

In one direction, the authors in \cite{MR4527834} established long-time growth of Sobolev norms, showing that for fairly general initial data, the norms may grow unbounded as time tends to infinity—assuming a global-in-time solution exists, even when the initial data is smooth. In particular, one of their results shows that even small perturbations of a stable steady state may lead to instability if the perturbation lies in $H^{2-\epsilon}$ for any $\epsilon > 0$. The space $H^2$ is critical for the IPM equation, and strong instability and ill-posedness were established by the first author of the present paper and her collaborators in \cite{bianchini2024non}. Remarkably, C\'ordoba--Mart{\'\i}nez-Zoroa \cite{cordoba2024finite} constructed a finite-time singularity for the IPM equation starting from smooth initial data and smooth forcing. We also mention the work \cite{kiselev2024finite}, in which the authors derived a 1D model of the IPM equation and proved finite-time singularity formation in the absence of forcing.

On the other hand, it has been shown that such (negative) behaviors do not occur near stable steady states when perturbations are taken in strong Sobolev spaces. To the best of our knowledge, this was first established by Elgindi \cite{elgindi2017asymptotic} in both $\mathbb{R}^2$ and $\mathbb{T}^2$, where he showed that if the initial perturbation is in $H^{20}$, the solutions eventually converge to a stratified steady state in the long-time limit. A related question was studied in \cite{castro2019global}, where the authors posed the equation in a periodic channel and proved asymptotic stability of a stable steady state in $H^{10}$. Further progress in this direction was made in \cite{bianchini2024relaxation,jo2024quantitative}, where the required regularity for stability was substantially reduced to $H^{3+}$. The current best result appears in \cite{park2024stability}, where asymptotic stability is established in $H^3$. The goal of the present paper is to push these results to their limit, establishing asymptotic stability in the optimal regularity class.

Before presenting our main result, we briefly introduce a closely related model, known as the AHT model \cite{angenent2003minimizing,brenier2009optimal}. Given a bounded domain $D \subset \mathbb{R}^d$ and a prescribed deformation $y_0 : D \to \mathbb{R}^d$, the system is given by:
\begin{align} \label{AHT_1}
\begin{cases}
y_t + v \cdot \nabla y = 0, \\
Kv = -\nabla p - y, \\
y(0,x) = y_0(x),
\end{cases}
\end{align}
where $K$ is a dissipative operator, such as $K = I$ or $K = -\Delta$.
This model was introduced to study the unique rearrangement $y^*$ with \textcolor{black}{a} convex potential, as it arises in Brenier’s polar factorization \cite{brenier1991polar}. In particular, \cite{nguyen2020global} proved exponential convergence to the minimizer when the initial data is close to a stable one, i.e., when $D y_0(x)$ is strictly positive definite for every $x \in D$.
A connection between the IPM and the AHT model can be observed by choosing $K = I$ and initial data of the form:
\[
y_0 = -\begin{pmatrix} 0 \\ \rho_0 \end{pmatrix}.
\]
With this choice, the vector transport system \eqref{AHT_1} becomes equivalent to the IPM equation. The key difference from the setting in \cite{nguyen2020global} is that, in this case, the IPM equation \eqref{IPM} corresponds to a degenerate scenario, since
\[
D y_0 = \begin{pmatrix} 0 & 0 \\ -\partial_1\rho_0 & -\partial_2 \rho_0 \end{pmatrix}
\]
is not strictly positive definite. While exponential convergence cannot be expected in this case, we will derive an algebraic convergence rate, which is expected to be sharp at least at the linear level.
 
\subsection{Main theorem}\label{main_theorem1}
 Before stating our theorem, we consider  potential candidates for a long-term limit of the solution, following the idea presented in \cite{dalibard2023long}. Let us consider $f\in C^1(\Omega)$ such that $\inf (-\partial_2f)>0$.  Since $f$ is monotonically decreasing in the vertical variable $x_2$,  the implicit function theorem tells us that for any $s\in\mathbb{R}$, we can find a unique level set $\left\{ x\in\Omega: f(x)=s\right\}$, which can be parametrized by the horizontal variable $x_1$. More precisely, we can find $\phi:\mathbb{T}\times \mathbb{R}\to\mathbb{R}$ such that
  \[
  f(x_1,\phi(x_1,s))=s,\text{ for all $x_1\in\mathbb{T}$.}
  \]
  Let us define $\phi_0:\mathbb{R}\to\mathbb{R}$ and $h:\mathbb{T}\times\mathbb{R}\to \mathbb{R}$ as
  \begin{equation}\label{def_average}
\phi_0(s):=\int_{\mathbb{T}}\phi(x_1,s)dx_1,\quad h(x_1,s):=\phi(x_1,s)-\phi_0(s).
\end{equation}
  By definition, $h$ satisfies 
  \[
  \int_{\mathbb{T}}h(x_1,s)dx_1=0.
  \]
  Using \eqref{def_average}, we can define the \textcolor{black}{\emph{measure-preserving stratification} $f^*$} of $f$ as
  \begin{align}\label{stratification_def}
  f^*(x_2):=\left(\phi_0\right)^{-1}(x_2).
  \end{align}
  An illustration of such decomposition can be seen in Figure~\ref{fig1}.
\begin{figure}[h!]
\hspace{0.3cm}\includegraphics[scale=1.2]{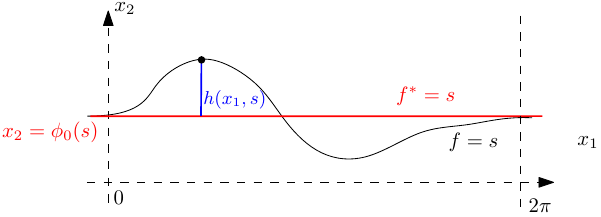}
\caption{An illustration of the level sets of $f$. Each level set $f=s$ is uniquely decomposed into $\phi_0$ and $h$ so that $\int_{\mathbb{R}}h(x_1,s)dx_1=0$ for each $s\in\mathbb{R}$.}
\label{fig1}
\end{figure}

 Now, let us consider $\rho(t)$ as a solution to the IPM equation, where $\rho_0$ is sufficiently close to $\rho_s$ in a topology stronger than $C^1$, and where $\rho_s$ satisfies \eqref{density_property}. Then, the measure-preserving stratification of $\rho_0$ is well-defined. Furthermore, if $\lim_{t \to \infty} \rho(t)$ exists and the limit can be obtained in the $C^1$-norm, then $\rho_0^*$ is the only possible stratified density under a transport equation governed by an incompressible vector field. Noting that $H^2$ is the borderline case that fails to embed into $C^1$ in a two-dimensional domain, we achieve its optimal asymptotic stability in our main theorem.
 \begin{theorem}\label{main_theorem}
Let $\rho_s$ be a stratified density satisfying \eqref{density_property}. Then $\rho_s$ is asymptotically stable for the IPM equation, \eqref{IPM}-\eqref{Darcy_law}, in $H^k(\Omega)$ for any $k\in\mathbb{R}$ such that $k>2$.

  More precisely, setting  \begin{align}\label{alpha_def}
\gamma:=\min\left\{ k-2, \  \inf_{x\in\Omega}\left(- \partial_2\rho_s(x_2)\right)\text{ and }\left(\rVert \partial_2\rho_s\rVert_{C^{k+1}}\right)^{-1}\right\}>0, 
  \end{align} 
  there exist $\varepsilon,\ C>0$ which depends only on $\gamma$ such that if $
\rVert \rho_0 -\rho_s\rVert_{H^k}\le \varepsilon,$ then there exists a unique solution $\rho(t)$ satisfying $\rho-\rho_s\in C([0,\infty) ; H^k)$ to the IPM equation, and the solution $\rho$ satisfies
\[
\rVert \rho(t) - \rho_s\rVert_{H^k}\le C\varepsilon,\text{ for all $t>0$.}
\]
Furthermore, the solution $\rho$ converges to $\rho^*_0$ as 
\begin{align}\label{convergence_1}
\rVert \rho(t)-\rho_0^*\rVert_{L^2}\le C\varepsilon t^{-k/2},\text{ for all $t>0$,}
\end{align}
where $\rho_0^*$ is the  measure-preserving stratification of the  initial density $\rho_0$ defined in \eqref{stratification_def}.
\end{theorem}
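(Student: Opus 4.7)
The plan is to set $\theta := \rho - \rho_s$, so that $\theta$ satisfies
\[
\partial_t\theta + u\cdot\nabla\theta + u_2\,\partial_2\rho_s = 0, \qquad u=\nabla^\perp\Psi, \qquad -\Delta\Psi=\partial_1\theta,
\]
and to establish a-priori estimates that close for small $\lVert\theta(0)\rVert_{H^k}$. Combined with the local $H^k$ theory already in the literature, this will produce a global solution with the claimed asymptotics. The argument proceeds in three stages: (i) a uniform-in-time $H^k$ bound, (ii) extraction of integrated velocity dissipation from a monotone Lyapunov functional, and (iii) a quantitative interpolation yielding the $t^{-k/2}$ rate.

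For stage (i), I would apply a fractional derivative $\Lambda^k$ to the perturbation equation and test against $\Lambda^k\theta$, producing the transport commutator $\langle[\Lambda^k,u\cdot\nabla]\theta,\Lambda^k\theta\rangle$ and the linear forcing $\langle\Lambda^k(u_2\partial_2\rho_s),\Lambda^k\theta\rangle$. The commutator must be estimated by a Kato--Ponce-type fractional Leibniz that loses no derivative, which is tight when $k$ is only slightly above $2$; the key input is the two-dimensional Sobolev embedding $H^k\hookrightarrow C^{1,k-2}$, which fails precisely at $k=2$ and provides $\lVert\nabla u\rVert_{L^\infty}\lesssim\lVert\theta\rVert_{H^k}$ through $u=\nabla^\perp(-\Delta)^{-1}\partial_1\theta$. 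For the linear forcing I would work in a weighted $H^k$-inner product with weight $(-\partial_2\rho_s)^{-1}$, which is bounded and smooth by \eqref{density_property}: integration by parts combined with $u_2=\partial_1\Psi$ and $-\Delta\Psi=\partial_1\theta$ then yields a favorable damping of size $-c\lVert u\rVert_{\dot H^k}^2 = -c\lVert\partial_1\theta\rVert_{\dot H^{k-1}}^2$, together with commutator remainders controlled by the $C^{k+1}$-smoothness of $\rho_s$. Closing by smallness of $\varepsilon$ gives $\lVert\theta(t)\rVert_{H^k}\lesssim\varepsilon$ globally.

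For stage (ii), I would use the classical IPM potential-energy identity $\frac{d}{dt}\int x_2\rho\,dx=-\int|u|^2\,dx$ (with suitable regularization at infinity), recast as the evolution of
\[
\mathcal{E}(t):=\int_{\Omega} x_2\bigl(\rho(t,x)-\rho_0^*(x_2)\bigr)\,dx.
\]
Since vertical stratifications minimize the potential energy among their measure-preserving rearrangements, $\mathcal{E}(t)\ge 0$, and the identity above gives $\mathcal{E}'(t)=-\lVert u(t)\rVert_{L^2}^2$. Monotonicity thus yields $\int_0^\infty\lVert u\rVert_{L^2}^2\,dt<\infty$, which through the stream-function identities is equivalent to $\partial_1\theta\in L^2_t\dot H^{-1}_x$. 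In stage (iii), I would first compare $\mathcal{E}(t)$ quantitatively with $\lVert\partial_1\theta\rVert_{\dot H^{-1}}^2$ (using that $\rho_0^*$ is by construction the layered mean of $\rho(t)$) and then interpolate against the uniform $H^k$ bound of stage (i), producing a closed differential inequality of porous-medium type whose decaying solution is $\mathcal{E}(t)\lesssim t^{-k/2}$. A final Gagliardo--Nirenberg interpolation between this weak decay and the uniform $H^k$ bound promotes it to $\lVert\rho(t)-\rho_0^*\rVert_{L^2}\lesssim\varepsilon\,t^{-k/2}$. The limit is identified as $\rho_0^*$ rather than any other stratification because the measure-preserving stratification is a conserved quantity of incompressible transport, so $\rho(t)^*\equiv\rho_0^*$ for all $t\ge 0$, and any $C^1$ limit must agree with its own stratification.

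The hardest single step is (i) at the threshold $k\to 2^+$, where the commutator estimate is sharp and the weighted linear structure must be organized carefully to obtain a strict sign without spending the smallness of $\varepsilon$ relative to $\gamma$. The other subtle point is (iii): because $\mathcal{E}$ is blind to horizontal rearrangements---the non-coercivity flagged in the abstract---bridging it to a genuine Sobolev norm of $\rho-\rho_0^*$ relies crucially on the conservation of the measure-preserving stratification to kill the otherwise invisible directions.
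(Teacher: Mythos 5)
Your outline captures the overall architecture that the paper actually uses (the potential energy $\mathcal{E}$, its monotone dissipation $\frac{d}{dt}\mathcal{E}=-\|u\|_{L^2}^2$, the comparison $\mathcal{E}\sim\|\rho-\rho_0^*\|_{L^2}^2$, and the interpolation against $\|u\|_{H^k}$ to obtain a sub-exponential decay ODE), and the observation that the measure-preserving stratification is transported so $\rho(t)^*\equiv\rho_0^*$ is exactly what justifies defining $\mathcal{E}$ relative to $\rho_0^*$. However, stage (i) of your plan, as written, does not close for the full range $2<k\le 3$, and this is precisely the technical heart of the theorem.

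The issue is the commutator term in the $H^k$ estimate. A Kato--Ponce bound on $[\Lambda^k,u\cdot\nabla]\theta$ gives a factor $\|\nabla u\|_{L^\infty}\|\theta\|_{H^k}^2$, so after Gr\"onwall you must control $\int_0^T\|\nabla u\|_{L^\infty}\,dt$ uniformly in $T$. The weighted inner product does produce the damping $-c\|u\|_{H^k}^2$, but that damping does not absorb a term of the form $\|\nabla u\|_{L^\infty}\|\theta\|_{H^k}^2$; the bootstrap instead needs integrability of $\|\nabla u\|_{L^\infty}$. From the energy structure one only gets time-averaged decay $\|u\|_{H^2}^2=O(t^{-(k-1)})$, hence (after interpolating with the $O(t^{-1})$ decay of $\|u\|_{H^k}^2$) the best you obtain for $\|\nabla u\|_{L^\infty}$ is $O(t^{-(k-1)/2+})$, which is integrable only when $k>3$. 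The paper's contribution (Lemma~\ref{Nonlinear_estimate_lem1} and Proposition~\ref{energy_IPM_estimate}) is an anisotropic Fourier decomposition of the $u_2\partial_2\theta$ commutator that produces only $\|\nabla u_2\|_{L^\infty}$, not $\|\nabla u\|_{L^\infty}$, and then a separate set of second-order decay estimates (Proposition~\ref{velocity_derivative}, Proposition~\ref{u_2high1}) showing $\|u_2\|_{H^2}^2=O(t^{-k})$ rather than $O(t^{-(k-1)})$. Only then is $\|\nabla u_2\|_{L^\infty}=O(t^{-k/2+})$ integrable for all $k>2$. Your plan omits both the refined commutator and the extra $u_2$ decay, so it would fail below $k=3$. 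Relatedly, your stages (i) and (ii)--(iii) cannot be done independently: the $H^k$ bound in (i) needs the $u_2$ decay from (ii)--(iii), and the decay needs the $H^k$ bound, so the paper runs a coupled bootstrap (Section~6) rather than three sequential steps.

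Two smaller points. First, your power-counting in stage (iii) is off: the coercivity $\mathcal{E}\gtrsim\|\rho-\rho_0^*\|_{L^2}^2$ means you need $\mathcal{E}(t)\lesssim t^{-k}$, not $t^{-k/2}$, to conclude $\|\rho(t)-\rho_0^*\|_{L^2}\lesssim t^{-k/2}$; this is what the ODE $\frac{d}{dt}\mathcal{E}\lesssim-\mathcal{E}^{k/(k-1)}\|u\|_{H^k}^{-2/(k-1)}$ actually yields. Second, the comparison between $\mathcal{E}$ and $\|\partial_1\theta\|_{\dot H^{-1}}^2$ requires the nontrivial lower bound $\|\partial_1\rho\|_{L^2}\gtrsim\|\rho-\rho^*\|_{L^2}$ (Proposition~\ref{propoos}), which comes from a level-set decomposition and a Poincar\'e inequality in the horizontal variable; \quotes{$\rho_0^*$ is the layered mean} is the right intuition but does not by itself give the quantitative inequality.
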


\begin{remark}
The convergence rate in \eqref{convergence_1} is sharp in the sense that for any $\varepsilon>0$, one can construct a solution $\rho^L(t)$ to the linearized equation around $\rho_s$ such that 
\[
\rVert \rho^L(t)-\rho_s\rVert_{L^2}\ge C t^{-k/2-\varepsilon} \text{ for all $t>1$}.
\]
A short proof is given in  Appendix~\ref{Sharpness_1}.
\end{remark}
\begin{remark}
Theorem~\ref{main_theorem} establishes a sharp regularity result in the sense that such $\rho_s$ is known to be unstable in $H^2$ (see \cite{MR4527834} and \cite{bianchini2024non}, where long-time instability in  $H^{2-\epsilon}$ and ill-posedness in $H^2$ have been proved respectively). Previously, the best known stability result \cite{park2024stability} was obtained in $H^3$ on the horizontally periodic strip, improving the regularity requirement in the preceding results \cite{bianchini2024relaxation,castro2019global,elgindi2017asymptotic,jo2024quantitative}.
\end{remark}

\begin{remark}
    It is very likely that our proof could be adapted to establish sharp asymptotic stability of the two-dimensional Boussinesq equations with velocity damping around any equilibrium of the form $(\rho_s, u_s) = ( \rho_s(x_2), 0)$, where $\rho_s(x_2)$ is a stably stratified density profile. Previous results in this direction include: \cite{bianchini2024relaxation}, which treats the equations posed on the whole space $\mathbb{R}^2$ with regularity requirement $H^{s}$ with $s>3$ (see also \cite{wan2019} for results under higher regularity assumptions), and \cite{castro2019}, which considers the case of a finite periodic channel with regularity requirement $H^{k}$ with $k>17$. 
\end{remark}

\subsection{Brief overview of the proof}  Previous results establishing asymptotic stability for the IPM equation (e.g., \cites{bianchini2024relaxation,castro2019global,elgindi2017asymptotic,jo2024quantitative}) rely on Fourier-analytic methods, which explicitly capture the underlying stability mechanism (see Subsubsection~\ref{subsection_linear}). However, such an approach faces several challenges. In particular, deriving key commutator estimates required for achieving optimal regularity of the initial perturbation can be technically difficult, primarily due to potential derivative losses that might occur in rough control of nonlinear terms. Moreover, within the Fourier-analytic framework, the precise long-time asymptotic behavior of the density—specifically, its quantitative convergence to the measure-preserving stratification of the initial data, as characterized in this paper—remains elusive.

The main novelty of our proof of Theorem~\ref{main_theorem} lies in the use of a potential energy functional. This reveals a variational structure of the problem more transparently, thereby providing complete description of the long-time limit of the solution. Another key ingredient toward establishing the optimal regularity requirement on the initial data is the incorporation of additional decay estimates for higher Sobolev norms. A similar strategy was previously adopted in \cite{park2024stability} where an improved regularity requirement, $k\geq 3$, was successfully obtained. In the present work, we build further upon the described strategy to establish asymptotic stability with the optimal regularity threshold. The optimality is accomplished here by sharply exploiting the gap between the subcritical exponent $k>2$ and the critical exponent $k=2$ to control the pivotal quantity $\|\nabla u_2\|_{L^\infty}$ in the aforementioned step of incorporating additional decay estimates. For the reader’s convenience, we briefly outline the structure of our proof in what follows.

\subsubsection{Decay rates in the linearized equation}\label{subsection_linear}
We begin by investigating the decay rates of the velocity field in the linearized IPM equation. For simplicity, we take a look at the linearized system around the simplest stable stratified steady state $\rho_s(x) := -x_2$, and consider initial data with zero horizontal average:
\[
\begin{cases}
\Theta_t = U_2,\\
U = \nabla^{\perp}(-\Delta)^{-1} \partial_1 \Theta,\\
\Theta(0,x) = \Theta_0(x) \quad \text{with } \int_{\mathbb{T}} \Theta_0(x_1, x_2) \, dx_1 = 0.
\end{cases}
\]
Using the identity $U_2 = (-\Delta)^{-1} \partial_{11} \Theta$, we can explicitly solve the linearized equation via Fourier transform (see \eqref{fourier_notation} for notation):
\[
\Theta(t,x_1,x_2) = \sum_{n \in \mathbb{Z} \setminus \{0\}} \int_{\mathbb{R}} \widehat{\Theta_0}_n(\xi) e^{-t \frac{n^2}{n^2 + \xi^2}} e^{i(n x_1 + \xi x_2)} \, d\xi.
\]
For $k > 2$ and $s_1, s_2 \in [0, k]$ with $s_1 + s_2 \le k$, a direct computation (see \eqref{d_denf1}) yields:
\[
\| |D_1|^{s_1} \Theta(t) \|_{\dot{H}^{s_2}}^2 = \sum_{n \neq 0} \int_{\mathbb{R}} |\widehat{\Theta_0}_n(\xi)|^2 (n^2 + \xi^2)^k W(t,n,k) \, d\xi,
\]
where
\[
W(t,n,k) := \frac{n^{2s_1}}{(n^2 + \xi^2)^{k - s_2}} e^{-\frac{2tn^2}{n^2 + \xi^2}}.
\]
Noting that $|n| \ge 1$ and that $\sup_{x > 0} x^\alpha e^{-x} < C_\alpha$ for any $\alpha > 0$, we obtain the bound:
\[
W(t,n,k) \le C_k t^{-(k - s_2)}.
\]
Consequently,
\[
\| |D_1|^{s_1} \Theta(t) \|_{H^{s_2}}^2 \le C(k, \| \Theta_0 \|_{H^k}) \, t^{-(k - s_2)}.
\]
Recalling $U = \nabla^\perp(-\Delta)^{-1} \partial_1 \Theta$, this implies
\begin{align}\label{velocity_linear_1}
\| \Theta(t) \|_{L^2}^2 \le C t^{-k}, \quad \| U(t) \|_{H^2}^2 \le C t^{-(k - 1)}, \quad \| U_2(t) \|_{H^2}^2 \le C t^{-k},
\end{align}
for some constant $C = C(k, \| \Theta_0 \|_{H^k}) > 0$.

Whether such decay rates persist at the nonlinear level is far from trivial. In our analysis, we derive an \emph{averaged} version of these estimates for nonlinear solutions (see Proposition~\ref{u_2high1}):
\begin{align}\label{usdd2es}
\| u(t) \|_{H^2}^2 = \| \rho_0 - \rho_s \|_{H^k}^2 O(t^{-(k - 1)}), \quad
\| u_2(t) \|_{H^2}^2 = \| \rho_0 - \rho_s \|_{H^k}^2 O(t^{-k}),
\end{align}
where we use the following (slightly non-standard) time-averaged notation:
\begin{align}\label{time_average_notation1}
f(t) = O(t^{-\alpha}) \quad \text{if} \quad \frac{1}{t/2} \int_{t/2}^t f(s) \, ds \le \frac{C}{(1 + t)^\alpha} \quad \text{for } t \gg 1 \text{ and some } C, \alpha > 0.
\end{align}

Moreover, the decay rate of $\Theta$ in \eqref{velocity_linear_1} coincides with the convergence rate in \eqref{convergence_1}.

\subsubsection{Nonlinear analysis}
The asymptotic stability analysis for a solution $\rho(t)$ to the (nonlinear) IPM equation is carried out under the following a priori assumption:
\begin{align}\label{assumption_main}
\| \rho(t)-\rho_s \|_{H^k}^2 + \int_0^T \| u(t) \|_{H^k}^2 \,dt < M\varepsilon^2 \ll 1, \quad \text{for all } t \in [0,T], \text{ for some } T, M > 0 \text{ and } 0 < \varepsilon \ll 1,
\end{align}
where the initial perturbation satisfies
\[ \| \rho_0 - \rho_s \|_{H^k} \le \varepsilon. \]

We aim to prove the following two statements in parallel:
\begin{itemize}
  \item[(A)] \textbf{Convergence to the energy minimizer $\rho_0^*$:}
  \[
  \| \rho(t) - \rho_0^* \|_{L^2}^2 \le \frac{C_k \varepsilon^2}{t^k}, \quad \text{as long as \eqref{assumption_main} holds.}
  \]
  \item[(B)] \textbf{Persistence of the bound in \eqref{assumption_main}} for all $T > 0$.
\end{itemize}
Verifying (A) and (B) together establishes Theorem~\ref{main_theorem}. We now outline the proofs separately.

\textbf{Proof scheme for (A).}
As previously discussed, our proof is based on the decay of the potential energy functional. Specifically, we consider:
\[
\mathcal{E}(\rho(t)) := \lim_{s \to \infty} \left( \int_{-s < \rho(t) < s} \rho(t,x) x_2 \, dx - \int_{-s < \rho_0^* < s} \rho^*(x) x_2 \, dx \right),
\]
where $\rho_0^*$ is the measure-preserving rearrangement of the initial data defined in \eqref{stratification_def}. An important property of $\mathcal{E}$ is that $\rho_0^*$ uniquely minimizes $\mathcal{E}$ among all measure-preserving rearrangements of $\rho_0$. Since $\rho(t)$ solves a transport equation with a divergence-free velocity field, $\mathcal{E}$ remains nonnegative. Under assumption~\eqref{assumption_main}, the energy functional satisfies both upper and lower bounds (see Proposition~\ref{propoos}):
\begin{align}\label{energy_as_a_norm}
\| \rho(t) - \rho_0^* \|_{L^2}^2 \lesssim \mathcal{E}(\rho(t)) \lesssim \| \rho(t) - \rho_0^* \|_{L^2}^2, \quad \mathcal{E}(\rho(0)) \lesssim \varepsilon^2.
\end{align}

As observed in \cite{elgindi2017asymptotic}, the functional $\mathcal{E}$ is monotone decreasing:
\begin{align}\label{energy_derivatives_1}
\frac{d}{dt} \mathcal{E}(\rho(t)) = - \| u(t) \|_{L^2}^2.
\end{align}
To prove decay of $\mathcal{E}(t)$, one would ideally derive a coercive inequality like:
\[
\| u(t) \|_{L^2}^2 \ge C \mathcal{E}(\rho(t)),
\]
which would yield exponential decay. However, such an inequality fails because the operator $\partial_1 (-\Delta)^{-1/2}$ is not positive definite: if it were true, then the energy bounds \eqref{energy_as_a_norm} and Darcy's law 
\[
u=\nabla^\perp(-\Delta)\partial_1(\rho-\rho_0^*)
\] would imply
\[
\| \rho - \rho_0^* \|_{L^2} \le C \| \partial_1 (-\Delta)^{-1/2} (\rho - \rho_0^*) \|_{L^2},
\]
which cannot be true in general. Instead, we exploit higher regularity and derive the following estimate:
\begin{align}\label{u_intro_briel}
\| u(t) \|_{L^2}^2 \ge C \mathcal{E}(\rho(t))^{\frac{k}{k-1}} \left( \| u(t) \|_{H^k}^2 \right)^{-\frac{1}{k-1}}.
\end{align}
The derivation of this inequality will be given in Proposition~\ref{propoos}, based on a decomposition of the level sets of $\rho$, illustrated in Figure~\ref{fig1}.

Meanwhile, assumption~\eqref{assumption_main} implies (using the notation \eqref{time_average_notation1}):
\begin{align}\label{uestimae1}
\| u(t) \|_{H^k}^2 = M \varepsilon^2 O(t^{-1}).
\end{align}
Thus (formally),
\[ \left( \| u(t) \|_{H^k}^2 \right)^{-\frac{1}{k-1}} = (M \varepsilon^2)^{-\frac{1}{k-1}} O(t^{\frac{1}{k-1}}). \]
Combining with \eqref{u_intro_briel}, we read the energy identity \eqref{energy_derivatives_1} as
\[
\frac{d}{dt} \mathcal{E} \lesssim -\mathcal{E}^{\frac{k}{k-1}} (M \varepsilon^2)^{-\frac{1}{k-1}} (1 + t)^{\frac{1}{k-1}},
\]
which implies
\[
\mathcal{E}(\rho(t)) \lesssim \frac{M \varepsilon^2}{(1 + t)^k}, \quad \text{as long as \eqref{assumption_main} holds}.
\]
Combined with \eqref{energy_as_a_norm}, this gives the desired convergence rate of $\rho(t)$ to $\rho_0^*$. A full derivation is given in Proposition~\ref{energy_analysis}.

 \textbf{Proof scheme for (B).}
To complete the argument, we must verify that assumption~\eqref{assumption_main} holds for all \( T > 0 \). This is achieved by estimating the evolution of the high Sobolev norm of \( \theta(t) := \rho(t) - \rho_0 \), specifically \( \| \theta(t) \|_{H^k}^2 \). Although standard techniques from transport equations apply, nontrivial technical difficulties arise due to the nonlinear structure of the IPM system. In particular, Proposition~\ref{energy_IPM_estimate} provides the differential inequality:
\[
\frac{d}{dt} \| \theta(t) \|_{H^k}^2 \le -C \| u(t) \|_{H^k}^2 + C \left( \| \nabla u_2(t) \|_{L^\infty} \| \theta(t) \|_{H^k}^2 + \| u(t) \|_{L^2}^2 \right).
\]
We emphasize that although the Lipschitz norm of the velocity vector naturally appears in such estimates for general transport equations, our bound involves only the Lipschitz norm of the vertical component \( u_2 \). Moreover, this estimate holds for the full range \( k > 2 \).

Integrating the inequality in time yields
\begin{align}\label{sia_1}
\sup_{t \in [0,T]} \| \theta(t) \|_{H^k}^2 + \int_0^T \| u(t) \|_{H^k}^2 \, dt &\lesssim \| \theta_0 \|_{H^k}^2 + \sup_{t \in [0,T]} \| \theta(t) \|_{H^k}^2 \int_0^T \| \nabla u_2(t) \|_{L^\infty} \, dt \nonumber\\
&\quad + \int_0^T \| u(t) \|_{L^2}^2 \, dt \nonumber\\
&\lesssim \varepsilon^2 + M \varepsilon^2 \int_0^T \| \nabla u_2(t) \|_{L^\infty} \, dt + \mathcal{E}(0) \nonumber\\
&\lesssim \varepsilon^2 + M \varepsilon^2 \int_0^T \| \nabla u_2(t) \|_{L^\infty} \, dt,
\end{align}
where we have used estimates \eqref{energy_derivatives_1} and \eqref{energy_as_a_norm}.

To validate this bound for arbitrary $T$, it suffices to show:
\begin{align}\label{sia_2}
\int_0^T \| \nabla u_2(t) \|_{L^\infty} \,dt \ll 1,
\end{align}
with a bound independent of $T$. Suppose $T^*$ is the first time \eqref{assumption_main} fails. Then \eqref{sia_1} gives:
\[
M\varepsilon^2 = \sup_{t \in [0,T^*]} \| \theta(t) \|_{H^k}^2 + \int_0^{T^*} \| u(t) \|_{H^k}^2 \,dt 
\le C\varepsilon^2 + C\left( \int_0^T \| \nabla u_2(t) \|_{L^\infty} \,dt \right) M\varepsilon^2 < M\varepsilon^2,
\]
a contradiction, provided $M$ is chosen sufficiently large, independent of $T^*, \varepsilon$.

To verify \eqref{sia_2}, we use Sobolev embedding and interpolation for fixed $0 < \eta < k-2$:
\[
\| \nabla u_2(t) \|_{L^\infty} \le C_{\eta} \| u_2(t) \|_{H^{2+\eta}} \le C_{k,\eta} \| u_2(t) \|_{H^2}^{\frac{k-2-\eta}{k-2}} \| u(t) \|_{H^k}^{\frac{\eta}{k-2}}.
\]
From \eqref{usdd2es} and \eqref{uestimae1}, we \textcolor{black}{deduce that}
\[
\| u_2(t) \|_{H^2} \le \varepsilon O(t^{-k/2}), \quad \| u(t) \|_{H^k} \le (M\varepsilon^2)^{1/2} O(t^{-1/2}).
\]
Combining these, we obtain:
\[
\| \nabla u_2(t) \|_{L^\infty} \le o(M\varepsilon^2) O\left(t^{-\left(\frac{k(k-2-\eta)}{2(k-2)} + \frac{\eta}{2(k-2)}\right)}\right).
\]
Since $k - 2 > 0$, we may choose $\eta \in (0, k-2)$ such that
\[
\frac{k(k-2-\eta)}{2(k-2)} + \frac{\eta}{2(k-2)} > 1.
\]
For example, any $\eta$ satisfying $0 < \eta < \frac{(k-2)^2}{k-1}$ suffices. For such $\eta$, we conclude:
\[
\int_0^T \| \nabla u_2(t) \|_{L^\infty} \,dt = o(M\varepsilon^2),
\]
\textcolor{black}{thereby} verifying \eqref{sia_2} and completing the proof of Theorem~\ref{main_theorem}.
\color{black}

\subsection{Notations} We denote implicit constants arising in estimates by  $C$, which may vary from line to line. Such a constant may depend only on  $\gamma$, as defined in \eqref{alpha_def}. Otherwise, any dependence on other variables will be explicitly mentioned. For two quantities  $A$  and $ B$  satisfying  $A \leq C B$ , we write  $A \leq_C B$  for simplicity.

\subsection*{Data Availability \& Conflict of Interest Statements} Data sharing not applicable to this article as no datasets were generated or analysed during the current study.  The authors declare that they have no conflicts of interest.

\subsection*{Acknowledgements} RB and SW were partially supported  by the Italian Ministry of University and Research, PRIN 2022HSSYPN ``Turbulent Effects vs Stability in Equations from Oceanography'', PNRR Italia Domani, funded
by the European Union NextGenerationEU, CUP B53D23009300001. MJJ was partially supported by NSF DMS-2043024. JP was partially supported by SNSF Ambizione fellowship project PZ00P2-216083, the
Yonsei University Research Fund of 2024-22-0500, and  the POSCO Science Fellowship of POSCO
TJ Park Foundation. 
\section{Preliminaries}
 In this section, we collect preliminary tools of independent interest. 
\subsection{Fractional Sobolev spaces}
Here we recall the usual definition for the fractional Sobolev norms. For $f\in C^\infty(\Omega)$, we define
\[
\rVert f\rVert^2_{\dot{H}^k}:=\sum_{n=0}^\infty \int_{\mathbb{R}}\left(|n|^{2k}+|\xi|^{2k}\right)|\widehat{f}_n(\xi)|^2\,d\xi,
\]
where
\begin{align}\label{fourier_notation}
\widehat{f}_n(\xi):=\int_{\mathbb{R}}\int_{\mathbb{T}}f(x_1,x_2)e^{-i n x_1}e^{-ix_2\xi}dx_1dx_2.
\end{align}
We also denote
\[
\rVert f\rVert_{H^k}^2:=\rVert f\rVert_{L^2}^2+ \rVert f\rVert_{\dot{H}^k}^2.
\]
For $k>0$, we will use the following fractional Laplacians:
\begin{equation}\label{d_denf1}
\begin{aligned}
|D_1|^{k}f(x_1,x_2)&:=\sum_{n\in\mathbb{Z}}|n|^k f_n(x_2)e^{in x_1},\text{ where $f_n(x_2):=\int_{\mathbb{T}}f(x_1,x_2)e^{-i n x_1}dx_1$,}\\
|D_2|^{k}f(x_1,x_2)&:=\int_{\mathbb{R}}|\xi_1|^{k}\widehat{f}(x_1,\xi)e^{i\xi x_2}d\xi,\text{ where $\widehat{f}(x_1,\xi):=\int_{\mathbb{R}}f(x_1,x_2)e^{-i\xi x_2}dx_2$},\\
|D|^k f(x_1,x_2)&:=(|D_1|^k+|D_2|^k)f(x_1,x_2).
\end{aligned}
\end{equation}
\subsection{Time average decay}
 In this subsection, we recall useful inequalities arising in the subsequent energy estimates.
 
 \begin{lemma}\cite[Lemma 2.1]{park2024stability}\label{ABC_ODsdE}
Let $\alpha>0,\ 1<n$. Let  $a(t)$ and $f(t)$ be nonnegative functions on $[0,T]$ such that
\[
\frac{d}{dt}f(t) \le- a(t)^{-\alpha}f(t)^n,\quad f(0)=f_0.
\]
Then, $f$ satisfies
\[
f(t)\le_{\alpha,n} \frac{A^{\alpha/(n-1)}}{t^{(\alpha+1)/(n-1)}},\text{for all $t\in [0,T]$, where $A:=\int_0^t a(s)ds.$}
\]
\end{lemma}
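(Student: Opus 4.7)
The plan is to reduce the differential inequality to an algebraic bound on $f$ by separation of variables, and then to bound the resulting time integral of $a^{-\alpha}$ from below by a power of $A(t)=\int_0^t a(s)\,ds$ using Hölder's inequality.

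First, I would work on the set where $f>0$ (at any zero of $f$ the bound is trivial, because $f\ge 0$ together with the hypothesis forces $f'\le 0$ and hence $f\equiv 0$ thereafter). On this set the hypothesis can be rewritten as
\[
\frac{d}{dt}\!\left(\frac{1}{f(t)^{n-1}}\right)=-(n-1)\frac{f'(t)}{f(t)^{n}}\ge (n-1)\,a(t)^{-\alpha}.
\]
Integrating from $0$ to $t$ and discarding the nonnegative initial term $1/f_0^{n-1}$ yields
\[
f(t)^{n-1}\le\frac{1}{(n-1)\displaystyle\int_0^t a(s)^{-\alpha}\,ds}.
\]

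Next I would lower-bound the denominator in terms of $A(t)$. Applying Hölder's inequality with conjugate exponents $1+\alpha$ and $(1+\alpha)/\alpha$ to the trivial identity
\[
t=\int_0^t a(s)^{-\alpha/(1+\alpha)}\,a(s)^{\alpha/(1+\alpha)}\,ds
\]
gives $t^{1+\alpha}\le\bigl(\int_0^t a(s)^{-\alpha}\,ds\bigr)\,A(t)^{\alpha}$, hence
\[
\int_0^t a(s)^{-\alpha}\,ds\ge\frac{t^{1+\alpha}}{A(t)^{\alpha}}.
\]
Combining the two estimates and raising to the power $1/(n-1)$ produces
\[
f(t)\le(n-1)^{-1/(n-1)}\,\frac{A(t)^{\alpha/(n-1)}}{t^{(1+\alpha)/(n-1)}},
\]
which is precisely the asserted inequality, with constant $(n-1)^{-1/(n-1)}$ depending only on $\alpha$ and $n$.

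I do not expect any serious obstacle here; the argument is essentially a two-line ODE manipulation followed by a single application of Hölder. The only mildly delicate points are (i) handling zeros of $f$, addressed in the first step above, and (ii) the case in which $a^{-\alpha}$ is not locally integrable, where the intermediate bound $f(t)^{n-1}\le 1/((n-1)\int_0^t a^{-\alpha})$ reads $f(t)=0$ and the conclusion holds trivially. No regularity beyond absolute continuity of $f$ is required for the manipulations, since the differential inequality can be integrated in the weak sense.
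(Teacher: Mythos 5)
Your proof is correct. Since the paper cites this lemma from \cite{park2024stability} without reproducing a proof, there is nothing to compare against directly; but your argument — separation of variables for the Bernoulli-type differential inequality followed by H\"older's inequality with exponents $1+\alpha$ and $(1+\alpha)/\alpha$ to convert $\int_0^t a^{-\alpha}$ into a lower bound $t^{1+\alpha}/A^\alpha$ — is the standard and, as far as I can tell, essentially the intended route. The two boundary remarks you include (zeros of $f$ propagate forward, and non-integrable $a^{-\alpha}$ forces $f=0$) are the right ones, though the first could be spelled out slightly more: if $f(t_0)=0$ and $f$ were positive on some $(t_1,t_1+\epsilon]$ with $f(t_1)=0$, monotonicity of $1/f^{n-1}$ on that interval together with $1/f^{n-1}\to\infty$ as $t\to t_1^+$ forces $f\equiv0$ there, a contradiction. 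No gap.
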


\begin{lemma}\cite[Lemma 2.2]{park2024stability}\label{ODEsdsdBDC}
Let $n>0$ and $f(t),g(t)$ be nonnegative functions on $[0,T]$  such that
\begin{align}\label{ODEsure}
\frac{d}{dt}f(t)\le-g(t)\text{ and } f(t)\le \frac{C}{t^n},
\end{align}
for some $C>0$.
Then,  it holds that
\[
\frac{2}{t}\int_{t/2}^t g(s)ds\le_n \frac{C}{t^{n+1}}\text{ for all $t\in [0,T]$.}
\]
\end{lemma}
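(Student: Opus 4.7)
The plan is to exploit the differential inequality $\frac{d}{dt}f(t) \le -g(t)$ by integrating it over the dyadic window $[t/2, t]$, which directly converts the pointwise decay of $f$ into an averaged decay of $g$. The key observation is that the two hypotheses in \eqref{ODEsure} fit together perfectly: the first gives an \emph{antiderivative} control of $\int g$ in terms of $f$, and the second provides precisely the quantitative bound on $f$ needed to close the estimate.

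First, I would integrate $\frac{d}{dt}f(s) \le -g(s)$ on $[t/2,t]$ and use the fundamental theorem of calculus to obtain
\[
\int_{t/2}^{t} g(s)\,ds \;\le\; f(t/2) - f(t).
\]
Since $f$ is nonnegative by assumption, I can drop the boundary term $-f(t)$ to get the cleaner inequality $\int_{t/2}^{t} g(s)\,ds \le f(t/2)$. This is the only structural step; the rest is arithmetic.

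Next, I would plug in the hypothesized decay $f(t/2) \le C/(t/2)^{n} = 2^{n} C / t^{n}$, yielding $\int_{t/2}^{t} g(s)\,ds \le 2^{n} C / t^{n}$. Multiplying by $2/t$ produces
\[
\frac{2}{t}\int_{t/2}^{t} g(s)\,ds \;\le\; \frac{2^{n+1} C}{t^{n+1}},
\]
which is exactly the claimed bound $\lesssim_n C/t^{n+1}$, with the constant depending only on $n$ through the factor $2^{n+1}$.

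There is no real obstacle here; the statement should be viewed as a bookkeeping device that upgrades a pointwise decay of a monotone-decreasing quantity $f$ to an averaged decay of its dissipation $g$. The only subtlety worth flagging is the requirement that $f \ge 0$ (used implicitly to discard $-f(t)$); in the applications of interest—where $f$ is a squared Sobolev norm or a potential energy—this positivity is automatic, so the lemma can be applied without further verification.
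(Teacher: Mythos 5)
Your proof is correct and is the natural argument one would give for this lemma. Since the paper cites this result from \cite{park2024stability} without reproducing a proof, there is nothing to compare against in the text, but your chain of reasoning—integrate the differential inequality over $[t/2,t]$, discard $-f(t)\le 0$, substitute the decay bound $f(t/2)\le 2^n C/t^n$, and multiply by $2/t$—is exactly the intended bookkeeping, with the $n$-dependent constant $2^{n+1}$ arising precisely as you describe.
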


\begin{lemma}\label{Time_average_2}
Let $T> 0$ and $n> 1$. Let $f(t)$ be a nonnegative function on $[0,T]$ such that
\[
\frac{2}{t}\int_{t/2}^{t} f(s)ds\le \frac{A}{t^n},\text{ for some $A>0$, for all $t\in [0,T]$}.
\]
Then, for $0<t<T$, we have 
\[
\int_{t}^{T} f(t)ds \le C_{n}At^{1-n},
\]
where $C_{n}>0$ is a constant that depends  only on $n$.
\end{lemma}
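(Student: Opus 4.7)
The plan is to decompose the interval $[t,T]$ dyadically and apply the hypothesis to each piece. Let $K\ge 0$ be the largest integer such that $2^K t \le T$, and write
\[
\int_t^T f(s)\,ds \;=\; \sum_{k=0}^{K-1}\int_{2^k t}^{2^{k+1}t} f(s)\,ds \;+\; \int_{2^K t}^{T} f(s)\,ds.
\]
For each full dyadic block with $2^{k+1}t\le T$, applying the hypothesis at the scale $\tau=2^{k+1}t\in(0,T]$ yields
\[
\int_{2^k t}^{2^{k+1}t} f(s)\,ds \;=\; \frac{2^{k+1}t}{2}\cdot\frac{2}{2^{k+1}t}\int_{2^k t}^{2^{k+1}t} f(s)\,ds \;\le\; \frac{2^{k+1}t}{2}\cdot\frac{A}{(2^{k+1}t)^n} \;=\; \frac{A}{2\,(2^{k+1}t)^{n-1}}.
\]

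For the final (possibly partial) block, since $2^K t \le T < 2^{K+1}t$ we have $T/2 \le 2^K t$, hence $\int_{2^Kt}^T f \le \int_{T/2}^T f$. Applying the hypothesis now at the scale $\tau = T$ gives
\[
\int_{2^K t}^T f(s)\,ds \;\le\; \int_{T/2}^T f(s)\,ds \;\le\; \frac{T}{2}\cdot\frac{A}{T^n} \;=\; \frac{A}{2\,T^{n-1}} \;\le\; \frac{A}{2\,(2^K t)^{n-1}}.
\]

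Summing the two estimates and using $n>1$ so that the resulting geometric series in the scale parameter converges,
\[
\int_t^T f(s)\,ds \;\le\; \frac{A}{2\,t^{n-1}}\left(\sum_{k=0}^{K-1} 2^{-(k+1)(n-1)} + 2^{-K(n-1)}\right) \;\le\; \frac{A}{2\,t^{n-1}}\sum_{j=0}^{\infty} 2^{-j(n-1)} \;=\; C_n\, A\, t^{1-n},
\]
with $C_n := \frac{1}{2}\bigl(1-2^{-(n-1)}\bigr)^{-1}$ depending only on $n$. This is the desired bound, and it is uniform in $T$. The only subtlety, which is not really a difficulty, is the treatment of the incomplete last block, where one must choose $\tau = T$ in the hypothesis and exploit the inclusion $[2^K t, T]\subset [T/2, T]$ to replace the partial average by a full dyadic average at the top scale.
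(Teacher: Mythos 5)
Your proof is correct and takes essentially the same approach as the paper: a dyadic decomposition of the integration interval followed by summing the resulting geometric series (convergent since $n>1$). The only cosmetic difference is that the paper decomposes dyadically from $T$ downward, enlarging $[t,T]$ slightly so that every block is a full dyadic block, whereas you decompose from $t$ upward and handle the leftover partial block at the top by absorbing it into $[T/2,T]$; both yield the same constant up to bookkeeping.
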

\begin{proof}
  Given $T>t>0$, we fix $N\in\mathbb{N}$ so that
  \begin{align}\label{N_range}
  \frac{T}{2^{N+1}}\le t \le \frac{T}{2^N} \le 2t,
  \end{align}
  and we denote $T_i:= 2^{-i}T$ for $i=0,\ldots, N$. We estimate
  \begin{align*}
  \int_{t}^T f(t)dt &\le  \sum_{i=0}^N (T_i-T_{i+1})\frac{1}{T_i-T_{i+1}}\int_{T_{i+1}}^{T_i}f(t)dt\le \sum_{i=0}^{N}\frac{A(T_i-T_{i+1})}{T_i^n}\le A\sum_{i=0}^NT_i^{1-n}\\
  &\le AT^{1-n} \sum_{i=0}^N \left(2^{n-1}\right)^{i}= AT^{1-n}\frac{2^{(N+1)(n-1)}}{2^{n-1}-1}\le C_n A \left( \frac{T}{2^{N+1}}\right)^{1-n}\le C_n A \left( \frac{T}{2^{N}}\right)^{1-n}\\
  &\le C_n A t^{1-n},
  \end{align*}
  where the last inequality follows from \eqref{N_range}. This gives the desired result.
\end{proof}

\begin{lemma}\label{diff_lem_time}
Let $H,f,g,h$ be nonnegative functions satisfying for some $T>0$,
\[
\frac{d}{dt}H(t)\le -f(t) + h(t)H(t) +g(t), \text{ for $t\in [0,T]$}.
\]
Suppose there exist $A,n>0$ such that 
\[
\frac{2}{t}\int_{t/2}^t H(s) ds \le \frac{A}{t^{n}},\quad \frac{2}{t}\int_{t/2}^t g(s)ds \le \frac{A}{t^{n+1}},\text{  and }\int_0^T h(s)ds\le 1,
\]
for $t\in [0,T]$. Then there exists a constant $C>0$, which does not depend on $A$ nor $T$, such that
\[
\frac{2}{t}\int_{t/2}^t f(s) ds\le C\frac{A}{t^{n+1}} \text{ for $t\in [0,T]$}.
\]
\end{lemma}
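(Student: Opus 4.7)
My plan is to combine an integrating factor that absorbs the $h(t)H(t)$ term with a pigeonhole argument that converts the time-averaged hypothesis on $H$ into a pointwise bound at a well-chosen instant $s^* \in [t/4,t/2]$. Once $H(s^*)$ is under control, the differential inequality can be integrated on $[s^*,t]$ and the nonnegative term $\mu(t)H(t)$ discarded.

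First, set $\mu(t) := \exp\!\bigl(-\int_0^t h(s)\,ds\bigr)$. Since $\int_0^T h(s)\,ds \le 1$, we have $e^{-1} \le \mu(t) \le 1$ on $[0,T]$. A direct computation gives $(\mu H)'(t) = \mu(t)\bigl(H'(t)-h(t)H(t)\bigr) \le -\mu(t)f(t)+\mu(t)g(t)$, which removes the drift $hH$ from the differential inequality. Next, I would apply the hypothesis $\frac{2}{s}\int_{s/2}^s H(r)\,dr \le A/s^n$ at $s=t/2$ to see that the average of $H$ on $[t/4,t/2]$ is at most $2^n A/t^n$. By the mean value theorem for integrals, there exists $s^*\in[t/4,t/2]$ with $H(s^*) \le 2^n A/t^n$. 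Integrating $(\mu H)' \le -\mu f+\mu g$ from $s^*$ to $t$, then dropping the nonnegative term $\mu(t)H(t)$ and using $e^{-1} \le \mu \le 1$, yields
\[
e^{-1}\int_{s^*}^t f(s)\,ds \;\le\; H(s^*) + \int_{s^*}^t g(s)\,ds.
\]

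Finally, since $[t/2,t]\subset[s^*,t]$, the left-hand side dominates $\int_{t/2}^t f$. For the $g$-integral I would split $[s^*,t]\subset [t/4,t/2]\cup[t/2,t]$ and apply the hypothesis on $g$ at both $t$ and $t/2$, obtaining $\int_{t/4}^t g(s)\,ds \le C_n A/t^n$. Adding the bound $H(s^*)\le 2^n A/t^n$ and dividing by $t/2$ delivers the claim with a constant $C=C_n$ independent of $A$ and $T$.

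The main obstacle is conceptual rather than computational: the hypothesis furnishes only an averaged bound on $H$, so one cannot naively start the integration of the differential inequality from a fixed endpoint such as $t/2$, where $H$ may be large. The integrating factor handles the $hH$ drift thanks to the smallness of $\int_0^T h$, while the pigeonhole selection of $s^*$ trades a little bit of the interval of integration for the pointwise datum $H(s^*)$ needed to close the estimate. Once these two ingredients are in place, the rest is a dyadic splitting matching the averaged bound on $g$ with the target averaged bound on $f$.
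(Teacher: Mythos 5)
Your proof is correct, and it takes a genuinely different technical route from the paper's. Both arguments start identically with the integrating factor $\mu(t)=\exp(-\int_0^t h)$, exploit $e^{-1}\le\mu\le1$, and then need a way to launch the integrated differential inequality from a point where $H$ is under control, since the hypothesis on $H$ is only time-averaged. You resolve this by a pigeonhole selection: applying the averaged bound at scale $t/2$ shows the average of $H$ over $[t/4,t/2]$ is at most $2^n A/t^n$, so some $s^*\in[t/4,t/2]$ satisfies a pointwise bound, and you integrate from $s^*$ to $t$. The paper instead avoids selecting any particular starting point: it integrates the relation $\int_s^t fL\le L(s)H(s)+\int_s^t Lg$ over all $s\in[t/4,t]$ and swaps the order of integration, producing the weight $(u-t/4)$, which on $[t/2,t]$ is bounded below by $t/4$. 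The paper's averaging is slightly cleaner in that it sidesteps the existence-of-$s^*$ step (which, strictly speaking, should be phrased as "a nonnegative function cannot exceed its average everywhere" rather than the mean value theorem for integrals, since continuity of $H$ is not explicitly assumed), while your pigeonhole version is arguably more transparent about where the bound on $H$ enters. Both give the same conclusion with a constant $C_n$ independent of $A$ and $T$.
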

\begin{proof}
Let $L(t):=e^{-\int_0^t h(s)ds}$.  Since $\int_0^T h(s)ds\le 1$, we have
\begin{align}\label{hdonotblowup}
 e^{-1}\le L(t)\le 1.
\end{align}
Furthermore, it holds that $\frac{d}{dt} \left(L(t)H(t)\right)\le -f(t)L(t) + g(t)L(t)$, yielding\[
L(t)H(t) - L(s)H(s) \le - \int_s^t f(u)L(u)du + \int_s^t g(u)L(u)du.
\]
Since $H,L\ge 0$, we have
\[
\int_s^t f(u)L(u)du \le L(s)H(s) + \int_s^t L(u)g(u)du.
\]
Integrating both sides over  $s\in [t/4,t]$, we get
\[
\int_{t/4}^t f(u)L(u) \left(u-\frac{t}4\right)du\le \int_{t/4}^t L(s)H(s)ds + \int_{t/4}^{t}L(u)g(u)\left( u-\frac{t}{4}\right)du
\]
From \eqref{hdonotblowup}, we obtain
\[
{\frac{t}{4e}\int_{t/2}^t f(s)ds \le \int_{t/4}^t f(u)L(u) \left(u-\frac{t}4\right)du} \le \int_{t/4}^t H(s)ds + t\int_{t/4}^t g(s)ds,
\]
implying that
\[
\frac{2}{t}\int_{t/2}^t f(s)ds \le \frac{C}{t^2}\int_{t/4}^t H(s)ds + \frac{C}{t}\int_{t/4}^t g(s)ds.
\]
Using the estimates for $H$ and $g$ in the assumption of the lemma, we obtain the desired result.
\end{proof}
\subsection{Stratification and potential energy}
In this subsection, we collect some basic properties of functions that are close to $\rho_s$ in the Sobolev space $H^k$, for $k > 2$. Since the domain $\Omega$ is two-dimensional, the Sobolev embedding $H^k(\Omega) \hookrightarrow C^1(\Omega)$ holds. Consequently, we expect that certain quantitative features of $\rho_s$ - being strictly decreasing - also hold for such nearby functions.

 Firstly, the inverse function theorem, combined with the assumption that $\inf_{x\in\Omega}\left(-\partial_2\rho_s(x_2)\right)>0$
 guarantees the existence of the inverse of $\rho_s$, denoted by  
 \[
 \phi_{\rho_s}(s):=\rho_s^{-1}(s) \text{ for $s\in\mathbb{R}$}.
 \] Moreover,  let us recall from the regularity assumption in \eqref{density_property}  that $\rVert \partial_2\rho_s\rVert_{C^{k+1}}<\infty$. Hence one can straightforwardly deduce, using the inverse function theorem, that 
\begin{align}\label{rkskd2sd}
\rVert \partial_s\phi_{\rho_s}\rVert_{C^3}+\rVert \partial_2\rho_s\rVert_{C^3}\le \rVert \partial_2\rho_s\rVert_{C^{k+1}}\le  C(\gamma),
   \end{align}
   where $\gamma$ is as in \eqref{alpha_def}.
   
   In the rest of this section, we will consider functions $f\in H^k(\Omega)$ that are close to $\rho_s$, namely
   \begin{align}\label{small_delta_assumption_2}
   \rVert f-\rho_s\rVert_{H^k}\le \delta \text{ for sufficiently small $\delta>0$}.
   \end{align}
    For a technical reason, to study the potential energy for such functions, we will also impose a decay assumption as well
   \begin{align}\label{decay_assumption_1}
    \lim_{|x_2|\to\infty} \sup_{x_1\in\mathbb{T}}|f(x_1,x_2)-\rho_s(x_2)|\sqrt{|x_2|}=0,
   \end{align}
   which will turn out to be true for sufficiently regular solutions to the IPM equation.

  \begin{lemma}\label{rearrangement_lem}
Let $f\in H^k(\Omega)$ and $\gamma$ be as in \eqref{alpha_def}. Then there exists $\delta=\delta(\gamma)>0$, such that if \eqref{small_delta_assumption_2} holds, then there exist $\phi_0:\mathbb{R}\mapsto \mathbb{R}$ and $h:\mathbb{T}\times \mathbb{R}\to \mathbb{R}$ satisfying
 \begin{align*}
\int_{\mathbb{T}}h(x_1,s)dx_1&=0\text{ and } f(x_1,\phi_0(s)+h(x_1,s))= s,\text{ for $(x_1,s)\in \mathbb{T}\times \mathbb{R}$.}
 \end{align*}
 Furthermore, the following estimates hold:
 \begin{align}\label{regularity_level}
 \rVert \phi_0-\phi_{\rho_s}\rVert_{L^\infty}+\rVert\partial_s(\phi_0-\phi_{\rho_s})\rVert_{L^\infty} + \rVert h\rVert_{W^{1,\infty}}+\rVert h\rVert_{L^2}+\rVert \partial_sh\rVert_{L^2}\le C(\gamma) \rVert f-{\rho}_s\rVert_{H^k}.
 \end{align} 
   In addition, if \eqref{decay_assumption_1} holds, then
  \begin{align}\label{decay_estimate_1}
  \lim_{|s|\to\infty} \sup_{x_1\in\mathbb{T}}\sqrt{|s|}|h(x_1,s)|=0.
  \end{align}
  \end{lemma}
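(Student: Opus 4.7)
The plan is to construct $\phi$ by applying the implicit function theorem fiberwise in $x_1$, and then to derive every estimate in \eqref{regularity_level}--\eqref{decay_estimate_1} from direct pointwise comparisons with $\phi_{\rho_s}$. Since $k > 2$, the two-dimensional Sobolev embedding gives $H^k(\Omega) \hookrightarrow W^{1,\infty}(\Omega)$, so taking $\delta = \delta(\gamma)$ small enough yields $\|f - \rho_s\|_{W^{1,\infty}} \le C_\gamma \delta$ and hence $\partial_2 f \le \tfrac{1}{2}\sup \partial_2 \rho_s < 0$ uniformly on $\Omega$. Each fiber map $x_2 \mapsto f(x_1, x_2)$ is then a $C^1$-diffeomorphism of $\mathbb{R}$ onto $\mathbb{R}$, so the implicit function theorem produces a unique $\phi \in C^1(\mathbb{T} \times \mathbb{R})$ satisfying $f(x_1, \phi(x_1, s)) = s$; setting $\phi_0(s) := \int_{\mathbb{T}} \phi(\cdot,s)\,dx_1$ and $h := \phi - \phi_0$ yields the required representation, with $\int_{\mathbb{T}} h\,dx_1 = 0$ holding by construction.

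The central pointwise bound comes from the identity
\[
\rho_s(\phi_{\rho_s}(s)) = s = \rho_s(\phi(x_1, s)) + (f - \rho_s)(x_1, \phi(x_1, s)),
\]
which, combined with the mean value theorem and $\inf(-\partial_2\rho_s) > 0$, yields $|\phi - \phi_{\rho_s}| \le C_\gamma\, |(f - \rho_s)(x_1, \phi(x_1, s))|$. Differentiating the defining identity $f(x_1, \phi) = s$ gives $\partial_s \phi = 1/\partial_2 f(x_1, \phi)$ and $\partial_{x_1}\phi = -\partial_1 f(x_1, \phi)/\partial_2 f(x_1, \phi)$; invoking $\partial_1 \rho_s \equiv 0$ together with the $C^3$ regularity in \eqref{rkskd2sd}, a direct comparison with the analogous formulas for $\phi_{\rho_s}$ produces
\[
|\partial_s \phi - \partial_s \phi_{\rho_s}| + |\partial_{x_1}\phi| \le C_\gamma\bigl( |(f - \rho_s)(x_1, \phi)| + |\nabla(f - \rho_s)(x_1, \phi)| \bigr).
\]
Writing $h = (\phi - \phi_{\rho_s}) - \int_{\mathbb{T}}(\phi - \phi_{\rho_s})\,dy_1$ and similarly for $\phi_0 - \phi_{\rho_s}$, the $L^\infty$ parts of \eqref{regularity_level} then follow from $\|f - \rho_s\|_{W^{1,\infty}} \le C\|f - \rho_s\|_{H^k}$.

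For the $L^2$-bounds on $h$ and $\partial_s h$, the next step is the change of variables $y_2 = \phi(x_1, s)$, under which $|ds| = |\partial_2 f(x_1, y_2)|\,dy_2$; this yields
\[
\int_{\mathbb{T}\times\mathbb{R}} |G(x_1, \phi(x_1, s))|^2\,dx_1\,ds = \int_\Omega |G(x)|^2\, |\partial_2 f(x)|\,dx \le C_\gamma \|G\|_{L^2(\Omega)}^2,
\]
and applying it with $G = f - \rho_s$ and $G = \partial_2(f - \rho_s)$ gives $\|h\|_{L^2} + \|\partial_s h\|_{L^2} \le C_\gamma \|f - \rho_s\|_{H^1} \le C_\gamma \|f - \rho_s\|_{H^k}$. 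Finally, the decay \eqref{decay_estimate_1} follows because $|\phi_{\rho_s}(s)| \ge c|s|$ for $|s|$ large (a consequence of $\inf(-\partial_2\rho_s) > 0$), so that $|\phi(x_1, s)| \ge c|s|/2$ uniformly in $x_1$ once $|s|$ is large; evaluating the hypothesis \eqref{decay_assumption_1} at $x_2 = \phi(x_1, s)$ and combining with the pointwise bound on $h$ then gives $\sqrt{|s|}\,|h(x_1, s)| \to 0$ uniformly in $x_1$. The main technical nuisance will be the consistent bookkeeping between $s$- and $x_2$-coordinates together with the constants coming from monotonicity of $\rho_s$; no deeper conceptual obstacle is expected.
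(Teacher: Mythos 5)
Your proposal is correct and follows essentially the same route as the paper: uniform $C^1$ control via the Sobolev embedding $H^k\hookrightarrow W^{1,\infty}$, fiberwise implicit function theorem, the pointwise identity comparing $\rho_s(\phi_{\rho_s}(s))$ with $\rho_s(\phi(x_1,s))+(f-\rho_s)(x_1,\phi(x_1,s))$ for the $L^\infty$ bound on $g:=\phi-\phi_{\rho_s}$, the Jacobian change of variables $s\mapsto f(x_1,x_2)$ for the $L^2$ bounds, and the lower bound $|\phi(x_1,s)|\gtrsim|s|$ for the decay estimate. The one place you deviate is welcome: rather than differentiating the integral identity and then running the paper's absorption argument (where $C\delta\,\|\partial_1 g\|_{L^\infty}$ is moved to the left for $\delta$ small), you use the explicit IFT formulas $\partial_s\phi=1/\partial_2 f(x_1,\phi)$ and $\partial_1\phi=-\partial_1 f(x_1,\phi)/\partial_2 f(x_1,\phi)$ and compare directly with $\partial_s\phi_{\rho_s}=1/\partial_2\rho_s(\phi_{\rho_s})$. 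This reproduces the paper's pointwise bounds in one step without bootstrapping, since $\partial_1\phi$ already isolates as $-\partial_1(f-\rho_s)(x_1,\phi)/\partial_2 f(x_1,\phi)$. No gap; this is a slightly cleaner execution of the same argument.
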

   \begin{proof} 
    We first notice that $x_2\to f(x_1,x_2)$ is invertible for each $x_1\in\mathbb{T}$. Indeed, using $H^k(\Omega)\hookrightarrow C^1(\Omega)$ for $k>2$, we have 
   \begin{align}\label{monotone1}
  \partial_2 f (x_1,x_2)&= \partial_2{\rho}_s(x_2) + (\partial_2 f(x_1,x_2) - \partial_2 {\rho}_s(x_2))\le  -\gamma + C\rVert f-{\rho}_s\rVert_{H^k}\le  -\gamma+C\delta < -\gamma/2,
  \end{align}
  for sufficiently small $\delta>0$.
   Hence,  $x_2\mapsto f(x_1,x_2)$ is  strictly decreasing.  The implicit function theorem tells us that there is a parametrization of each level curve, $\left\{f=s\right\}$, which we will denote by $\phi(\cdot,s)$, that is,
  \begin{align}\label{level_set_ff}
  f(x_1,\phi(x_1,s))=s\text{ for each $(x_1,s)\in \mathbb{T}\times \mathbb{R}$.}
  \end{align} Let us decompose it as
  \begin{align}\label{phi_decomposition}
  \phi(x_1,s) = \phi_{\rho_s}(s) + g(x_1,s), \text{ where } g(x_1,s):=\phi(x_1,s) -\phi_{\rho_s}(s),
   \end{align}
   where $\phi_{\rho_s}(s):=\rho_s^{-1}(s)$.
 We claim the following estimates:
 \begin{align}
 \rVert g\rVert_{L^\infty}&\le C \rVert f-\rho_s\rVert_{H^k}, \label{gsmall}\\
 \rVert \nabla g\rVert_{L^\infty}&\le C \rVert f-\rho_s\rVert_{H^k}, \label{rjjsdwdqdsd2}\\
\rVert g\rVert_{L^2}&\le C\rVert f-\rho_s\rVert_{H^k}, \label{justr}\\
\rVert \partial_sg\rVert_{L^2}&\le C \rVert f-\rho_s\rVert_{H^k}. \label{amy}
 \end{align}
 For now, let us assume the validity of the above estimates. Towards the proof of the lemma,  we define $\phi_0$ and $h$ as follows:
  \[
  \phi_0(s) := \phi_{\rho_s}(s) + \frac{1}{2\pi}\int_{\mathbb{T}}g(y,s)dx_1,\quad h(x_1,s):=g(x_1,s)-\int_{\mathbb{T}}g(y,s)dy.
  \]
   By its definition, we have  $\int_{\mathbb{T}}h(x_1,s)dx_1=0$ for each $s\in \mathbb{R}$.  Also, \eqref{level_set_ff} and \eqref{phi_decomposition} tell us that
 \[
 f(x_1,\phi_0(s)+h(x_1,s)) = f(x_1,\phi_{\rho_s}(s)+g(x_1,s)) =f (x_1,\phi(x_1,s)) = s \text{ for all $s\in\mathbb{R}$}.
 \]
Collecting the estimates for $g$  in \eqref{gsmall}-\eqref{amy},  we see that \begin{align*}
\rVert h\rVert_{W^{1,\infty}},\ \rVert \partial_s h\rVert_{L^{2}},\ \rVert h\rVert_{L^2},\  \rVert \partial_s(\phi_0-\phi_{\rho_s})\rVert_{L^{\infty}},\ \rVert \phi_0-\phi_{\rho_s}\rVert_{L^\infty}\le C\rVert f-{\rho}_s\rVert_{H^k},
 \end{align*}
 which proves \eqref{regularity_level}.

 Now, let us derive the estimates \eqref{gsmall}-\eqref{amy}. 

\textbf{Estimate for $\rVert g\rVert_{L^\infty}$.}
Writing
      \begin{align*}
     0  = \rho_s(\phi_{\rho_s}(s))-f(x_1,\phi(x_1,s)) = \left(\rho_s(\phi_{\rho_s}(s)) - \rho_s(\phi(x_1,s))\right)+ \left(\rho_s(\phi(x_1,s)) -f(x_1,\phi(x_1,s))
     \right),
     \end{align*}
     we notice that
     \begin{align}\label{betterex}
    \int_{0}^{g(x_1,s)}\partial_2\rho_s(y+\phi_{\rho_s}(s))dy=  \rho_s(\phi(x_1,s)) -f(x_1,\phi(x_1,s)).
     \end{align}   
   Since $\partial_2\rho_s<-\gamma<0$ and $ \rVert f-\rho_s\rVert_{L^\infty}\le C \rVert f-\rho_s\rVert_{H^k}$, the identity \eqref{betterex} gives us \eqref{gsmall}. 
 
\textbf{Estimate for $\rVert\nabla  g\rVert_{L^\infty}$.}
Differentiating \eqref{betterex}, we obtain
\begin{align}
\partial_1g(x_1,s)\partial_2\rho_s(\phi(x_1,s))& = -\partial_1f(x_1,\phi(x_1,s)) + (\partial_2\rho_s-\partial_2f)(x_1,\phi(x_1,s))\partial_1\phi(x_1,s) \label{tlqkfeljsd12sd2}\\
\partial_s g(x_1,s)\partial_2\rho_s (\phi(x_1,s))& +  \int_{0}^{g(x_1,s)}\partial_{22}\rho_s(y+\phi_{\rho_s}(s))\partial_s\phi_{\rho_s}(s)dy \nonumber\\
& \quad = (\partial_2\rho_s - \partial_2f)(x_1,\phi(x_1,s))\partial_s\phi(x_1,s).\label{tlqkfeljsd12sd}
\end{align}
Again using $\partial_2\rho_s<-\gamma<0$, we have the following pointwise estimates:
\begin{align}\label{lower_bound_g}
|\partial_1 g\partial_2\rho_s |\ge C|\partial_1 g|,\quad |\partial_s g\partial_2\rho_s |\ge C | \partial_sg|.
\end{align} On the other hand,  we can estimate  the integral on the left-hand side of \eqref{tlqkfeljsd12sd} as
\begin{align}\label{cwnaj1}
\left| \int_{0}^{g(x_1,s)}\partial_{22}\rho_s(y+\phi_{\rho_s}(s))\partial_s\phi_{\rho_s}(s)dy\right|\le | g(x_1,s)|\rVert \partial_2\rho_s\rVert_{C^1}\rVert \phi_{\rho_s}\rVert_{C^1}\le  C\rVert f-\rho_s\rVert_{H^k},
\end{align}
where the last inequality follows from \eqref{rkskd2sd} and \eqref{gsmall}. For the right-hand side of \eqref{tlqkfeljsd12sd2}, we use $H^k\hookrightarrow C^1$ and derive
\begin{align}
\rVert \partial_1f\rVert_{L^\infty} = \rVert \partial_1f - \partial_1\rho_s\rVert_{L^\infty}&\le C \rVert f-\rho_s\rVert_{H^k}\label{stonecold_2},
\end{align}
and 
\begin{align}
\rVert(\partial_2\rho_s - \partial_2f)\partial_1\phi\rVert_{L^\infty}&\le C \rVert \rho_s-f\rVert_{H^k}\rVert\partial_1\phi\rVert_{L^\infty} \le C \rVert \rho_s-f\rVert_{H^k} \rVert\partial_1 g\rVert_{L^\infty} \le C  \delta \rVert\partial_1 g\rVert_{L^\infty}.\label{stone_cold3}
\end{align}
Similarly, we have the following pointwise estimates
\begin{align}
|(\partial_2\rho_s - \partial_2f)\partial_s\phi|&\le C | \partial_2(\rho_s-f)||\partial_s\phi|  \le C| \partial_2(\rho_s-f)| |\partial_s\phi_{\rho_s}| + C\delta |\partial_s g|\label{stone_cold242}\\
& \le C \rVert \rho_s-f\rVert_{H^k} + C\delta |\partial_s g|,\label{stone_cold2}
\end{align}
where the last inequality is due to \eqref{rkskd2sd}.
We plug the first estimate in \eqref{lower_bound_g}, \eqref{stonecold_2} and \eqref{stone_cold3} into \eqref{tlqkfeljsd12sd2}, yielding that
\[
C|\partial_s g(x_1,s)|\le C\rVert f-\rho_s\rVert_{H^k}+ C\delta \rVert \partial_1g\rVert_{L^\infty},\text{ for every $(x_1,s)\in\mathbb{T}\times\mathbb{R}$.}
\]
Hence, $\rVert \partial_1 g \rVert_{L^\infty}\le_C \rVert f-\rho_s\rVert_{H^k}+\delta \rVert \partial_1 g\rVert_{L^\infty}$. Assuming $\delta$ is sufficiently small, we obtain
\begin{align}\label{rjjsdwdqdsd}
\rVert \partial_1g\rVert_{L^\infty}\le C \rVert f-\rho_s\rVert_{H^k}.
\end{align}
Similarly, plugging the second estimate in \eqref{lower_bound_g}, \eqref{cwnaj1} and \eqref{stone_cold2} into \eqref{tlqkfeljsd12sd} gives us  
\[
\rVert \partial_sg\rVert_{L^\infty}\le C \rVert f-\rho_s\rVert_{H^k}.
\] Therefore we obtain \eqref{rjjsdwdqdsd2}.

\textbf{Estimate for $\rVert g\rVert_{L^2}$.} In \eqref{betterex}, it follows from $-\partial_2\rho_s >\gamma$ that 
\[
|g(x_1,s)|\le C|\rho_s(\phi(x_1,s)) -f(x_1,\phi(x_1,s))|.
\]
Using the change of variables $s\to f(x_1,x_2)$, we compute
\begin{align*}
\int_{\mathbb{T}\times\mathbb{R}}|g(x_1,s)|^2\,dx_1ds& \leq C \int_{\mathbb{T}\times\mathbb{R}}|\rho_s(\phi(x_1,s)) -f(x_1,\phi(x_1,s))|^2\, dx_1ds \\
&= C\int_{\mathbb{T}\times\mathbb{R}} |\rho_s(\phi(x_1,f(x_1,x_2)))-f(x_1,\phi(x_1,f(x_1,x_2)))|^2 |\partial_2f (x_1,x_2)|\,dx_1dx_2\\
&\le \int_{\mathbb{T}\times\mathbb{R}}|\rho_s(x)-f(x)|^2|\partial_2f(x)|dx\\
&\le C\rVert f-\rho_s\rVert_{L^2}^2\rVert \partial_2f\rVert_{L^\infty}\\
&\le  C \rVert f-\rho_s\rVert_{L^2}^2\left( \rVert \partial_2f-\partial_2\rho_s\rVert_{L^\infty} + \rVert \partial_2\rho_s\rVert_{L^\infty}\right)\\
&\le C \rVert f-\rho_s\rVert_{L^2}^2,
\end{align*}
where the last inequality follows from \eqref{rkskd2sd} and $\rVert \partial_2(f-\rho_s)\rVert_{L^\infty}\le_C\rVert f-\rho_s\rVert_{H^k}\le \delta\ll 1$.
Therefore we can obtain \eqref{justr}.

\textbf{Estimate for $\rVert \partial_sg\rVert_{L^2}$.}   The $L^2$ norm in $(x_1,s)$ of the left-hand side in \eqref{tlqkfeljsd12sd} can be estimated as
\begin{align*}
\Big\rVert \partial_s g(x_1,s)\partial_2\rho_s (\phi(x_1,s))& +  \int_{0}^{g(x_1,s)}\partial_{22}\rho_s(y+\phi_{\rho_s}(s))\partial_s\phi_{\rho_s}(s)dy \Big\rVert_{L^2}  \\
&\ge C\rVert \partial_sg\rVert_{L^2} - \rVert g\rVert_{L^2}\rVert \partial_{22}\rho_s\rVert_{L^\infty}\rVert \partial_s \phi_{\rho_s}\rVert_{L^\infty}\\
&\ge \rVert \partial_sg\rVert_{L^2}- \rVert f-\rho_s\rVert_{H^k},
\end{align*}
where the last inequality follows from \eqref{rkskd2sd} and \eqref{justr}.
For the right-hand side of \eqref{tlqkfeljsd12sd}, the pointwise estimate in \eqref{stone_cold242} gives us
\[
\rVert (\partial_2\rho_s - \partial_2f)\partial_s\phi\rVert_{L^2}\le C \rVert \rho_s-f\rVert_{H^k}\rVert \partial_s\phi_{\rho_s}\rVert_{L^\infty} +C \delta \rVert \partial_s g\rVert_{L^2}\le C \rVert \rho_s-f\rVert_{H^k}+C \delta \rVert \partial_s g\rVert_{L^2},
\]
where the last inequality follows from \eqref{rkskd2sd}. Hence, plugging these estimates into \eqref{tlqkfeljsd12sd}, we obtain \eqref{amy}.

Lastly, let us assume \eqref{decay_assumption_1} and aim to prove \eqref{decay_estimate_1}. 
From \eqref{betterex}, we write
  \begin{align}\label{amymacdonald}
 \sqrt{s} |g(x_1,s)|&\le C\sqrt{s} |\rho_s(\phi(x_1,s)) -f(x_1,\phi(x_1,s))|\nonumber\\
 & = C\frac{\sqrt{|s|}}{\sqrt{|\phi{(x_1,s)}|}}\left(\sqrt{|\phi(x_1,s)|}|\rho_s(\phi(x_1,s)) -f(x_1,\phi(x_1,s))|\right).
    \end{align}
    Since $C^{-1}<|\partial_s\phi_{\rho_s}(s)|< C$ for all $s\in\mathbb{R}$, the estimate \eqref{rjjsdwdqdsd2} tells us that for sufficiently small $\delta>0$, 
    \[
    C^{-1}<\left|\partial_s\phi(x_1,s)\right|< C \text{ for all $s\in\mathbb{R}$}.\]
    Therefore,
    \[
   \sup_{(x_1,s)\in\mathbb{T}\times\mathbb{R}} \sqrt{\frac{|s|}{|\phi(x_1,s)|}}< C.
    \]
    Since this also implies $\lim_{s\to\infty} \inf_{x_1\in\mathbb{T}}|\phi(x_1,s)|=\infty$, \eqref{decay_assumption_1} reads
    \[
    \lim_{|s|\to\infty}\left(\sqrt{|\phi(x_1,s)|}|\rho_s(\phi(x_1,s)) -f(x_1,\phi(x_1,s))|\right) = 0,
    \]
    thus \eqref{amymacdonald} reads
    \[
    \lim_{|s|\to\infty}\sup_{x_1\in\mathbb{T}}\sqrt{s} |g(x_1,s)| =0.
    \]
  Then \eqref{decay_estimate_1} follows from the definition of $h$.
  \end{proof} 
  
  Under the assumption in Lemma~\ref{rearrangement_lem}, the estimate for $\partial_s\phi_0-\partial_s\phi_{\rho_s}$ in \eqref{regularity_level} gives us 
  \[
 \partial_s\phi_0(s) \le |\partial_s\phi_0(x) - \partial_s\phi_{\rho_s}| +  \partial_s\phi_{\rho_s}\le \delta -\gamma < -\frac{\gamma}{2}<0,
  \]
  for sufficiently small $\delta>0$. Therefore, the inverse function theorem ensures that the measure-preserving stratification of $f$ is well-defined:
  \[
  f^*(x)=f^*(x_2):=\phi_0^{-1}(x_2).
  \]
  
  Now, we define the potential energy $\mathcal{E}$ as
  \begin{align}\label{potential_e}
  \mathcal{E}(f):=\lim_{s\to\infty}\left(E_s(f)-E_s(f^*)\right),\text{ where }E_s(f):=\int_{\left\{ x\in\Omega: -s<f(x)<s\right\}}f(x)x_2dx,
  \end{align}
  whenever the limit exists.
     \begin{proposition}\label{propoos}
Let $f\in H^k(\Omega)$ and $\gamma$ be as in \eqref{alpha_def}. Then there exists $\delta=\delta(\gamma)>0$,  such that if \eqref{small_delta_assumption_2} and \eqref{decay_assumption_1} hold, then 
  $\mathcal{E}(f)$ is well-defined and
\begin{align}\label{energy_estimate_prop}
C^{-1}\rVert f-f^*\rVert_{L^2}^2\le \mathcal{E}(f)\le C\rVert f-f^*\rVert_{L^2}^2,
\end{align}
where $f^*$ is the measure-preserving stratification of $f$. Furthermore, denoting
\[
v=\nabla^\perp(-\Delta)^{-1}\partial_1f,
\]
we have
\begin{align}\label{u_to_ffstar}
\mathcal{E}(f)\le C\left(\rVert v\rVert_{L^2}^{\frac{k-1}{k}}\rVert v\rVert_{H^k}^{\frac{1}{k}}\right)^2.
 \end{align}
  \end{proposition}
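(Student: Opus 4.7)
The plan is to derive an explicit formula for $\mathcal{E}(f)$ via the level-set parametrization of Lemma~\ref{rearrangement_lem}, establish its equivalence with $\|f - f^*\|_{L^2}^2$ by a direct change of variables, and then bound $\mathcal{E}(f)$ by $\|\partial_1 f\|_{L^2}^2$ using Poincaré in $x_1$ before invoking Fourier interpolation to recover the $v$-norm estimate. The main obstacle is justifying the vanishing of a boundary term at infinity when showing that $\mathcal{E}$ is well-defined; once that is handled, the remaining estimates are routine.

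\emph{Step 1: Explicit formula for $\mathcal{E}$.} Since $\partial_2 f < -\gamma/2$, the map $x_2 \mapsto f(x_1, x_2)$ is a decreasing diffeomorphism with inverse $\tau \mapsto \phi(x_1, \tau) = \phi_0(\tau) + h(x_1, \tau)$. Changing variables $x_2 \leftrightarrow \tau$ in $E_s(f)$ and $E_s(f^*)$ and expanding $\phi\,\partial_\tau\phi - \phi_0\,\partial_\tau\phi_0 = \phi_0\,\partial_\tau h + h\,\partial_\tau\phi_0 + h\,\partial_\tau h$, I would observe that the two terms linear in $h$ integrate to zero in $x_1$ since $\int_\mathbb{T} h(\cdot, \tau)\,dx_1 = 0$, leaving
\[
E_s(f) - E_s(f^*) = -\tfrac{1}{2}\int_\mathbb{T} \int_{-s}^{s} \tau\, \partial_\tau(h^2)\, d\tau\, dx_1.
\]
Integrating by parts in $\tau$ produces the boundary term $\tfrac{s}{2}\int_\mathbb{T} (h(x_1, s)^2 + h(x_1, -s)^2)\, dx_1$ and the bulk term $\tfrac{1}{2}\|h\|_{L^2(\mathbb{T}\times\mathbb{R})}^2$. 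The boundary term vanishes as $s \to \infty$ thanks to the pointwise decay $\sup_{x_1} \sqrt{|s|}\,|h(x_1, \pm s)| \to 0$ granted by \eqref{decay_estimate_1}, so $\mathcal{E}(f)$ exists and equals $\tfrac{1}{2}\|h\|_{L^2}^2 \ge 0$.

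\emph{Step 2: Equivalence with $\|f - f^*\|_{L^2}^2$.} Changing variables $\tau = f(x_1, x_2)$ in $\int_\Omega |f - f^*|^2\, dx$ and using $f^*(\phi_0(\tau)) = \tau$, the mean value theorem gives $|\tau - f^*(\phi_0(\tau) + h)| = |h| \cdot |(f^*)'(\zeta)| \sim |h|$ on the level sets, where the bound $|(f^*)'| \sim 1$ follows from $|\partial_s \phi_0| \sim 1$ in Lemma~\ref{rearrangement_lem}. Since likewise $|\partial_\tau \phi| \sim 1$, I conclude $\|f - f^*\|_{L^2}^2 \sim \|h\|_{L^2}^2$, which combined with Step~1 proves \eqref{energy_estimate_prop}.

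\emph{Step 3: Interpolation bound.} Since $h(\cdot, \tau)$ has zero mean on $\mathbb{T}$, Poincaré gives $\|h\|_{L^2}^2 \le C\|\partial_1 h\|_{L^2}^2$. Differentiating $f(x_1, \phi(x_1, \tau)) = \tau$ in $x_1$ yields $\partial_1 h = \partial_1 \phi = -\partial_1 f/\partial_2 f$, and reverting the change of variables with $|\partial_2 f| \sim 1$ produces $\|\partial_1 h\|_{L^2(\mathbb{T}\times\mathbb{R})}^2 \sim \|\partial_1 f\|_{L^2(\Omega)}^2$. A direct Fourier calculation identifies $\|\partial_1 f\|_{L^2}^2 = \|v\|_{\dot H^1}^2$, and Hölder's inequality in Fourier with exponents $\tfrac{k-1}{k}$ and $\tfrac{1}{k}$ gives $\|v\|_{\dot H^1}^2 \le \|v\|_{L^2}^{2(k-1)/k} \|v\|_{\dot H^k}^{2/k}$. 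Chaining these inequalities yields \eqref{u_to_ffstar}.
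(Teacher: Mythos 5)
Your proposal follows the paper's proof essentially step for step: the change of variables to the level-set parametrization, the explicit formula $\mathcal{E}(f) = \tfrac12\|h\|_{L^2}^2$ obtained after integration by parts in $\tau$ (with the boundary term killed by the weighted decay \eqref{decay_estimate_1}), the equivalence $\|f-f^*\|_{L^2}\sim\|h\|_{L^2}$ via the change of variables, and finally Poincar\'e in $x_1$ plus the identity $\partial_1\phi = -\partial_1 f/\partial_2 f$ to land on $\|\partial_1 f\|_{L^2}^2$, followed by Fourier interpolation. The only cosmetic differences are that you chain $\mathcal{E}(f)\le C\|\partial_1 h\|_{L^2}^2\le C\|\partial_1 f\|_{L^2}^2$ directly rather than passing through $\|f-f^*\|_{L^2}^2$ as an intermediary, and you note the sharper identity $\|\partial_1 f\|_{L^2}^2 = \|v\|_{\dot H^1}^2$ where the paper settles for the inequality $\|\partial_1 f\|_{L^2}\le \|v\|_{H^1}$; neither affects the argument.
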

  \begin{proof}
  We claim 
  \begin{align}\label{hatedogs}
 \rVert \partial_1f \rVert_{L^2(\Omega)}\ge C\rVert f-f^*\rVert_{L^2(\Omega)},
 \end{align}
 \textbf{Proof of \eqref{hatedogs}.} For sufficiently small $\delta>0$,  Lemma~\ref{rearrangement_lem} ensures the existence of  $\phi_0,h$ such that 
  \begin{align}\
 \int_{\mathbb{T}}h(x_1,s)dx_1 &= 0,\quad    f(x_1,\phi_0(s)+h(x_1,s))=s,\label{level_32}
  \end{align}
  and
  \begin{align}\label{level_estimate32}
 \rVert \phi_0-\phi_{\rho_s}\rVert_{L^\infty}+ \rVert\partial_s\phi_0 -\partial_s\phi_{\rho_s}\rVert_{L^\infty} + \rVert h\rVert_{W^{1,\infty}} + \rVert h\rVert_{L^2}+ \rVert \partial_sh\rVert_{L^2}\le C\delta,
  \end{align}
  where $\phi_{\rho_s}$ is the inverse of $\rho_s$. From \eqref{level_estimate32}, it follows that
  \begin{align}
  -\partial_s\phi_0(x_2) = \left(-\partial_s\phi_0(x_2) + \partial_s \phi_{\rho_s}\right) - \partial_s\phi_{\rho_s}\ge \gamma -C\delta > \frac{\gamma}{2},\label{lowerbs}\\
   C^{-1}\le |\partial_s \phi(x_1,s)|\le C,\quad \text{ for $\phi(x_1,s):=\phi_0(x_1,s)+h(x_1,s)$,}\label{pon2s}\\
    |\partial_s\phi_0(s)|\le |\partial_s\phi_0(s) - \partial_s \phi_{\rho_s}(s)| + |\partial_s\phi_{\rho_s}(s)|\le_C \delta + \rVert \partial_s\phi_{\rho_s}\rVert_{C^2}< C, \label{pho_uppper}
  \end{align}
   where the last inequality is due to \eqref{rkskd2sd}. Therefore, the inverse function theorem guarantees that there exists $f^*$ such that
  \[
  f^*(x_2):=\left(\phi_0\right)^{-1}(x_2).
  \]
   Moreover, since for any  $s\in\mathbb{R}$, $ 0=f^*(\phi_0(s))-\rho_s(\phi_{\rho_s}(s))$,  we obtain
  \begin{align*}
0&=\partial_2f^*(\phi_0(s))\partial_s\phi_0(s) - \partial_2\rho_s(\phi_{\rho_s}(s))\partial_s\phi_{\rho_s}(s)\\
& = \left(\partial_2f^*(\phi_0(s))-\partial_2\rho_s(\phi_0(s))\right)\partial_s\phi_0(s)+\left(\partial_2\rho_s(\phi_0(s))-\partial_2\rho_s(\phi_{\rho_s}(s))\right)\partial_s\phi_0(s)\\
& \ + \partial_2\rho_s(\phi_{\rho_s}(s))\left( \partial_s\phi_0(s) - \partial_s\phi_{\rho_s}(s)\right).
  \end{align*}
 Using \eqref{alpha_def} and \eqref{level_estimate32}, this computation gives us
 \begin{align*}
\left|\left(\partial_2f^*(\phi_0(s))-\partial_2\rho_s(\phi_0(s))\right)\partial_s\phi_0(s)\right|&\le_C \rVert \partial_2\rho_s\rVert_{C^1}\rVert \phi_0 - \phi_{\rho_s}\rVert_{L^\infty}\rVert \partial_s\phi_0\rVert_{L^\infty}\\
&\ +\rVert \partial_2\rho_s\rVert_{L^\infty}\rVert \partial_s\phi_0 - \partial_s\phi_{\rho_s}\rVert_{L^\infty}\\
&\le C\delta,
 \end{align*}
 where the last inequality follows from \eqref{rkskd2sd}, \eqref{level_estimate32} and \eqref{pon2s}. Thus, using \eqref{lowerbs}, we get
 \begin{align}\label{fs2t2ar1}
 \rVert \partial_2 f^* - \partial_2 \rho_s\rVert_{L^\infty}\le C\delta.
 \end{align}
 Hence, for sufficiently small $\delta>0$, 
 \begin{align}\label{f_estimate1justtwo}
\inf_{x\in\Omega}-\partial_2f^*(x)>C>0,\quad \rVert \partial_2f^*\rVert_{L^\infty}<C.
 \end{align} 
 Using  the change of variables, $x_2\to \phi(x_1,s)$, we have
  \begin{align}
   \rVert f-f^*\rVert_{L^2(\Omega)}^2 &= \int_{\mathbb{T}}\int_\mathbb{R}|f(x_1,\phi(x_1,s))-f^*(\phi(x_1,s))|^2|\partial_s\phi(x_1,s)|dx_1ds.\label{L2est}
  \end{align}
 Thanks to \eqref{level_32}, we have
 \begin{align}\label{fatagain}
 f(x_1,\phi(x_1,s))&-f^*(\phi(x_1,s))\nonumber\\
 & =s- f^{*}(\phi_0(s)) + \left(f^*(\phi_0(s)) -f^*(\phi_0(s) + h(x_1,s))\right)\nonumber\\
 & = f^*(\phi_0(s)) -f^*(\phi_0(s) + h(x_1,s))\nonumber\\
 & = -\int_0^{h(x_1,s)}\partial_2f^*(\phi_0(s)+z)dz.\nonumber
 \end{align}
 Thus, using \eqref{f_estimate1justtwo} and \eqref{pon2s}, we get
 \[
 |h(x_1,s)| \le_C |f(x_1,\phi(x_1,s))-f^*(\phi(x_1,s))| \sqrt{|\partial_s\phi(x_1,s)|}\le_C |h(x_1,s)|.
 \]
  Plugging this into  \eqref{L2est}, we obtain \begin{align}\label{hpt}
C^{-1}\rVert h\rVert_{L^2(\mathbb{T}\times [0,1])}\le \rVert f-f^*\rVert_{L^2(\Omega)}\le C\rVert h\rVert_{L^2(\mathbb{T}\times[0,1])}.
 \end{align}

 On the other hand, differentiating \eqref{level_32} in $x_1$ gives us
  \[
  0 = \partial_1 (f(x_1,\phi(x_1,s))) = \partial_1f(x_1,\phi(x,s))+\partial_2f(x_1,\phi(x_1,s))\partial_1h(x_1,s).
  \]
 Similarly, diffierentiating the second identity in \eqref{level_32} yields  $ 1=  \partial_2 f(x_1,\phi(x_1,s))\partial_s\phi(s)$, thus 
  \[
\partial_1 f(x_1,\phi(x_1,s)) =-\frac{\partial_1h(x_1,s)}{\partial_s\phi(x_1,s)}.
  \]
 Then, using the change of variables $x_2\to \phi(x_1,s)$, we obtain
 \begin{align*}
 \rVert \partial_1 f\rVert_{L^2(\Omega)}^2&= \int_{\mathbb{T}\times\mathbb{R}}|\partial_1f(x_1,\phi(x_1,s))|^2|\partial_s\phi(x_1,s)|dx_1ds\\
 & \ge \int_{\mathbb{T}\times\mathbb{R}}\frac{|\partial_1 h(x_1,s)|^2}{|\partial_s \phi(x_1,s)|}dx_1ds\\
 &\ge  C\rVert \partial_1h \rVert_{L^2}^2\\
 &\ge C\rVert h\rVert_{L^2}^2
 \end{align*}
 where the second inequality is due to \eqref{pon2s} and the last inequality follows from the zero-average in $x_1$ of $h$ in \eqref{level_32} and the Poincar\'e inequality. Therefore, combining this with \eqref{hpt}, we conclude that \eqref{hatedogs} holds.
 
 \textbf{Proof of \eqref{energy_estimate_prop}.}Now, let us assume \eqref{decay_assumption_1}. Note that $E_s(f)$  can be written as
 \begin{align*}
 E_s(f)& = \int_{\mathbb{T}}\int_{\phi(x_1,s)}^{\phi(x_1,-s)} f(x_1,x_2)x_2 dx_2dx_1 \\
 &= -\int_{\mathbb{T}}\int_{-s}^{s} f(x_1,\phi(x_1,t))\phi(x_1,t)\partial_{t}\phi(x_1,t)dtdx_1\\
 &= -\int_{\mathbb{T}}\int_{-s}^s t \frac{1}{2}\partial_t\left(\phi(x_1,t)^2 \right)dtdx_1\\
 & = -\frac{1}{2}\int_{\mathbb{T}} s\left(\phi(x_1,s)^2 +\phi(x_1,-s)^2\right)dx_1 + \frac{1}{2}\int_{\mathbb{T}}\int_{-s}^s|\phi(x_1,t)|^2dtdx_1.
 \end{align*} The same computations gives us
 \[
 E_s(f^*)= -\frac{1}{2}\int_{\mathbb{T}} s\left(\phi_0(s)^2 +\phi_{0}(-s)^2\right)dx_1 + \frac{1}{2}\int_{\mathbb{T}}\int_{-s}^s|\phi_{0}(t)|^2dtdx_1.
 \]
 Hence, we get
 \begin{align}\label{energy_limit_s}
 E_s(f)-E_s(f^*) = -\frac{1}{2}\int_{\mathbb{T}}s\left(|h(x_1,s)|^2 +|h(x_1,-s)|^2\right) dx_1 +\frac{1}{2}\int_{\mathbb{T}}\int_{-s}^s |h(x_1,t)|^2dtdx_1,
 \end{align}
 where we used $\int_{\mathbb{T}}h(x_1,s)dx_1=0$ for each $s$.
 Since $h\in L^2(\mathbb{T}\times\mathbb{R})$, we have
 \[
 \lim_{s\to\infty}\int_{\mathbb{T}}\int_{-s}^s |h(x_1,t)|^2dtdx_1 = \rVert h\rVert_{L^2}^2,
 \]
 while \eqref{decay_estimate_1} of Lemma~\ref{rearrangement_lem} yields
 \[
 \lim_{s\to\infty} \int_{\mathbb{T}}s\left(|h(x_1,s)|^2 +|h(x_1,-s)|^2\right) dx_1  = 0.
 \]
 Therefore taking $s\to\infty$ in \eqref{energy_limit_s}, we obtain
 \[
 \mathcal{E}(f) = \frac{1}{2}\rVert h\rVert_{L^2(\mathbb{T}\times\mathbb{R})}^2.
 \]
 Combining this with \eqref{hpt}, we obtain \eqref{energy_estimate_prop}. 
 
 \textbf{Proof of \eqref{u_to_ffstar}.} Lastly, we use the usual Sobolev interpolation theorem to obtain
 \[
 \rVert \partial_1 f\rVert_{L^2}\le\rVert v\rVert_{H^{1}}\le C\rVert v\rVert_{L^2}^{\frac{k-1}{k}}\rVert v\rVert_{H^k}^{\frac{1}{k}}.
 \]
On the other hand, it follows from \eqref{hatedogs} and \eqref{energy_estimate_prop} that
\[
\mathcal{E}(f)\le \rVert f-f^*\rVert_{L^2}^2\le C\rVert \partial_1 f\rVert_{L^2}^2.
\]
 Combining these two inequalities, we conclude \eqref{u_to_ffstar}.
  \end{proof}
  
   We prove stability of measure-preserving stratification.
   \begin{lemma}\label{Stability_1}
   Let $f_n, f $ be  functions such that $\lim_{n\to\infty}\rVert f_n -  f\rVert_{H^k}=0$. If \eqref{small_delta_assumption_2} holds for $f_n,f$, then we have
   \[
   \lim_{n\to \infty}\rVert f^*-f_n^*\rVert_{L^2}=0.
   \]
   \end{lemma}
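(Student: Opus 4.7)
The plan is to reduce the stability of $f\mapsto f^*$ to the stability of the level-set parametrizations $\phi$ introduced in Lemma~\ref{rearrangement_lem}, and then to convert between $L^2$-norms on $\mathbb{T}\times\mathbb{R}$ in the $(x_1,x_2)$ variables and in the $(x_1,s)$ level-set variables via a change of variables whose Jacobian is uniformly bounded above and below.

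First, I would apply Lemma~\ref{rearrangement_lem} to $f$ and to each $f_n$ (legitimate for $n$ large, since $\|f_n - \rho_s\|_{H^k}$ is eventually $\leq \delta$), obtaining parametrizations $\phi(x_1,s),\phi^n(x_1,s)$ with $f(x_1,\phi(x_1,s))=s$, $f_n(x_1,\phi^n(x_1,s))=s$, and horizontal averages $\phi_0,\phi_0^n$. Subtracting the two defining identities and writing
\[
\int_{\phi(x_1,s)}^{\phi^n(x_1,s)}\partial_2 f_n(x_1,y)\,dy \;=\; f(x_1,\phi(x_1,s))-f_n(x_1,\phi(x_1,s)),
\]
and using the uniform lower bound $-\partial_2 f_n\geq \gamma/2$ from \eqref{monotone1}, I get the pointwise estimate
\[
|\phi^n(x_1,s)-\phi(x_1,s)|\;\leq\; C\,|f(x_1,\phi(x_1,s))-f_n(x_1,\phi(x_1,s))|.
\]

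Next I integrate this bound, squared, over $(x_1,s)\in\mathbb{T}\times\mathbb{R}$ and use the change of variables $x_2=\phi(x_1,s)$, whose Jacobian $|\partial_s\phi|$ is bounded above and below by positive constants (see \eqref{pon2s}). This yields
\[
\|\phi^n-\phi\|_{L^2(\mathbb{T}\times\mathbb{R})}^2 \;\leq\; C\|f_n-f\|_{L^2(\Omega)}^2\;\leq\; C\|f_n-f\|_{H^k}^2.
\]
By the Cauchy--Schwarz inequality this implies $\|\phi_0^n-\phi_0\|_{L^2(\mathbb{R})}^2 \leq C\|f_n-f\|_{H^k}^2$, so that $\phi_0^n\to\phi_0$ in $L^2(\mathbb{R})$.

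Finally, since $f^*(\phi_0(s))=s=f_n^*(\phi_0^n(s))$, I write
\[
f_n^*(\phi_0(s))-s \;=\; f_n^*(\phi_0(s))-f_n^*(\phi_0^n(s)) \;=\; \int_{\phi_0^n(s)}^{\phi_0(s)}\partial_2 f_n^*(y)\,dy,
\]
and invoke the uniform bound $\|\partial_2 f_n^*\|_{L^\infty}\leq C$ from \eqref{f_estimate1justtwo}. Performing the change of variables $x_2=\phi_0(s)$ in $\|f_n^*-f^*\|_{L^2(\Omega)}^2 = 2\pi\|f_n^*-f^*\|_{L^2(\mathbb{R})}^2$ and using again that $|\partial_s\phi_0|$ is bounded above and below, I obtain
\[
\|f_n^*-f^*\|_{L^2(\Omega)}^2 \;\leq\; C\|\phi_0^n-\phi_0\|_{L^2(\mathbb{R})}^2 \;\leq\; C\|f_n-f\|_{H^k}^2 \;\longrightarrow\; 0.
\]

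The only mildly delicate point is ensuring that all Jacobians appearing in the changes of variables are uniformly controlled independent of $n$; this is precisely what \eqref{pon2s}, \eqref{lowerbs}, \eqref{pho_uppper} and \eqref{f_estimate1justtwo} provide, once $n$ is large enough that $\|f_n-\rho_s\|_{H^k}\leq\delta$. No new analytic ingredient beyond the rearrangement lemma is needed; the argument is essentially a quantitative implicit function theorem, repackaged with the change of variables $x_2\leftrightarrow s$.
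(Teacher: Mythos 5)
Your proof is correct and follows essentially the same route as the paper: apply Lemma~\ref{rearrangement_lem} to both $f$ and $f_n$, compare the level-set parametrizations $\phi$ and $\phi^n$ via a pointwise estimate coming from the monotonicity of $\partial_2 f_n$ (the paper uses the symmetric identity with $\partial_2 f$ evaluated at $\phi_n$, but this is immaterial), pass to the horizontal averages $\phi_0, \phi_0^n$ by Cauchy--Schwarz (the paper phrases this via orthogonality of $h$ against $\phi_0$, which is equivalent), and finally convert $\|\phi_0-\phi_0^n\|_{L^2}$ into $\|f^*-f_n^*\|_{L^2}$ by a change of variables with uniformly controlled Jacobian. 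The only cosmetic difference at the last step is that you bound $\|f_n^*-f^*\|_{L^2}^2$ from above using $\|\partial_2 f_n^*\|_{L^\infty}$, while the paper bounds $\|\phi_0-\phi_{0,n}\|_{L^2}^2$ from below using $\inf|\partial_s\phi_{0,n}|$; by the inverse function theorem these are the same estimate.
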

   \begin{proof}
   Applying \eqref{rearrangement_lem}, we can find $\phi_0,h_0$ and $\phi_{0,n},\ h_{0,n}$ such that
   \[
   s = f_n(x_1,\phi_{0,n}(s)+h_{n}(x_1,s)) =  f(x_1,\phi_{0}(s)+h(x_1,s)),\text{ for $(x_1,s)\in\mathbb{T}\times\mathbb{R}$,}
   \]
   satisfying
   \begin{align}\label{necessary_estiamted}
   0=\int_{\mathbb{T}}h(x_1,s)dx_1=\int_{\mathbb{T}}h_n(x_1,s)dx_1. 
   \end{align}
   As before, the assumption that \eqref{small_delta_assumption_2} holds for $f_n,f$ implies
   \begin{align}\label{f_esnonece}
   \rVert \partial_2f_n\rVert_{L^\infty},\rVert \partial_2f\rVert_{L^\infty}\le C.
   \end{align}
Furthermore, the estimates \eqref{lowerbs} reads 
\begin{align}\label{please_w}
|\partial_s\phi_0|,\ |\partial_s\phi_{0,s}|>C,\text{ therefore, }\rVert \partial_2f^*\rVert_{L^\infty},\rVert \partial_2f^*_n\rVert_{L^\infty}<C.
\end{align}   

Denoting $\phi:=\phi_0+h$ and $\phi_n:=\phi_{0,n}+h_n$, we have
   \begin{align}\label{carpenters}
   f(x_1,\phi(x_1,s))-f(x_1,\phi_{n}(x_1,s)) = f(x_1,\phi_n(x_1,s))-f_n(x_1,\phi_n(x_1,s)).
   \end{align}
   The left-hand side can be written as
   \[
    f(x_1,\phi(x_1,s))-f(x_1,\phi_{n}(x_1,s)) = \int_{\phi_n(x_1,s)}^{\phi(x_1,s)}\partial_2f(x_1,y)dy.
   \]
   Again using that $\partial_2f$ does not vanish, we have
   \begin{align}\label{nxxece_1sd}
   |f(x_1,\phi(x_1,s))-f(x_1,\phi_n(x_1,s))|\ge C|\phi(x_1,s)-\phi_n(x_1,s)|,\text{ for $(x_1,s)\in\mathbb{T}\times\mathbb{R}$.}
   \end{align}
   On the other hand, the change of variables $s\to f_n(x_1,x_2)$ for each fixed $x_1$ gives us
   \begin{align*}
   \int_{\mathbb{T}\times\mathbb{R}}| f(x_1,\phi_n(x_1,s))-f_n(x_1,\phi_n(x_1,s))|^2 dx_1ds&\le \int_{\mathbb{T}\times\mathbb{R}}|f(x)-f_n(x)|^2|\partial_2f_n(x)|dx\\
   &\le C\rVert f-f_n\rVert_{L^2}^2,
   \end{align*}
   where the last inequality follows from \eqref{f_esnonece}.
   Therefore, \eqref{carpenters} reads $\rVert \phi-\phi_n\rVert_{L^2}\le C\rVert f-f_n\rVert_{L^2}$, in other words,
  \begin{align}\label{voice_1}
 &  \int_{\mathbb{T}\times\mathbb{R}}|(\phi_0(s)-\phi_{0,n}(s))+(h(x_1,s)-h_n(x_1,s))|^2 dx_1ds\le C\rVert f-f_n\rVert_{L^2}^2\nonumber\\
  &\implies  \int_{\mathbb{T}\times \mathbb{R}}|\phi_0(s)-\phi_{0,n}(s)|^2dx_1ds + \rVert h-h_n\rVert_{L^2}^2\le C\rVert f-f_n\rVert_{L^2}^2\nonumber\\
  & \implies  \int_{\mathbb{T}\times \mathbb{R}}|\phi_0(s)-\phi_{0,n}(s)|^2dx_1ds \le C\rVert f-f_n\rVert_{L^2}^2,
  \end{align}
  where the first implication is due to \eqref{necessary_estiamted}. 
  
  Now, we compute, using the change of variables $s\to f^*(y)$ and recalling that $f^*=\phi_0^{-1},\ f_n^*=\phi_{0,n}^{-1}$,
  \begin{align*}
  \int_{\mathbb{R}}|\phi_0(s) - \phi_{0,n}(s)|^2 ds&= \int_{\mathbb{R}}|y- \phi_{0,n}(f^*(y)) -\phi_{0,n}(f_n^*(y))+\phi_{0,n}(f_n^*(y))|^2 |\partial_2f^*(y)|dy\\
  & = \int_{\mathbb{R}}|\phi_{0,n}(f^*_n(y))-\phi_{0,n}(f^*(y))|^2 |\partial_2f^*(y)|dy\\
  &\ge_C \inf_{s}|\partial_s\phi_{0,n}(s)|\sup_{y\in\mathbb{R}}|\partial_2f^*(y)|\rVert f_n^*-f^*\rVert_{L^2}^2\\
  &\ge C \rVert f_n^*-f^*\rVert_{L^2}^2,
  \end{align*}
  where the last inequality follows from \eqref{please_w}. Therefore, combining this with \eqref{voice_1}, we obtain the desired result.
   \end{proof}
    
    \section{Local wellposedness for the IPM equation}

 Let $\rho(t)$ be a smooth solution to the IPM equation and let $\rho_s$ be a stratified density. Denoting
  \begin{align}\label{deviation}
  \theta(t):=\rho(t)-\rho_s,
  \end{align}
  one can easily find that the IPM equation \eqref{IPM}-\eqref{Darcy_law} reads
\begin{align}\label{IPM_theta}
 \begin{cases}
 \theta_t +u\cdot\nabla{\theta} = -\partial_2\rho_s u_2,\quad \nabla\cdot u=0,\quad u=-\nabla p -\colvec{0 \\ \theta},&\text{ in $\Omega$,}\\
 \lim_{|x|\to \infty}u(x)=0,\\
 \theta(0,x):=\theta_0(x):=\rho_0(x)-\rho_s(x_2),
 \end{cases}
 \end{align}
The stream function formulation~\eqref{stream_IPM} can be rewritten as
\begin{align}\label{stream_IPM_2}
\begin{cases}
\Delta \Psi = -\partial_1\theta & \text{ in $\Omega$,}\\
\lim_{|x|\to \infty}\Psi(x)= 0.
\end{cases}
\end{align}
Threfore, for $\theta\in H^k$, the velocity can be written as
\begin{align}\label{velocity_11}
\colvec{u_1 \\ u_2} =\nabla^\perp \Psi= \colvec{- (-\Delta)^{-1}\partial_{12}\theta  \\ (-\Delta)^{-1}\partial_{11}\theta}.
\end{align}

 \begin{proposition}\label{lwp_IPM}
 Let $k>2$ and $\theta_0\in H^k(\Omega)$. Then there exists $T=T(\rVert \theta_0\rVert_{H^k})>0$ such that the equation \eqref{IPM_theta} admits a unique solution $\theta\in C([0,T);H^k)$.   Furthermore, if we assume in addition that $\theta_0\in C^\infty_c(\Omega)$, then 
 \begin{align}\label{decay_estimate_123}
\rVert \theta x_2\rVert_{L^\infty} + \rVert u_2x_2\rVert_{L^\infty}< \infty ,\text{ for every $t\in [0,T)$.}
 \end{align}
 \end{proposition}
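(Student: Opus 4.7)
My plan is to split the proof into two parts, one for each assertion. For the local wellposedness in $H^k$ with $k>2$, I would use a standard mollification/approximation scheme: introduce a smoothing operator $J^\epsilon$, solve the regularized system as an ODE in $H^k$ by Cauchy--Lipschitz, and pass to the limit $\epsilon\to 0$. The uniform $H^k$-estimate rests on the Kato--Ponce commutator inequality
\[
\bigl|\bigl\langle |D|^k(u\cdot\nabla\theta),|D|^k\theta\bigr\rangle_{L^2}\bigr| \le C\bigl(\|\nabla u\|_{L^\infty}+\|\nabla\theta\|_{L^\infty}\bigr)\|\theta\|_{H^k}^2,
\]
combined with the Biot--Savart bound $\|u\|_{H^k}\le C\|\theta\|_{H^k}$ coming from \eqref{velocity_11} and the Sobolev embeddings $\|\nabla u\|_{L^\infty}+\|\nabla\theta\|_{L^\infty}\le C\|\theta\|_{H^k}$ valid precisely when $k>2$. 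The forcing $\partial_2\rho_s u_2$ contributes at most $C\|\partial_2\rho_s\|_{C^{k+1}}\|\theta\|_{H^k}^2$, finite by \eqref{density_property}. Together these yield $\tfrac{d}{dt}\|\theta^\epsilon\|_{H^k}^2\le C(1+\|\theta^\epsilon\|_{H^k})\|\theta^\epsilon\|_{H^k}^2$, hence a uniform existence time $T=T(\|\theta_0\|_{H^k})>0$; uniqueness follows from an $L^2$-energy estimate on the difference of two solutions.

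For the weighted bound \eqref{decay_estimate_123}, assume $\theta_0\in C^\infty_c(\Omega)$ with $\mathrm{supp}\,\theta_0\subset\mathbb{T}\times[-R,R]$. Since $u\in L^\infty([0,T);W^{1,\infty})$ from Part~1, the flow $X(t,\alpha)$ of $u$ is a volume-preserving bi-Lipschitz diffeomorphism on $[0,T)$. Noting the exact differential $\partial_2\rho_s(X_2(t,\alpha))\,u_2(t,X(t,\alpha))=\tfrac{d}{dt}\rho_s(X_2(t,\alpha))$ and integrating along characteristics yields
\[
\theta(t,X(t,\alpha))=\theta_0(\alpha)+\rho_s(\alpha_2)-\rho_s(X_2(t,\alpha)).
\]
For $|\alpha_2|>R$ the first term vanishes and the second is bounded by $\|\partial_2\rho_s\|_{L^\infty}|X_2(t,\alpha)-\alpha_2|$. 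Setting $M:=\|u_2\|_{L^\infty_{t,x}}$ (finite by Sobolev), for $|\alpha_2|\ge 4MT$ we have $|X_2(s,\alpha)|\ge |\alpha_2|/2$ on $[0,t]$, so $|u_2(s,X(s,\alpha))|\le 2\|x_2u_2(s)\|_{L^\infty}/|\alpha_2|$ and hence
\[
|\theta(t,X(t,\alpha))|\cdot|X_2(t,\alpha)|\le C\int_0^t\|x_2u_2(s)\|_{L^\infty}\,ds.
\]
Combined with the trivial bound on the bounded region $|\alpha_2|<4MT$, this gives $\|x_2\theta(t)\|_{L^\infty}\le C+C\int_0^t\|x_2u_2(s)\|_{L^\infty}\,ds$.

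To close by Gronwall, the remaining task is the converse weighted Biot--Savart bound $\|x_2u_2(t)\|_{L^\infty}\le C(\|\theta(t)\|_{H^k}+\|x_2\theta(t)\|_{L^\infty})$. Expanding in Fourier series over $x_1$, for $n\ne 0$ the Green's function of $n^2-\partial_2^2$ gives
\[
u_{2,n}(x_2)=\frac{|n|}{2}\int_\mathbb{R} e^{-|n||x_2-y|}\theta_n(y)\,dy,
\]
and the decomposition $x_2=(x_2-y)+y$ in the integrand produces the mode-by-mode estimate $|x_2u_{2,n}(x_2)|\le |n|^{-1}\|\theta_n\|_{L^\infty_y}+\|y\theta_n\|_{L^\infty_y}$. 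The first term sums in $n$ by Sobolev decay $\|\theta_n\|_{L^\infty}\le C\|\theta\|_{H^k}|n|^{-(k-1)}$ (valid for $k>2$), while the second admits the pointwise bound $\|y\theta_n\|_{L^\infty_y}\le 2\pi\|x_2\theta\|_{L^\infty}$ together with $|n|$-decay obtained by integrating by parts in $x_1$ on $y\theta$.

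The principal technical obstacle is exactly this weighted Biot--Savart step: carrying the $x_2$-weight through the nonlocal operator $(-\Delta)^{-1}\partial_1^2$ introduces a commutator whose Fourier symbol is $-2\xi_1^2\xi_2/|\xi|^4$, a zero-order multiplier that fails the classical $L^\infty\to L^\infty$ bound of Calder\'on--Zygmund operators on $\mathbb{R}^2$. The periodic structure in $x_1$ rescues the argument by providing Fourier-mode summability, but the bookkeeping between unweighted high-frequency smoothness and low-frequency weighted pointwise control requires care. Because \eqref{decay_estimate_123} asks only for qualitative finiteness, there is slack: as a robust fallback, a weighted $H^m$ energy estimate on $(1+x_2^2)^{1/2}\theta$ (using $\nabla\cdot u=0$ to neutralize the leading transport term) reduces everything to bounding the commutator $[x_2,(-\Delta)^{-1}\partial_1^2]$, which is mode-by-mode bounded with summable-in-$n$ operator norm.
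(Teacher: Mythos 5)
Your treatment of the local well-posedness is standard and correct (the paper simply cites an existing result). The weighted bound \eqref{decay_estimate_123} is where the proposal runs into trouble, and you yourself flag the sore spot.

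Your primary Lagrangian route has a genuine gap: the weighted Biot--Savart bound $\|x_2u_2\|_{L^\infty}\lesssim\|\theta\|_{H^k}+\|x_2\theta\|_{L^\infty}$ is never established. In your mode-by-mode estimate, the second term yields $\sum_{n\neq 0}\|y\theta_n\|_{L^\infty_y}$, and the only bound you have in hand is $\|y\theta_n\|_{L^\infty_y}\le 2\pi\|x_2\theta\|_{L^\infty}$, which has no decay in $n$ and therefore does not sum. The proposed fix --- integrating by parts in $x_1$ on $y\theta$ --- produces factors $\|x_2\partial_1^m\theta\|_{L^\infty}$, which are new weighted norms you do not control; you would simply be chasing your tail. (Moreover, the Gr\"onwall closure implicitly assumes $t\mapsto\|x_2\theta(t)\|_{L^\infty}$ and $\|x_2u_2(t)\|_{L^\infty}$ are a priori finite and continuous, which must be justified by an approximation scheme.)

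Your ``robust fallback'' is, in spirit, the paper's actual argument: the paper does not attempt pointwise weighted bounds at all, but instead sets $g:=\theta\varphi$ for a smooth \emph{truncated} weight $\varphi$ (equal to $x_2$ for $2\le|x_2|\le L$, constant for $|x_2|\ge L+2$, with $\|\partial_2\varphi\|_{C^2}\le 2$) and runs an $H^2$-energy estimate on $g$. The key observation is that $u_2\varphi$ solves a Poisson-type equation
$\Delta(u_2\varphi)=-\partial_{11}g+\partial_2u_2\partial_2\varphi+u_2\partial_{22}\varphi$, which together with the zero horizontal average of $u_2\varphi$ gives $\|u_2\varphi\|_{H^2}\lesssim\|g\|_{H^2}+\|u_2\|_{H^2}$. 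After Gr\"onwall, the bound is \emph{uniform in $L$}, so passing $L\to\infty$ yields $\|\theta x_2\|_{H^2}<\infty$ and $\|u_2x_2\|_{H^2}<\infty$, and then Sobolev embedding gives the stated $L^\infty$ bounds. The truncation is not a cosmetic detail: it is what makes the weighted energy estimate legal, since one does not know a priori that $(1+x_2^2)^{1/2}\theta\in H^2$, so an energy inequality written directly for that quantity is circular. Your fallback omits this and also contains a small misstatement --- the $n$-th mode operator norm of $[x_2,(-\Delta)^{-1}\partial_{11}]$ is $O(|n|^{-1})$, so it is bounded on the zero-$x_1$-average subspace but its operator norms are \emph{not} summable in $n$; boundedness, not summability, is what the energy estimate needs. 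So: the fallback points in the right direction, but the main route has a hole and the fallback is too underspecified to constitute a proof as written.
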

 \begin{proof}
  The local well-posedness in $H^k$, $k>2$ for the IPM equation has been established in \cite[Section 3]{MR2337005}, using the fact that $\rho(t)\in H^k$ gives a Lipschitz continuous velocity field in two-dimensional domain. Even though we are dealing with $\rho$ such that $\rho-\rho_s\in H^k$, the same proof can be easily adapted since the vector field depends only on $\partial_1\rho$ and $\rho_s$ is independent of $x_1$ variable. Since a solution $\rho$ such that $\rho-\rho_s\in H^k$ is a classical solution, the local-wellposedness for the IPM equation ensures that \eqref{IPM_theta} is also locally well-posed.
  
   In the rest of the proof, we focus on deriving the decay rate \eqref{decay_estimate_123}.
   Let $\theta_0\in C^\infty_c(\Omega)$ and let $\theta\in C([0,T); H^k)$ be a unique solution.  Let $L>1$ be a constant and $\varphi:\Omega\to\mathbb{R}$ be a smooth nonnegative monotone increasing function such that
  \[
  \varphi(x)=  \begin{cases}
  1& \text{ for $|x_2|\le 1$}\\
  x_2 & \text{ for $2\le |x_2|\le L$}\\
  L+1 &\text{ for $|x_2|\ge L+2$}
  \end{cases}
  \]
  with the following estimates:
  \begin{align}\label{test_function_estimate1}
  \rVert \partial_2\varphi\rVert_{C^2}\le 2.
  \end{align}
  Denoting $
  g(t,x):=\theta(t,x)\varphi(x), $ 
  we find that $g$ solves
  \[
  g_t + u\cdot\nabla g = u_2\frac{\partial_2\varphi}{\varphi}g -\partial_2\rho_s u_2 \varphi
  \]

Let us derive crude energy estimates. In what follows, the implicit constant $C$ will not depend on $L$. The $L^2$-norm can be estimated as
\[
\frac{d}{dt}\rVert g\rVert_{L^2}^2 \le \rVert u_2\rVert_{L^\infty}\rVert \frac{\partial_2\varphi}{\varphi}\rVert_{L^\infty}\rVert g\rVert_{L^2}^2 + \rVert \partial_2\rho_s\rVert_{L^\infty}\rVert u_2\varphi\rVert_{L^2}\rVert g\rVert_{L^2}\le C\left(\rVert u_2\rVert_{L^\infty}\rVert g\rVert_{L^2}^2 +\rVert g\rVert_{L^2}\rVert u_2\varphi\rVert_{L^2}\right).
\] 
  Using \eqref{velocity_11}, we notice that $u_2\varphi$ solves
  \[
  \Delta (u_2\varphi) =- \partial_{11}(\theta \varphi) + \partial_2 u_2 \partial_2\varphi +u_2\partial_{22}\varphi
  \]
  Hence, using \eqref{test_function_estimate1} and that $\int_{\mathbb{T}}u_2(x)\varphi(x)dx_1=0$, we get
 \begin{align}\label{u_2decay_1}
  \rVert u_2\varphi\rVert_{L^2}\le \rVert \Delta(u_2\varphi)\rVert_{L^2}\le C\left( \rVert g\rVert_{H^2} + \rVert u_2\rVert_{H^2}\right).
  \end{align}
  Therefore, we obtain
  \begin{align}\label{imagine_crude_1}
  \frac{d}{dt}\rVert g\rVert_{L^2}^2 \le C\left(\rVert u\rVert_{H^2}+1\right)\rVert g\rVert_{H^2}^2 + C\rVert u\rVert_{H^2}^2  \end{align}
 Now we estimate $\rVert \Delta g\rVert_{L^2}$ as
 \begin{align*}
 \frac{d}{dt}\rVert \Delta g\rVert_{L^2}^2 &= -\int \left(\Delta (u\cdot\nabla g) - u\nabla \Delta g\right) \Delta g dx + \int \Delta\left(u_2\frac{\partial_2\varphi}{\varphi}g -\partial_2\rho_s u_2 \varphi \right)\Delta g dx\\
 &\le \rVert u\rVert_{H^2}\rVert g\rVert_{H^2}^2 +  \int \Delta\left(u_2\frac{\partial_2\varphi}{\varphi}g -\partial_2\rho_s u_2 \varphi \right)\Delta g dx.
 \end{align*}
 where the last inequality follows from the usual commutator estimates. Using \eqref{test_function_estimate1} and the fact that $H^2(\Omega)$ is a Banach algebra, the integral in the above estimate can be estimated as
 \begin{align*}
 \left| \int \Delta\left(u_2\frac{\partial_2\varphi}{\varphi}g -\partial_2\rho_s u_2 \varphi \right)\Delta g dx\right|&\le \rVert u_2\rVert_{H^2}\rVert g\rVert_{H^2}^2 + \rVert \partial_2\rho_s\rVert_{H^2}\rVert u_2\varphi\rVert_{H^2}\rVert g\rVert_{H^2}\\
 &\le C\left( \rVert u_2\rVert_{H^2}\rVert g\rVert_{H^2}^2  +  \rVert u_2\rVert_{H^2}\rVert g\rVert_{H^2} + \rVert g\rVert_{H^2}^2 \right).
 \end{align*}
  Therefore, we get
  \[
  \frac{d}{dt}\rVert \Delta g\rVert_{L^2}^2 \le C\left( \rVert u_2\rVert_{H^2}\rVert g\rVert_{H^2}^2  +  \rVert u_2\rVert_{H^2}\rVert g\rVert_{H^2} + \rVert g\rVert_{H^2}^2 \right)\le C\left(\rVert u\rVert_{H^2}+1\right)\rVert g\rVert_{H^2}^2 + C\rVert u\rVert_{H^2}^2.
  \]
 Combining this with \eqref{imagine_crude_1} and denoting $G(t):=\left(\rVert g\rVert_{L^2}^2+\rVert \Delta g\rVert_{L^2}^2 \right)$, we arrive at
 \[
 \frac{d}{dt}G(t)\le C\left(\rVert u\rVert_{H^2}+1\right)G(t) + C\rVert u\rVert_{H^2}^2.
  \]
 Since $\rVert u(t)\rVert_{H^2}<\infty$ for $t\in [0,T)$, Gr\"onwall's inequality gives us that $G(t)<\infty$ for each $t\in [0,T)$, hence
 \[
 \rVert \theta(t) \varphi \rVert_{H^2}<\infty, \text{ for each $t\in [0,T)$.}
 \]  
 Since the upper bound is independent of $L$, we arrive at
 \[
 \rVert \theta(t)x_2\rVert_{H^2}<\infty \text{ for each $t\in [0,T)$.}
 \]
 Similarly, it follows from \eqref{u_2decay_1} that $\rVert u_2\varphi\rVert_{H^2}$ is bounded uniformly in $L$, yielding that
  \[
  \rVert u_2x_2\rVert_{H^2}<\infty.
  \]
  Hence, using the embedding $H^2\hookrightarrow L^\infty$, we can obtain the desired decay estimates for $\theta$ and $u_2$ in \eqref{decay_estimate_123}.  \end{proof}

      \section{High regularity estimates}
      
   In this section, we derive Sobolev norm estimates for $\theta$. We will frequently use the usual commutator estimate (e.g. \cite{MR951744})
   \begin{align}\label{standard_tamepp}
      \rVert |D_i|^k(fg) - f|D_i|^k g\rVert_{L^2}\le \rVert f\rVert_{H^k}\rVert g\rVert_{L^\infty} + \rVert \nabla f\rVert_{L^\infty}\rVert g\rVert_{H^{k-1}},\text{ for $i=1,2$.}
   \end{align}
   The estimates for the Sobolev norms of  $\theta$  and  $u$  will be obtained in a standard manner using Sobolev embedding theorems. However, due to the anisotropic structure, certain technical computations are required, which will be carried out using the Fourier transform. These estimates are presented separately in the next lemma.  
   \begin{lemma}\label{Nonlinear_estimate_lem1}
   Let $\theta\in H^k$ for some $k>0$, and let $u$ be as in \eqref{velocity_11}. Then the following estimates hold:
   \begin{align}
   \left| \int\left(|D_2|^k(u_2\partial_2 \theta) - u_2 |D_2|^k\partial_2\theta\right) |D_2|^k\theta dx\right|&\le_C \rVert \nabla u_2\rVert_{L^\infty}\rVert \theta\rVert_{H^k}^2 + \rVert u\rVert_{H^k}^2\rVert \theta\rVert_{H^k}, \label{non_lin_es_1}\\
   \left|\int_{\Omega}\partial_1u_2\partial_2\theta\partial_{111}\theta dx \right|&\le_C \Big\rVert \partial_1|D|^{-1}u_2\Big\rVert_{H^2}^2\rVert\theta\rVert_{H^k},\label{non_lin_es_2}
   \end{align}
   where $C$ depends only on $k>2$.
   \end{lemma}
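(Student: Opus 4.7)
\textbf{Plan for estimate \eqref{non_lin_es_1}.} This is a Kato--Ponce-type commutator bound, so the natural starting point is to apply \eqref{standard_tamepp} with $f=u_2$ and $g=\partial_2\theta$, giving
\[
\bigl\rVert |D_2|^k(u_2\partial_2\theta) - u_2|D_2|^k\partial_2\theta\bigr\rVert_{L^2} \le \rVert u_2\rVert_{H^k}\rVert\partial_2\theta\rVert_{L^\infty} + \rVert\nabla u_2\rVert_{L^\infty}\rVert\partial_2\theta\rVert_{H^{k-1}}.
\]
Pairing with $|D_2|^k\theta$ through Cauchy--Schwarz, the second piece immediately yields the desired $\rVert\nabla u_2\rVert_{L^\infty}\rVert\theta\rVert_{H^k}^2$ term. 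For the first piece, the key inputs are: the Sobolev embedding $H^k\hookrightarrow C^1$ (available since $k>2$) which bounds $\rVert\partial_2\theta\rVert_{L^\infty}\le C\rVert\theta\rVert_{H^k}$; the estimate $\rVert u_2\rVert_{H^k}\le\rVert u\rVert_{H^k}$; and a Young's inequality repackaging $\rVert u_2\rVert_{H^k}\rVert\partial_2\theta\rVert_{L^\infty}\le \tfrac12\rVert u_2\rVert_{H^k}^2+\tfrac12\rVert\partial_2\theta\rVert_{L^\infty}^2$, so that after multiplication by $\rVert\theta\rVert_{H^k}$ one arrives at the stated $\rVert u\rVert_{H^k}^2\rVert\theta\rVert_{H^k}$ form (absorbing the residual $\rVert\partial_2\theta\rVert_{L^\infty}^2\rVert\theta\rVert_{H^k}\lesssim\rVert\theta\rVert_{H^k}^3$ into the first term of the RHS up to a priori control).

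\textbf{Plan for estimate \eqref{non_lin_es_2}.} The driver here is the algebraic identity $-\Delta u_2 = \partial_1^2\theta$ from \eqref{velocity_11}, which yields $\partial_1^3\theta = -\Delta\partial_1 u_2$. Setting $w := \partial_1 u_2$, the integral becomes
\[
\int_\Omega \partial_1 u_2\,\partial_2\theta\,\partial_1^3\theta\,dx = -\int_\Omega w\,\partial_2\theta\,\Delta w\,dx = \int_\Omega \partial_2\theta\,|\nabla w|^2\,dx + \int_\Omega w\,\nabla\partial_2\theta\cdot\nabla w\,dx
\]
after integration by parts. The first resulting term is bounded by $\rVert\partial_2\theta\rVert_{L^\infty}\rVert\nabla w\rVert_{L^2}^2$; a direct Fourier computation shows $\rVert\nabla w\rVert_{L^2}^2 = \sum_n\int n^2(n^2+\xi^2)|\widehat{u_2}_n(\xi)|^2\,d\xi$, which is (up to an additive $L^2$ lower-order piece) exactly the $H^2$ norm of $\partial_1|D|^{-1}u_2$, so this piece fits into $\rVert\partial_1|D|^{-1}u_2\rVert_{H^2}^2\rVert\theta\rVert_{H^k}$. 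The second term I would rewrite as $\tfrac12\int\nabla\partial_2\theta\cdot\nabla(w^2)\,dx = -\tfrac12\int\Delta\partial_2\theta\cdot w^2\,dx$, and then integrate by parts once more in $x_2$ to get $\int\Delta\theta\cdot w\,\partial_2 w\,dx$, followed by componentwise expansion of $\Delta = \partial_1^2+\partial_2^2$ and further integrations by parts that shift one derivative from $\theta$ onto $w$. Each surviving term then involves at most $\rVert\nabla\theta\rVert_{L^\infty}\le\rVert\theta\rVert_{H^k}$ and two copies of $\nabla w$ in $L^2$, again matching $\rVert\partial_1|D|^{-1}u_2\rVert_{H^2}^2\rVert\theta\rVert_{H^k}$.

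\textbf{Main obstacle.} In \eqref{non_lin_es_1}, extracting the form $\rVert u\rVert_{H^k}^2\rVert\theta\rVert_{H^k}$ (rather than the elementary cubic $\rVert u\rVert_{H^k}\rVert\theta\rVert_{H^k}^2$) is delicate and relies on Young-type rearrangements that exploit $\rVert u_2\rVert_{H^k}\le\rVert u\rVert_{H^k}$ together with the smoothing structure $u_2=(-\Delta)^{-1}\partial_1^2\theta$; this form is crucial for the later absorption into the $-C\rVert u\rVert_{H^k}^2$ dissipation term. In \eqref{non_lin_es_2}, the obstacle is to keep every derivative count compatible with $k>2$: a naive estimate of the second integral above would demand $\rVert\Delta\partial_2\theta\rVert_{L^\infty}$, unavailable for $k\le 3$, so the multiple integrations by parts described must redistribute derivatives so that $\theta$ is hit by no more than $\nabla$ in $L^\infty$ while the remaining derivatives accumulate on $w=\partial_1 u_2$ in $L^2$, which is precisely what $\rVert\partial_1|D|^{-1}u_2\rVert_{H^2}^2$ controls.
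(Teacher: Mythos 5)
The proposal has genuine gaps in both parts.

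\textbf{Gap in \eqref{non_lin_es_1}.} Applying the standard commutator estimate \eqref{standard_tamepp} and pairing with $|D_2|^k\theta$ does give the term $\rVert\nabla u_2\rVert_{L^\infty}\rVert\theta\rVert_{H^k}^2$ plus a term of the form $\rVert u_2\rVert_{H^k}\rVert\partial_2\theta\rVert_{L^\infty}\rVert\theta\rVert_{H^k}$, i.e.\ one power of $u$ and two powers of $\theta$. The lemma demands the opposite distribution, $\rVert u\rVert_{H^k}^2\rVert\theta\rVert_{H^k}$, and the asymmetry matters: in Proposition~\ref{energy_IPM_estimate} this term is absorbed by the dissipation $-C\rVert u\rVert_{H^k}^2$ when $\rVert\theta\rVert_{H^k}$ is small, whereas a term like $\rVert u\rVert_{H^k}\rVert\theta\rVert_{H^k}^2$ cannot be absorbed this way. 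Your Young's-inequality repackaging leaves a residual $\rVert\partial_2\theta\rVert_{L^\infty}^2\rVert\theta\rVert_{H^k}\lesssim\rVert\theta\rVert_{H^k}^3$, and there is no term on the right-hand side of \eqref{non_lin_es_1} that can accept $\rVert\theta\rVert_{H^k}^3$ (the lemma carries no smallness hypothesis, and one cannot replace $\rVert\theta\rVert_{H^k}$ by $\rVert u\rVert_{H^k}$ or $\rVert\nabla u_2\rVert_{L^\infty}$, since $u$ is one derivative weaker than $\theta$). What makes the paper's proof work is that it does \emph{not} stop at the first-order Kato--Ponce: it subtracts the first-order Taylor term $k(|D_2|u_2)|D_2|^{k-1}\partial_2\theta$ as well (this gives exactly the $\rVert\nabla u_2\rVert_{L^\infty}\rVert\theta\rVert_{H^k}^2$ term and nothing more), and then analyzes the \emph{second-order} commutator $I_1$ in Fourier variables, using the structural identity $u_2=\partial_1\Psi$ to produce a spare $\partial_1$ that converts each remaining copy of $\theta$ into a copy of $\nabla\Psi=u^\perp$. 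Your proposal never extracts this second-order remainder, and without it the target form is out of reach.

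\textbf{Gap in \eqref{non_lin_es_2}.} The substitution $\partial_1^3\theta=-\Delta(\partial_1 u_2)$ and the first integration by parts are fine, and the first resulting term $\int\partial_2\theta\,|\nabla w|^2\,dx$ is indeed bounded by $\rVert\theta\rVert_{H^k}\rVert\partial_1|D|^{-1}u_2\rVert_{H^2}^2$. However, your plan for the second term $\int w\,\nabla\partial_2\theta\cdot\nabla w\,dx$ does not close. After rewriting it as $-\tfrac12\int\Delta\partial_2\theta\,w^2\,dx$ and shifting a derivative back, you get terms such as $\int w\,\partial_{12}\theta\,\partial_1 w\,dx$, where $\partial_{12}\theta$ would need to be measured in $L^\infty$ (unavailable for $2<k\le 3$), or, after a further integration by parts, terms such as $\int\partial_1\theta\,w\,\partial_{12}w\,dx$, where $\partial_{12}w=\partial_{112}u_2$ carries three derivatives on $u_2$ in $L^2$ and cannot be bounded by $\rVert\partial_1|D|^{-1}u_2\rVert_{H^2}$ (on the Fourier side the weight $n^4\xi^2$ is not dominated by $1+n^2(n^2+\xi^2)$ when $\xi\gtrsim n$). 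The paper avoids this entirely: it moves $|D|$ onto the product $\partial_1 u_2\,\partial_2\theta$ and $|D|^{-1}$ onto $\partial_{111}\theta$ before Cauchy--Schwarz, and then bounds $\rVert|D|(\partial_1 u_2\,\partial_2\theta)\rVert_{L^2}$ via Young/H\"older on the Fourier side with carefully chosen exponents (e.g.\ $q=k/(k-1)$) so that the estimate $\rVert\widehat{|D|\partial_2\theta}\rVert_{L^q}\lesssim\rVert\theta\rVert_{H^k}$ via \eqref{Hoilder_fouydirsd} is valid for all $k>2$. This $k$-dependent fractional H\"older step is the whole point; your integration-by-parts scheme does not have a substitute for it, and some derivative configuration will always be unbounded in bare $L^\infty\times L^2\times L^2$ for $k$ near $2$.
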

   \begin{proof}
   Let us fix our notations. For $f,g\in L^2(\Omega)$ (we denote the Fourier coefficients of $f$ by $\widehat{f}_{n}(\xi)$, see \eqref{fourier_notation}),
 \begin{align}\label{fourier_convolution_no} 
 \rVert \widehat{f}\rVert_{L^p}^p:=\sum_{n\in\mathbb{Z}}\int_{\mathbb{R}}|\widehat{f}_n(\xi)|^pd\xi \text{ for $p>1$},\quad \left(\widehat{f}*\widehat{g}\right)_{n}(\xi):= \sum_{n_1\in\mathbb{Z}}\int_{\mathbb{R}} |\widehat{f}_{n_1}(\xi_1)| |\widehat{g}_{n-n_1}(\xi-\xi_1)|d\xi_1.
 \end{align} Note that we take absolute values in the integral inside  the definition of the convolution.

  Young's inequality for convolution tells us
 \begin{align}\label{young_Fourier_1}
 \rVert \widehat{f}*\widehat{g}\rVert_{L^r}\le \rVert \widehat{f}\rVert_{L^p}\rVert \widehat{g}\rVert_{L^q},\text{ for $1+\frac{1}{r}=\frac{1}{p}+\frac{1}{q}$ and  $r,p,q\in [1,\infty]$.}
 \end{align}
 Furthermore, for $p\in [1,2)$ and $\alpha>0$ such that $\frac{2\alpha p}{2-p}>2$, the H\"older inequality gives us
 \begin{align}
 \rVert \widehat{f}\rVert_{L^p}^p& =\sum_{n\in\mathbb{Z}}\int_{\mathbb{R}}\left|(1+|n|+|\xi|)^{\alpha}\widehat{f}\right|^p (1+|n|+|\xi|)^{-\alpha p}d\xi\nonumber\\
 &\le \left(\sum_{n\in\mathbb{Z}}\int_{\mathbb{R}}\left|(1+|n|+|\xi|)^{\alpha}\widehat{f}\right|^2 d\xi\right)^{\frac{p}2}\left( \sum_{n\in\mathbb{Z}}\int_{\mathbb{R}} (1+|n|+|\xi|)^{-\frac{2\alpha p}{2-p}}d\xi\right)^{\frac{2-p}{2}}\nonumber\\
 &\le C_{p,\alpha}\rVert f\rVert_{H^\alpha}^p. \label{Hoilder_fouydirsd}
 \end{align}
 Especially, when $p=1$ and $\alpha=k-1$, we have $\frac{2\alpha p}{2-p} = 2(k-1)>2$, hence
 \begin{align}\label{special_calse_1}
 \rVert \widehat{f}\rVert_{L^1}\le C\rVert f\rVert_{H^{k-1}}.
 \end{align}
 Now we start proving the estimates in the lemma.

 \textbf{Proof of \eqref{non_lin_es_1}.}
 We write
 \begin{align}\label{I2split}
 I&:=\int\left(|D_2|^k(u_2\partial_2 \theta) - u_2 |D_2|^k\partial_2\theta\right) |D_2|^k\theta dx\nonumber\\
  &= \int \left(|D_2|^k (u_2\partial_2\theta) - k \left(|D_2| u_2 \right)|D_2|^{k-1}\partial_2\theta - u_2|D_2|^{k}\partial_2\theta\right)|D_2|^k\theta dx \nonumber\\
 & \ + k\int |D_2|u_2 \left(|D_2|^{k-1}\partial_2\theta \right)|D_2|^k\theta dx\nonumber\\
 &=: I_{1}+I_{2}.
 \end{align}
It is trivial that $I_{2}$ can be estimated as
\begin{align}\label{I22estimate}
|I_{2}|\le \rVert \nabla u_2\rVert_{L^\infty}\rVert \theta\rVert_{H^k}^2.
\end{align}
For $I_{1}$, the integral can be expressed, in terms of the integrands' Fourier coefficients, as
\begin{align}\label{I21expression}
I_{1} = \sum_{n_1,n_2\in\mathbb{Z}}\int_{(\xi_1,\xi_2)\in\mathbb{R}^2} m(\xi_1,\xi_2)\widehat{u_2}_{n_1}(\xi_1)\widehat{\partial_2\theta}_{n_2}(\xi_2)\widehat{\theta}_{n_1+n_2}(\xi_1+\xi_2)d\xi_1d\xi_2,
\end{align}
where
\[
m(\xi_1,\xi_2):=\left(|\xi_1+\xi_2|^k -k|\xi_1||\xi_2|^{k-1}-|\xi_2|^k\right)|\xi_1+\xi_2|^k.
\]
Let us rewrite the expression in \eqref{I21expression}, using that $u_2=\partial_1\Psi$, as
\begin{align}\label{I21_stevie_wonder}
I_{1} = \sum_{n_1,n_2\in\mathbb{Z}}\int_{(\xi_1,\xi_2)\in\mathbb{R}^2}  \underbrace{m(\xi_1,\xi_2)\left( i n_1\right)}_{=:m_1(n_1,\xi_1,\xi_2)} \widehat{\Psi}_{n_1}(\xi_1)\widehat{\partial_2\theta}_{n_2}(\xi_2)\widehat{\theta}_{n_1+n_2}(\xi_1+\xi_2)d\xi_1d\xi_2.
\end{align}
Then the symbol $m_1$ can be bounded as
\begin{align*}
|m_1(n_1,\xi_1,\xi_2)| &=\left(|\xi_1+\xi_2|^k -k|\xi_1||\xi_2|^{k-1}-|\xi_2|^k\right)|\xi_1+\xi_2|^k |n_1|\\
&\le_C \left(|\xi_1|^k + |\xi_1|^2|\xi_2|^{k-2} \right)|\xi_1+\xi_2|^k |n_1|\\
&\le_C \left(|\xi_1|^k + |\xi_1|^2|\xi_2|^{k-2} \right) |\xi_1+\xi_2|^k|n_1+n_2| + \left(|\xi_1|^k + |\xi_1|^2|\xi_2|^{k-2} \right) |\xi_1+\xi_2|^k|n_2|.
\end{align*}
Since it holds that, for $(\xi_1,\xi_2)\in\mathbb{R}^2$,
\[
\left(|\xi_1|^k + |\xi_1|^2|\xi_2|^{k-2} \right) |\xi_1+\xi_2|^k\le_C  |\xi_1|^{k+1}|\xi_1+\xi_2|^{k-1} + |\xi_1|^2|\xi_2|^{k-1}|\xi_1+\xi_2|^{k-1},
\]
We have
\begin{align*}
|m_{1}(n_1,\xi_1,\xi_2)|&\le_C |\xi_1|^{k+1}|\xi_1+\xi_2|^{k-1}|n_1+n_2|\\
&\  +|\xi_1|^2|\xi_2|^{k-1}|\xi_1+\xi_2|^{k-1}|n_1+n_2|\\
& \ + |\xi_1|^k|\xi_1+\xi_2|^k|n_2|\\
& \ + |\xi_1|^2|\xi_2|^{k-2}|\xi_1+\xi_2|^k|n_2|\\
& =: m_{11}+m_{12}+m_{13}+m_{14}.
\end{align*}
Hence, plugging this into \eqref{I21_stevie_wonder}, we have
\begin{align}\label{other_terms}
|I_{1}|\le C \sum_{i=1}^4 I_{1i},\quad I_{1i}:= \sum_{n_1,n_2\in\mathbb{Z}}\int_{(\xi_1,\xi_2)\in\mathbb{R}^2} m_{2i}(n_1,n_2,\xi_1,\xi_2)\left|\widehat{\Psi}_{n_1}(\xi_1)\widehat{\partial_2\theta}_{n_2}(\xi_2)\widehat{\theta}_{n_1+n_2}(\xi_1+\xi_2)\right|d\xi_1d\xi_2.
\end{align}
Using the notations in \eqref{fourier_convolution_no}, we can write
\begin{align}
I_{11}&=\Big\rVert \widehat{|D_2|^{k+1}\Psi}\left(\widehat{\partial_2\theta}*\widehat{|D_2|^{k-1}|D_1|\theta} \right)\Big\rVert_{L^1}\label{211_in1211}\\
I_{12}&=\Big\rVert \widehat{|D_2|^{2}\Psi}\left(\widehat{|D_2|^{k-1}\partial_2\theta}*\widehat{|D_2|^{k-1}|D_1|\theta} \right)\Big\rVert_{L^1}\label{211_in1212}\\
I_{13}&=\Big\rVert \widehat{|D_2|^{k}\Psi}\left(\widehat{|D_1|\partial_2\theta}*\widehat{|D_2|^{k}\theta} \right)\Big\rVert_{L^1}\label{211_in1213}\\
I_{14}&=\Big\rVert \widehat{|D_2|^{2}\Psi}\left(\widehat{|D_2|^{k-2}|D_1|\partial_2\theta}*\widehat{|D_2|^{k}\theta} \right)\Big\rVert_{L^1}\label{211_in1214}
\end{align}
 Using \eqref{young_Fourier_1} and \eqref{special_calse_1}, we see that 
\begin{align*}
|I_{11}|&\le \Big\rVert \widehat{|D_2|^{k+1}\Psi}\Big\rVert_{L^2}\rVert \widehat{\partial_2\theta}\rVert_{L^1}\Big\rVert \widehat{|D_2|^{k-1}|D_1|\theta}\Big\rVert_{L^2}\le_C \rVert \Psi\rVert_{H^{k+1}}^2\rVert \theta\rVert_{H^k}\le \rVert u\rVert_{H^k}^2\rVert \theta\rVert_{H^k},\\
|I_{12}|&\le\Big \rVert \widehat{|D_2|^2\Psi}\Big\rVert_{L^1}\Big\rVert \widehat{|D_2|^{k-1}\partial_2\theta}\Big\rVert_{L^2}\Big\rVert \widehat{|D_2|^{k-1}|D_1|\theta}\Big\rVert_{L^2}\le_C \rVert \Psi\rVert_{H^{k+1}}^2\rVert \theta\rVert_{H^k}\le \rVert u\rVert_{H^k}^2\rVert \theta\rVert_{H^k},\\
|I_{14}|&\le \Big\rVert \widehat{|D_2|^2\Psi}\Big\rVert_{L^1}\Big\rVert \widehat{|D_2|^{k-2}|D_1|\partial_2\theta}\Big\rVert_{L^2}\Big\rVert \widehat{|D_2|^k\theta}\Big\rVert_{L^2}\le_C \rVert \Psi\rVert_{H^{k+1}}^2\rVert \theta\rVert_{H^k}\le \rVert u\rVert_{H^k}^2\rVert \theta\rVert_{H^k},
\end{align*}
thus
\begin{align}\label{rest_estimates_1dx}
|I_{11}|+|I_{12}|+|I_{14}|\le C\rVert u\rVert_{H^k}^2\rVert \theta\rVert_{H^k}.
\end{align}
To estimate $I_{13}$, we apply \textcolor{black}{the} H\"older inequality, yielding that
\begin{align}\label{I213estimate}
|I_{13}|& \le \Big\rVert \widehat{|D_2|^k\Psi}\Big\rVert_{L^p}\Big\rVert \widehat{|D_1|\partial_2\theta}*\widehat{|D_2|^k\theta}\Big\rVert_{L^q}\quad \text{ (for $p^{-1}+q^{-1}=1$)}\nonumber\\
& \le \Big\rVert \widehat{|D_2|^k\Psi}\Big\rVert_{L^p} \Big\rVert \widehat{|D_1|\partial_2\theta}\Big\rVert_{L^{\frac{2q}{q+2}}}\Big\rVert \widehat{|D_2|^k\theta}\Big\rVert_{L^2}.
\end{align}
Fixing $q$ so that
\begin{align*}
 \max\left\{\frac{3}{k-2},2\right\}<q \in (2,\infty),
\end{align*}
we have  
\begin{align*}
1<p<2,\quad 1<\frac{2q}{2+q}<2.
\end{align*} 
Applying \eqref{Hoilder_fouydirsd} to $\Big\rVert \widehat{|D_2|^k\Psi}\Big\rVert_{L^p}$, we get
\[
\Big\rVert \widehat{|D_2|^k\Psi}\Big\rVert_{L^p} \le C \rVert |D_2|^k\Psi\rVert_{H^1}\le C\rVert u\rVert_{H^{k}}.
\]
\textcolor{black}{Similarly,} we apply \eqref{Hoilder_fouydirsd} to $\Big\rVert \widehat{|D_1|\partial_2\theta}\Big\rVert_{L^{\frac{2q}{q+2}}}$, obtaining
\[
\Big\rVert \widehat{|D_1|\partial_2\theta}\Big\rVert_{L^{\frac{2q}{q+2}}}\le C\Big\rVert |D_1|\partial_2\theta\Big\rVert_{H^{k-2}}\le C\rVert u\rVert_{H^k}.
\]
Therefore, \eqref{I213estimate} reads
\[
|I_{13}|\le C\rVert u\rVert_{H^k}^2\rVert \theta\rVert_{H^k}.
\]
\textcolor{black}{Plugging this estimate and \eqref{rest_estimates_1dx} into \eqref{other_terms} gives}
\begin{align*}
|I_{1}|\le C \rVert u\rVert_{H^k}^2\rVert \theta\rVert_{H^k}. 
\end{align*}
Together with \eqref{I22estimate} and \eqref{I2split}, we obtain \eqref{non_lin_es_1}.

\textbf{Proof of \eqref{non_lin_es_2}.}  We have
  \begin{align*}
  J&:= \int \partial_1u_2\partial_2\theta|D| \left(\partial_{111}|D|^{-1}\theta\right)dx = \int |D|\left(\partial_1u_2 \partial_2\theta \right)\left(\partial_{111}|D|^{-1}\theta \right)dx.
  \end{align*}
  Hence, we \textcolor{black}{also} have
  \begin{align}\label{B_21How}
  |J|\le \Big\rVert \partial_{111}|D|^{-1}\theta\Big\rVert_{L^2}\Big\rVert |D|(\partial_1u_2\partial_2\theta)\Big\rVert_{L^2}.
   \end{align}
 It is straightforward that
 \begin{align}\label{Jeff}
 \Big\rVert \partial_{111}|D|^{-1}\theta\Big\rVert_{L^2}=\rVert \partial_{11}\nabla \Psi\rVert_{L^2}=\rVert \nabla\partial_1u_2\rVert_{L^2}=\Big\rVert \partial_1|D|^{-1}u_2\Big\rVert_{H^2}.
 \end{align}
 To estimate $\Big\rVert |D|(\partial_1u_2\partial_2\theta)\Big\rVert_{L^2}$, we look at the Fourier coefficients. Using the notations in \eqref{fourier_convolution_no} and the Plancherel theorem, we can \textcolor{black}{bound} it as 
 \begin{align}\label{D1sd_parbno}
 \Big\rVert |D|(\partial_1u_2\partial_2\theta)\Big\rVert_{L^2} \le \rVert (|\xi|+|n|)\widehat{\partial_1u_2}*\widehat{\partial_2\theta}\rVert_{L^2}=:\rVert \widehat{g}\rVert_{L^2}.
 \end{align}
  where $\widehat{g}$ is given by
  \[
  \widehat{g}_n(\xi)= (|\xi| + |n|)\sum_{n_1\in\mathbb{Z}}\int_{\mathbb{R}}|\widehat{\partial_1u_2}_{n-n_1}(\xi-\xi_1)||\widehat{\partial_2\theta}_{n_1}(\xi_1)|d\xi_1.
  \]
We estimate $\widehat{g}_n(\xi)$ as
\begin{align*}
|\widehat{g}_n(\xi)| &\le \sum_{n_1\in\mathbb{Z}}\int_{\mathbb{R}} (|\xi-\xi_1|+|n-n_1|)|\widehat{\partial_1u_2}_{n-n_1}(\xi-\xi_1)||\widehat{\partial_2\theta}_{n_1}(\xi_1)|d\xi_1\\
& \ +\sum_{n_1\in\mathbb{Z}}\int_{\mathbb{R}} \left|\widehat{\partial_1u_2}_{n-n_1}(\xi-\xi_1)\right|(|\xi_1| + |n_1|)\left|\widehat{\partial_2\theta}_{n_1}(\xi_1)\right|d\xi_1\\
&=\left(\widehat{|D|\partial_1 u_2}*\widehat{\partial_2\theta}\right)_n(\xi) + \left(\widehat{\partial_1 u_2}*\widehat{|D|\partial_2\theta} \right)_n(\xi).
\end{align*}
 Therefore, the Plancherel theorem and Young's inequality for convolution give us
 \begin{align}\label{choice_pq_g}
 \rVert \widehat{g}\rVert_{L^2}&\le \Big\rVert |D|\partial_1u_2\Big\rVert_{L^2}\rVert \widehat{\partial_2\theta}\rVert_{L^1} + \rVert \widehat{\partial_1u_2}\rVert_{L^p}\Big\rVert \widehat{|D|\partial_2\theta}\Big\rVert_{L^q}\text{ for $p^{-1}+q^{-1}=\frac{3}{2}$}\nonumber\\
 & \le  \Big\rVert \partial_1|D|^{-1}u_2\Big\rVert_{H^2}\rVert \theta\rVert_{H^k}+ \rVert \widehat{\partial_1u_2}\rVert_{L^p}\Big\rVert \widehat{|D|\partial_2\theta}\Big\rVert_{L^q},
 \end{align}
 where the last inequality follows from \eqref{special_calse_1}.
Let us estimate $\rVert \widehat{\partial_1u_2}\rVert_{L^p}$ and $\Big\rVert \widehat{|D|\partial_2\theta}\Big\rVert_{L^q}$ for appropriately chosen $p,q$. We choose \textcolor{black}{$q$ to satisfy $q=\frac{k}{k-1} $ so that $p=\frac{2k}{k+2}$.} Since $k>2$, it follows easily that
 \textcolor{black}{\begin{align*}
 1<p<2 \quad \mbox{and}\quad 1<q< 2.
 \end{align*}}
 Hence, applying \eqref{Hoilder_fouydirsd}  to $\rVert \widehat{\partial_1u_2}\rVert_{L^p}$ gives us
 \[
 \rVert \widehat{\partial_1u_2}\rVert_{L^p}\le C\rVert \partial_1u_2\rVert_{H^1}\le C\Big\rVert \partial_1 |D|^{-1}u_2\Big\rVert_{H^2}.
 \]
 Also, applying \eqref{Hoilder_fouydirsd}  to $\Big\rVert \widehat{|D|\partial_2\theta}\Big\rVert_{L^q}$, we get
 \[
 \Big\rVert \widehat{|D|\partial_2\theta}\Big\rVert_{L^q}\le C \Big\rVert |D|\partial_2\theta\Big\rVert_{H^{k-2}}\le C \rVert \theta\rVert_{H^k}.
 \]
 Thus \eqref{choice_pq_g} reads $\rVert \widehat{g}\rVert_{L^2}\le \Big\rVert \partial_1|D|^{-1}u_2\Big\rVert_{H^2}\rVert \theta\rVert_{H^k}$. Plugging this estimate and \eqref{Jeff} into \eqref{B_21How}, we arrive at
 \[
 |J|\le \Big\rVert \partial_1|D|^{-1}u_2\Big\rVert_{H^2}^2\rVert \theta\rVert_{H^k},
 \]
 which is the desired estimate.   \end{proof}
 
 Now, we give an estimate for $\rVert \theta\rVert_{H^k}$.
   \begin{proposition}\label{energy_IPM_estimate}
   Let $\theta\in C([0,T); H^k)$ be a solution to \eqref{IPM_theta}. Let $\gamma$ be as in \eqref{alpha_def}. Then there exist $\delta_0=\delta_0(\gamma)>0$  and $C=C(\gamma)$ such that if 
   \[
   \sup_{t\in[0,T]}\rVert \theta(t)\rVert_{H^k}<\delta_0,
   \] then
   \begin{equation}
       \label{eq:eshn} \begin{aligned}
\frac{1}2\frac{d}{dt}\rVert \theta\rVert_{{H}^k}^2\le -C\rVert u\rVert_{H^k}^2 + C\left( \rVert \nabla u_2\rVert_{L^\infty}\rVert \theta\rVert_{H^k}^2 + \rVert u\rVert_{L^2}^2\right).
\end{aligned}
   \end{equation}  
for $0\le t \le T$.
   \end{proposition}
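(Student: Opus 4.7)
The plan is to estimate the $L^2$ norm and the top-order pieces $\||D_j|^k\theta\|_{L^2}$ ($j=1,2$) separately, by applying $|D_j|^k$ to \eqref{IPM_theta} and testing against $|D_j|^k\theta$. In every piece the linear source $-\partial_2\rho_s u_2$ will be converted via integration by parts in the stream function $\Psi$ into the coercive damping $-\gamma\||D_j|^k u\|_{L^2}^2$, while the nonlinear transport commutator will be treated so that only $\|\nabla u_2\|_{L^\infty}$ (never the full $\|\nabla u\|_{L^\infty}$) multiplies $\|\theta\|_{H^k}^2$ at top order.

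At the $L^2$ level the transport $\int u\cdot\nabla\theta\cdot\theta$ vanishes by $\nabla\cdot u=0$, so $\frac{1}{2}\frac{d}{dt}\|\theta\|_{L^2}^2=\int(-\partial_2\rho_s)u_2\theta\,dx$. Using $u_2=\partial_1\Psi$ and $\partial_1\theta=-\Delta\Psi$ together with two integrations by parts gives $-\int(-\partial_2\rho_s)|\nabla\Psi|^2\,dx-\tfrac{1}{2}\int\partial_{222}\rho_s\,\Psi^2\,dx\le -\gamma\|u\|_{L^2}^2+C\|u\|_{L^2}^2$, where the Poincaré inequality in $x_1$ applies because $-\Delta\Psi=\partial_1\theta$ together with $\Psi\to 0$ at infinity forces $\int_{\mathbb{T}}\Psi\,dx_1\equiv 0$, and hence $\|\Psi\|_{L^2}\le C\|\partial_1\Psi\|_{L^2}=C\|u_2\|_{L^2}$. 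For each $j=1,2$ the same identity applied to $\tilde\Psi:=|D_j|^k\Psi$ (which satisfies $\partial_1\tilde\Psi=|D_j|^k u_2$ and $-\Delta\tilde\Psi=\partial_1|D_j|^k\theta$) turns the main source contribution at level $k$ into $-\gamma\||D_j|^k u\|_{L^2}^2+O(\|u\|_{H^{k-1}}^2)$, and the standard interpolation $\|u\|_{H^{k-1}}^2\le\varepsilon\|u\|_{H^k}^2+C_\varepsilon\|u\|_{L^2}^2$ absorbs the error into $\varepsilon\|u\|_{H^k}^2+C\|u\|_{L^2}^2$. The source commutator $[|D_j|^k,\partial_2\rho_s]u_2$ vanishes when $j=1$ because $\partial_2\rho_s$ is independent of $x_1$; for $j=2$ it is reduced, via the same $u_2=\partial_1\Psi$ trick and one $x_1$-integration by parts, to $\int\nabla\bigl([|D_2|^k,\partial_2\rho_s]\Psi\bigr)\cdot\nabla|D_2|^k\Psi$, which an $L^\infty$-based commutator bound (finite under $\|\partial_2\rho_s\|_{C^{k+1}}<\infty$) estimates by $C\|u\|_{H^{k-1}}\|u\|_{H^k}$, again absorbed.

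The real work is the transport commutator $-\int[|D_j|^k,u\cdot\nabla]\theta\cdot|D_j|^k\theta=-\int[|D_j|^k,u_1]\partial_1\theta\cdot|D_j|^k\theta-\int[|D_j|^k,u_2]\partial_2\theta\cdot|D_j|^k\theta$, for which a naive application of the Kato--Ponce estimate \eqref{standard_tamepp} would cost the full $\|\nabla u\|_{L^\infty}$. The crucial structural identity is $\partial_1 u_1=-\partial_2 u_2$ from $\nabla\cdot u=0$: whenever an $L^\infty$ norm of a derivative of $u_1$ is produced by integration by parts in $x_1$, it is immediately rewritten in terms of $\partial_2 u_2$. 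In particular, writing out $\int[|D_j|^k,u_1]\partial_1\theta\cdot|D_j|^k\theta$ and moving the innermost $\partial_1$ off of $|D_j|^k\theta$ yields the boundary-type term $\tfrac{1}{2}\int\partial_1 u_1(|D_j|^k\theta)^2=-\tfrac{1}{2}\int\partial_2 u_2\,(|D_j|^k\theta)^2$, which is already of the desired form $\le\|\nabla u_2\|_{L^\infty}\|\theta\|_{H^k}^2$. For the $u_2\partial_2\theta$ commutator at $j=2$ we invoke part~(1) of Lemma~\ref{Nonlinear_estimate_lem1}, which directly gives $\lesssim\|\nabla u_2\|_{L^\infty}\|\theta\|_{H^k}^2+\|u\|_{H^k}^2\|\theta\|_{H^k}$; and for the residual $\int\partial_1 u_2\partial_2\theta\,\partial_{111}\theta$-type integrals that appear when distributing derivatives in the $|D_1|^k$ analysis of the $u_1\partial_1\theta$ piece we invoke part~(2) of the same lemma, which is tailored to this exact anisotropic structure. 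All the resulting quadratic-in-$u$ errors have the shape $\|u\|_{H^k}^2\|\theta\|_{H^k}\le\delta_0\|u\|_{H^k}^2$ and are absorbed into the main damping $-C\|u\|_{H^k}^2$ once $\delta_0$ is chosen small enough. The main obstacle is precisely this elimination of $\|\nabla u_1\|_{L^\infty}$ from the top-order transport estimate across the full range $k>2$; the anisotropic Fourier analysis behind Lemma~\ref{Nonlinear_estimate_lem1}, combined with the divergence-free cancellation above, is what makes this possible and thereby yields \eqref{eq:eshn}.
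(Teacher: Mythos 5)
Your overall scheme (split into linear source plus transport commutator, convert the source to coercive damping via the stream function, use the zero $x_1$-mean of $\Psi$ and Poincar\'e for the $\|\Psi\|_{L^2}\le\|u\|_{L^2}$ step) matches the paper, and the invocation of Lemma~\ref{Nonlinear_estimate_lem1}\,\eqref{non_lin_es_1} for the $|D_2|^k$ commutator with $u_2\partial_2\theta$ is correct. However, your treatment of the transport commutator involving $u_1\partial_1\theta$ --- which is where the proposition actually earns its keep for the full range $k>2$ --- has a genuine gap. The identity $\partial_1 u_1=-\partial_2 u_2$ only accounts for the top-order convective term $\tfrac12\int u\cdot\nabla(|D_j|^k\theta)^2=0$, which the paper disposes of immediately by $\nabla\cdot u=0$ \emph{before} any commutator estimate. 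It does nothing for the commutator $[|D_2|^k,u_1]\partial_1\theta$ itself, whose principal contributions carry $\partial_2 u_1$ (not $\partial_1 u_1$), and $\partial_2 u_1$ cannot be rewritten in terms of $\partial_2 u_2$. The mechanism the paper actually uses is different and is missing from your argument: since $\partial_1\theta=-\Delta\Psi$, the quantity $\partial_1\theta$ has a full extra unit of Sobolev regularity compared to $\theta$, so that $\|\partial_1\theta\|_{L^\infty}+\|\partial_1\theta\|_{H^{k-1}}\lesssim\|u\|_{H^k}$ and $\|u\|_{W^{1,\infty}}\lesssim\|u\|_{H^k}$. Plugging these into the standard commutator estimate~\eqref{standard_tamepp} turns the entire $u_1\partial_1\theta$ commutator (for both $i=1$ and $i=2$) into an $O(\|u\|_{H^k}^2\|\theta\|_{H^k})$ term, absorbed by the damping once $\|\theta\|_{H^k}<\delta_0$. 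Without this regularity gain you are left facing an uncontrolled $\|\partial_2 u_1\|_{L^\infty}\|\theta\|_{H^k}^2$, which is exactly the term the proposition is designed to avoid.

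Two further inaccuracies: you invoke part~(2) of Lemma~\ref{Nonlinear_estimate_lem1} (the bound on $\int\partial_1u_2\,\partial_2\theta\,\partial_{111}\theta\,dx$) for ``residual integrals in the $|D_1|^k$ analysis of the $u_1\partial_1\theta$ piece,'' but that estimate is not used in Proposition~\ref{energy_IPM_estimate} at all --- it is needed only in Proposition~\ref{velocity_derivative}, and the integral in question arises there from $[\partial_{11},u_2]\partial_2\theta$, not from $u_1\partial_1\theta$. In the paper's proof of the present proposition, the entire $|D_1|^k$ nonlinear contribution is handled in one step via $\||D_1|^k\theta\|_{L^2}=\||D_1|^{k-2}\partial_1\Delta\Psi\|_{L^2}\le\|u\|_{H^k}$ together with~\eqref{standard_tamepp}, with no appeal to Lemma~\ref{Nonlinear_estimate_lem1}. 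And the $u_2\partial_2\theta$ commutator in the $|D_1|^k$ piece, which your proposal never addresses separately, is also covered by that same observation. So the route you describe would not close without supplying the $\partial_1\theta=-\Delta\Psi$ regularity-gain argument.
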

\begin{proof}
In what follows  $|D_i|$ will denote either $|D_1|$ or $|D_2|$, which is defined in \eqref{d_denf1}.
Using \eqref{IPM_theta}, we compute
\begin{align}\label{energe1}
\frac{1}2\frac{d}{dt}\Big\rVert |D_i|^k \theta\Big\rVert_{L^2}^2 = -\int |D_i|^k(u\cdot\nabla \theta) |D_i|^k\theta dx +\int |D_i|^k\left(-\partial_2\rho_su_2\right)|D_i|^k\theta dx
\end{align}
We first simplify the second term \textcolor{black}{on} the right-hand side. Using the stream function $\Psi$ such that  $u_2=\partial_1\Psi$ and $\partial_1\theta=-\Delta \Psi$, we have
\begin{align}\label{linear_gets_ugly}
\int |D_i|^k(-\partial_2\rho_su_2)|D_i|^k\theta dx&=\int \partial_1|D_i|^k(-\partial_2\rho_s\Psi) |D_i|^k\theta dx = -\int |D_i|^k(-\partial_2\rho_s\Psi )|D_i|^k\partial_1\theta dx\nonumber\\
& = \int |D_i|^k(-\partial_2\rho_s\Psi) |D_i|^k\Delta \Psi dx=- \int\nabla |D_i|^k(-\partial_2\rho_s\Psi)\cdot\nabla|D_i|^k\Psi dx\nonumber\\
&= -  \int \partial_2\rho_s|\nabla |D_i|^k\Psi|^2dx\nonumber\\
& \  + \int \left(\nabla |D_i|^k(\partial_2\rho_s \Psi) - \partial_2\rho_s \nabla|D_i|^k\Psi\right)\cdot \nabla |D_i|^k\Psi dx\nonumber\\
&\le -C\Big\rVert |D_i|^ku\Big\rVert_{L^2}^2 +\rVert \Psi\rVert_{\dot{H}^{k+1}}\Big\rVert \nabla |D_i|^k(\partial_2\rho_s \Psi) - \partial_2\rho_s \nabla|D_i|^k\Psi\Big\rVert_{L^2},
\end{align}
where the last inequality is due to $-\partial_2\rho_s>\gamma$ and the Cauchy--Schwarz inequality.
A crude estimate  gives us
\[
\Big\rVert \nabla |D_i|^k(\partial_2\rho_s \Psi) - \partial_2\rho_s \nabla|D_i|^k\Psi\Big\rVert_{L^2}\le\footnote{\textcolor{black}{Technically speaking, this inequality does not follow from \eqref{standard_tamepp}. However, a rigorous proof of this inequality is  straightforward {\color{black} and we omit it.}}}  
C\rVert\partial_2\rho_s\rVert_{C^{k+1}}\rVert\Psi\rVert_{H^k}\le C\rVert \Psi\rVert_{H^k}\le  C_\eta \rVert\Psi\rVert_{L^2} + \eta\rVert\Psi\rVert_{\dot{H}^{k+1}},
\]
for any $\eta>0$, where the last inequality is due to the usual Sobolev interpolation theorem. Plugging this estimates into \eqref{linear_gets_ugly} with sufficiently small $\eta$, we obtain
\[
\sum_{i=1}^2\int |D_i|^k(-\partial_2\rho_su_2)|D_i|^k\theta dx\le -C\rVert u\rVert_{H^k}^2 + C\rVert \Psi\rVert_{L^2}^2.
\]
Furthermore, denoting
\[
G(x_2):=\int_{\mathbb{T}}\Psi(x_1,x_2)dx_1,
\]
we notice from \eqref{stream_IPM_2} that $g$ solves
\[
\partial_{22}G=0 \text{ and }\lim_{|x_2|\to \infty}G(x_2)=0.
\]
Hence, $G(x_2)=0$ for every $x_2$. \textcolor{black}{Then} the Poincar\'e inequality gives  us
\[
\rVert \Psi\rVert_{L^2}\le \rVert \partial_1\Psi\rVert_{L^2}\le \rVert u\rVert_{L^2}.
\]
Therefore we arrive at
\begin{align}\label{linear_full}
\sum_{i=1}^2\int |D_i|^k(-\partial_2\rho_su_2)|D_i|^k\theta dx\le -C\rVert u\rVert_{H^{k}}^2 + C\rVert u\rVert_{L^2}^2.
\end{align}
Note that in the derivation of this estimate, we have not used the condition that $k>2$. Indeed, repeating the same computations with $k=0$, we obtain
\begin{align}\label{L_2tesxc}
\frac{1}{2}\frac{d}{dt}\rVert \theta\rVert_{L^2}^2 = \int -\partial_2\rho_su_2\theta dx\le C \rVert u\rVert_{L^2}^2.
\end{align}
Now, we move on to estimate the first term \textcolor{black}{on} the right-hand side of \eqref{energe1}. As usual, we use the incompressibility of the velocity to obtain
\begin{align}\label{nonlinear_1}
\int |D_i|^k(u\cdot\nabla \theta) |D_i|^k\theta dx &= \int \left(|D_i|^k(u\cdot\nabla \theta) - u\cdot\nabla |D_i|^k\theta\right) |D_i|^k\theta dx = I.
\end{align}
 We claim that \begin{align}\label{once_show}
 |I|\le C\left( \rVert \nabla u_2\rVert_{L^\infty}\rVert \theta\rVert_{H^k}^2 +  \rVert u\rVert_{H^k}^2\rVert \theta\rVert_{H^k}\right), \text{ whether $i=1,2$.}
 \end{align}
Once the claim is \textcolor{black}{verified}, plugging it and \eqref{linear_full} into \eqref{energe1} gives us
\[
\frac{1}2\frac{d}{dt}\rVert \theta\rVert_{\dot{H}^k}^2 \le -C(1-C\rVert \theta\rVert_{H^k})\rVert u\rVert_{H^k}^2 +C\left( \rVert \nabla u_2\rVert_{L^\infty}\rVert \theta\rVert_{H^k}^2 + \rVert u\rVert_{L^2}^2\right).
\]
Hence, if $\rVert \theta(t)\rVert_{H^k}$ is sufficiently small depending on the implicit constant $C$, we obtain 
\[
\frac{d}{dt}\rVert \theta\rVert_{\dot{H}^k}^2 \le -C\rVert u\rVert_{H^k}^2 +C\left( \rVert \nabla u_2\rVert_{L^\infty}\rVert \theta\rVert_{H^k}^2 + \rVert u\rVert_{L^2}^2\right).
\]
Combining this with \eqref{L_2tesxc}, we derive \eqref{eq:eshn}.

In order to prove the claim, we split the cases when $i=1$ and $i=2$.

\textbf{When $i=1$.} In this case, \eqref{nonlinear_1} reads
\begin{align}\label{i1est}
\left| I \right| \le \Big\rVert |D_1|^k\theta\Big\rVert_{L^2}\Big\rVert |D_1|^k(u\cdot\nabla \theta) - u\cdot\nabla |D_1|^k\theta\Big\rVert_{L^2}.
\end{align}
The usual commutator estimate gives
\[
\Big\rVert |D_1|^k(u\cdot\nabla \theta) - u\cdot |D_1|^k\nabla\theta\Big\rVert_{L^2}\le_C\rVert u\rVert_{H^k} \rVert\nabla\theta\rVert_{L^\infty} + \rVert u\rVert_{W^{1,\infty}}\rVert\nabla \theta\rVert_{H^{k-1}}\le C\rVert u\rVert_{H^{k}}\rVert\theta\rVert_{H^k},
\]
where the last inequality follows from the Sobolev embedding $W^{1,\infty}(\Omega)\hookrightarrow H^k(\Omega)$. Also, note that
\[
\Big\rVert |D_1|^k\theta\Big\rVert_{L^2}=\Big\rVert |D_1|^{k-2}\partial_{11}\theta\Big\rVert_{L^2}=\Big\rVert |D_1|^{k-2}\partial_1\Delta\Psi\Big\rVert\le \rVert \Psi\rVert_{H^{k+1}}\le \rVert u\rVert_{H^k}.
\]
Therefore, the estimate \eqref{i1est} becomes
\begin{align}\label{i1est2}
|I|\le C\rVert u\rVert_{H^k}^2\rVert \theta\rVert_{H^k},\text{ when $i=1$.}
\end{align}

\textbf{When $i=2$.} In this case, \eqref{nonlinear_1} reads
\begin{align}\label{i2est}
I&= \int\left(|D_2|^k(u_1\partial_1 \theta) - u_1 |D_2|^k\partial_1\theta\right) |D_2|^k\theta dx+  \int\left(|D_2|^k(u_2\partial_2 \theta) - u_2 |D_2|^k\partial_2\theta\right) |D_2|^k\theta dx\nonumber\\
&=: I_{1}+I_2.
\end{align}
Again, the usual commutator estimate and the Cauchy--Schwarz inequality give us
\begin{align}\label{I1est}
|I_1|&\le_C \left(\rVert u\rVert_{H^k}\rVert\partial_1\theta\rVert_{L^\infty}+\rVert u\rVert_{W^{1,\infty}}\rVert \partial_1\theta\rVert_{H^{k-1}} \right)\rVert \theta\rVert_{H^k}\nonumber\\
&\le_C \rVert u\rVert_{H^k}(\rVert \partial_1\theta\rVert_{L^\infty}+\rVert u\rVert_{W^{1,\infty}})\rVert \theta\rVert_{H^k}\nonumber\\
& \le_C \rVert u\rVert_{H^k}\rVert \Psi\rVert_{W^{2,\infty}}\rVert \theta\rVert_{H^k}\nonumber\\
& \le_C \rVert u\rVert_{H^k}\rVert \Psi\rVert_{H^{k+1}}\rVert \theta\rVert_{H^k}\nonumber\\
&\le_C \rVert u\rVert_{H^k}^2\rVert \theta\rVert_{H^k}.
\end{align}
 A necessary estimate for $I_2$ was already derived in Lemma~\ref{Nonlinear_estimate_lem1}, namely, we have
\[
|I_2|\le_C \rVert \nabla u_2\rVert_{L^\infty}\rVert \theta\rVert_{H^k}^2 + \rVert u\rVert_{H^k}^2\rVert \theta\rVert_{H^k}.
\]
Finally, plugging this and \eqref{I1est} into \eqref{i2est}, we obtain
\[
|I|\le_C  \rVert \nabla u_2\rVert_{L^\infty}\rVert \theta\rVert_{H^k}^2 +  \rVert u\rVert_{H^k}^2\rVert \theta\rVert_{H^k} \text{ when $i=2$.}
\]
Collecting this and \eqref{i1est2}, we have proved \eqref{once_show}.
\end{proof}

\begin{proposition}\label{velocity_derivative}
Under the same assumption in Proposition~\ref{energy_IPM_estimate}, there exist $\delta_0=\delta_0(\gamma)>0$  and $C=C(\gamma)$ such that if 
\[
\sup_{t\in[0,T]}\rVert \theta(t)\rVert_{H^k}<\delta_0,
\] then
\begin{align}
\frac{d}{dt}\rVert \partial_{11}\theta\rVert_{L^2}^2 &\le  - C\Big\rVert \partial_1|D|^{-1}u_2\Big\rVert_{{H}^2}^2  +C\left( \rVert u\rVert_{H^k}^2 + \rVert \partial_2u_2\rVert_{L^\infty}\right)\rVert \partial_{11}\theta\rVert_{L^2}^2 + C \rVert u_2\rVert_{L^2}^2, \label{laplacian_1}\\
\frac{d}{dt} \rVert \partial_{12}\theta\rVert_{L^2}  & \le -C\rVert u_2\rVert_{H^2}^2 + C\left(\rVert u\rVert_{H^k}^2 + \rVert \partial_2u_2\rVert_{L^\infty}\right)\rVert \partial_{12}\theta\rVert_{L^2}^2\nonumber\\
& \ +C\left( \Big\rVert \partial_1|D|^{-1}u_2\Big\rVert_{H^2}\rVert\partial_{12}\theta\rVert_{L^2}\rVert \theta\rVert_{H^k} +   \Big\rVert \partial_1|D|^{-1}u_2\Big\rVert_{{H}^2}^2 +\rVert u\rVert_{L^2}^2\right). \label{laplacian_2} 
\end{align}
\end{proposition}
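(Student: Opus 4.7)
The plan is to establish both inequalities by the standard second-order energy procedure: apply the relevant operator ($\partial_{11}$ for \eqref{laplacian_1}, $\partial_{12}$ for \eqref{laplacian_2}) to \eqref{IPM_theta}, pair against $\partial_{11}\theta$ (resp.\ $\partial_{12}\theta$), integrate over $\Omega$, and split into a linear contribution (from the $-\partial_2\rho_s u_2$ source) and a nonlinear contribution (from $u\cdot\nabla\theta$). The dissipation should emerge from the linear term after exploiting the stream-function identities $u_2=\partial_1\Psi$ and $-\Delta\Psi=\partial_1\theta$, while the nonlinear transport piece will be handled by the anisotropic commutator estimates of Lemma~\ref{Nonlinear_estimate_lem1} together with incompressibility and Sobolev embedding.

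For \eqref{laplacian_1}, writing $\partial_{11}\theta=-\partial_{111}\Psi-\partial_{122}\Psi$ and $\partial_{11}u_2=\partial_{111}\Psi$ and performing successive $x_2$- and $x_1$-integrations by parts (using that $\partial_2\rho_s$ is $x_1$-independent), the linear term rewrites as $\int\partial_2\rho_s\bigl((\partial_{111}\Psi)^2+(\partial_{112}\Psi)^2\bigr)dx-\int\partial_{22}\rho_s\,\partial_{111}\Psi\,\partial_{12}\Psi\,dx$. The first piece is $\le-\gamma\|\partial_{11}\nabla\Psi\|_{L^2}^2$, which in Fourier (using $\widehat{u_2}=in\widehat\Psi$) is equivalent to $\|\partial_1|D|^{-1}u_2\|_{\dot H^2}^2$. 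An additional $x_1$-IBP reduces the remainder to $\tfrac12\int\partial_{222}\rho_s(\partial_1 u_2)^2\,dx\le C\|\partial_1 u_2\|_{L^2}^2$, and the Fourier Poincaré split $\|\partial_1u_2\|_{L^2}^2\le\eta\|\partial_{11}\nabla\Psi\|_{L^2}^2+C_\eta\|u_2\|_{L^2}^2$ (valid since $u_2$ has zero horizontal mean) absorbs the $\eta$-part into the dissipation and upgrades $\dot H^2$ to the full $H^2$, producing the net bound $-C\|\partial_1|D|^{-1}u_2\|_{H^2}^2+C\|u_2\|_{L^2}^2$. For the nonlinear side, expand $\partial_{11}(u\cdot\nabla\theta)=u\cdot\nabla\partial_{11}\theta+2\partial_1u\cdot\nabla\partial_1\theta+\partial_{11}u\cdot\nabla\theta$: the first vanishes by $\nabla\cdot u=0$; the identity $\partial_1u_1=-\partial_2u_2$ converts $2\int(\partial_1u_1)(\partial_{11}\theta)^2\,dx$ into the $\|\partial_2u_2\|_{L^\infty}\|\partial_{11}\theta\|_{L^2}^2$ contribution; the combination of $\int\partial_1u_2\,\partial_{12}\theta\,\partial_{11}\theta\,dx$ and $\int\partial_{11}u_2\,\partial_2\theta\,\partial_{11}\theta\,dx$ reduces via an $x_1$-IBP to $\int\partial_1u_2\,\partial_2\theta\,\partial_{111}\theta\,dx$ plus absorbable lower orders, and \eqref{non_lin_es_2} bounds this by $C\|\partial_1|D|^{-1}u_2\|_{H^2}^2\|\theta\|_{H^k}$, absorbable into the dissipation by $\|\theta\|_{H^k}<\delta_0$; the remaining pieces involving $\partial_{11}u_1$ are controlled by Sobolev embedding and Young's inequality and collapse to $C\|u\|_{H^k}^2\|\partial_{11}\theta\|_{L^2}^2$.

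The proof of \eqref{laplacian_2} runs in parallel after applying $\partial_{12}$. The analogous IBP yields principal dissipation $\le-\gamma(\|\partial_{112}\Psi\|_{L^2}^2+\|\partial_{122}\Psi\|_{L^2}^2)=-\gamma(\|\partial_{12}u_2\|_{L^2}^2+\|\partial_{22}u_2\|_{L^2}^2)$, which captures only the $\xi$-weighted part of $\|u_2\|_{\dot H^2}^2$; the missing $\|\partial_{11}u_2\|_{L^2}^2$ is recovered by adding and subtracting, producing the clean dissipation $-C\|u_2\|_{H^2}^2$ at the cost of the additive error $+C\|\partial_1|D|^{-1}u_2\|_{H^2}^2$ on the right side of \eqref{laplacian_2}. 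The remaining linear commutators (from the $\partial_{22}\rho_s\,\partial_1u_2$ source and from the cross-$x_2$-IBPs) are treated by Cauchy--Schwarz combined with $\partial_{22}\Psi=-\partial_{11}\Psi-\partial_1\theta$, producing the mixed product $C\|\partial_1|D|^{-1}u_2\|_{H^2}\|\partial_{12}\theta\|_{L^2}\|\theta\|_{H^k}$. The nonlinear analysis is structurally identical to that of \eqref{laplacian_1}, now invoking \eqref{non_lin_es_1} (after rewriting the top-order $\partial_{12}$ contributions through the $|D_2|$-machinery in Fourier) and using incompressibility to extract the $\|\partial_2u_2\|_{L^\infty}\|\partial_{12}\theta\|_{L^2}^2$ coefficient. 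The principal technical obstacle throughout is that a naive energy estimate produces only the weaker dissipation $\|\partial_{11}u_2\|_{L^2}^2$ (resp.\ $\|(\partial_{12},\partial_{22})u_2\|_{L^2}^2$); upgrading to the intended norms $\|\partial_1|D|^{-1}u_2\|_{H^2}^2$ and $\|u_2\|_{H^2}^2$ requires the delicate $x_1$-IBP reduction $\int\partial_{22}\rho_s\,\partial_{111}\Psi\,\partial_{12}\Psi\,dx=\tfrac12\int\partial_{222}\rho_s(\partial_1u_2)^2\,dx$ followed by the Fourier Poincaré split, while on the nonlinear side a direct Kato--Ponce bound on $\int\partial_{11}u_2\,\partial_2\theta\,\partial_{11}\theta\,dx$ would cost the forbidden $\|\nabla u\|_{L^\infty}$ factor, and only the tailored estimate \eqref{non_lin_es_2} manages to bypass this.
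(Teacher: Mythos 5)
Your overall strategy matches the paper's, and your treatment of \eqref{laplacian_1} is essentially correct. You do differ slightly in the linear part: you integrate by parts once more to reduce the commutator $\int\partial_{22}\rho_s\,\partial_{11}\Psi\,\partial_{112}\Psi\,dx$ to $-\tfrac12\int\partial_{222}\rho_s(\partial_1u_2)^2\,dx$ (note: this is an $x_2$-IBP, not an $x_1$-IBP, and you dropped a sign), whereas the paper bounds $\|\partial_{22}\rho_s\partial_{11}\Psi\|_{L^2}$ directly by Cauchy--Schwarz and interpolation; either route works. The nonlinear part of \eqref{laplacian_1} -- vanishing of $u\cdot\nabla\partial_{11}\theta$, the $\partial_1u_1=-\partial_2u_2$ device, the $x_1$-IBP reduction to $\int\partial_1u_2\,\partial_2\theta\,\partial_{111}\theta\,dx$, and \eqref{non_lin_es_2} -- matches the paper.

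The gap is in your treatment of the nonlinear term for \eqref{laplacian_2}. Lemma~\ref{Nonlinear_estimate_lem1}\,\eqref{non_lin_es_1} is a commutator estimate for the fractional operator $|D_2|^k$, and there is no direct way to re-express the $\partial_{12}$ commutator through that ``$|D_2|$-machinery'': $\partial_{12}$ has the mixed symbol $-n\xi$. But the deeper problem is that \eqref{non_lin_es_1} returns the bound $\|\nabla u_2\|_{L^\infty}\|\theta\|_{H^k}^2+\|u\|_{H^k}^2\|\theta\|_{H^k}$, in which $\|\theta\|_{H^k}$ is a non-decaying, $O(1)$ quantity. The entire point of \eqref{laplacian_2} is to keep the slow part weighted by $\|\partial_{12}\theta\|_{L^2}^2=H_2(t)$ and the source $g_2(t)$ weighted by $\|u_2\|_{H^2}^2\|\theta\|_{H^k}$ and $\|\partial_1|D|^{-1}u_2\|_{H^2}\|\partial_{12}\theta\|_{L^2}\|\theta\|_{H^k}$, all of which decay fast enough ($g_2\lesssim t^{-k}$) for the bootstrap of Proposition~\ref{u_2high1} to close. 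A bound involving $\|\nabla u_2\|_{L^\infty}\|\theta\|_{H^k}^2$ decays only like $\|\nabla u_2\|_{L^\infty}\sim t^{-1-\kappa}$ and would wreck that scheme. What the paper actually does with $B_2=\int\bigl(\partial_{12}(u_2\partial_2\theta)-u_2\partial_{12}\partial_2\theta\bigr)\partial_{12}\theta\,dx$ is integrate the dangerous piece $\int\partial_{12}u_2\,\partial_2\theta\,\partial_{12}\theta\,dx$ by parts in $x_1$ and then in $x_2$, producing $-\int\partial_2u_2(\partial_{12}\theta)^2+\int\partial_{22}u_2\,\partial_2\theta\,\partial_{11}\theta+\int\partial_2u_2\,\partial_{22}\theta\,\partial_{11}\theta$, so that every remaining term carries a factor of $\partial_{11}\theta$; the identity $\|\partial_{11}\theta\|_{L^2}=\|\partial_1\Delta\Psi\|_{L^2}\le\|u_2\|_{H^2}$ then delivers the correct decaying weight, and an anisotropic H\"older with $(3q-2)/q\le k$ closes the estimate without ever invoking $\|\nabla u\|_{L^\infty}$. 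Finally, the mixed product $\|\partial_1|D|^{-1}u_2\|_{H^2}\|\partial_{12}\theta\|_{L^2}\|\theta\|_{H^k}$ in \eqref{laplacian_2} is misattributed to the linear commutator in your sketch; it actually arises from the nonlinear piece $\int\partial_1u_2\,\partial_{22}\theta\,\partial_{12}\theta\,dx$ after H\"older, whereas the linear side of the $\partial_{12}$ estimate closes as $-\|u_2\|_{H^2}^2 + C\bigl(\|\partial_1|D|^{-1}u_2\|_{H^2}^2+\|u\|_{L^2}^2\bigr)$ with no such mixed product.
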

\begin{proof}
Let $\mathcal{D}$ denote either $\partial_{12}$ or $\partial_{11}$. Using \eqref{IPM_theta}, we compute
\begin{align}\label{energe123}
\frac{1}2\frac{d}{dt}\rVert\mathcal{D} \theta\rVert_{L^2}^2 = -\int\mathcal{D} (u\cdot\nabla \theta)\mathcal{D}\theta dx +\int \mathcal{D}\left(-\partial_2\rho_su_2\right)\mathcal{D}\theta dx.
\end{align}
The second term \textcolor{black}{on} the right-hand side can be estimated, using $u_2=\partial_1\Psi$, as
\begin{align}\label{no_sacrifices_elton_john}
\int \mathcal{D}\left(-\partial_2\rho_su_2\right)\mathcal{D}\theta dx &= \int \partial_1\mathcal{D}(-\partial_2\rho_s \Psi)\mathcal{D}\theta dx\nonumber\\
& =-\int \mathcal{D}(-\partial_2\rho_s \Psi)\mathcal{D} (-\Delta\Psi) dx\nonumber\\
& = -\int \nabla\mathcal{D}(-\partial_2\rho_s \Psi) \cdot \mathcal{D}\nabla \Psi dx\nonumber\\
& =  -\int (-\partial_2\rho_s) |\nabla \mathcal{D}\Psi|^2 dx + \int \left( \nabla \mathcal{D}(\partial_2\rho_s\Psi) - \partial_2\rho_s\nabla\mathcal{D}\Psi \right)\cdot \mathcal{D}\nabla \Psi dx\nonumber\\
&\le - C\int  |\nabla \mathcal{D}\Psi|^2 dx +\int \left( \nabla \mathcal{D}(\partial_2\rho_s\Psi) - \partial_2\rho_s\nabla\mathcal{D}\Psi \right)\cdot \mathcal{D}\nabla \Psi dx\nonumber \\
& \le - C\int  |\nabla \mathcal{D}\Psi|^2 dx +  C\Big\rVert \nabla \mathcal{D}(\partial_2\rho_s\Psi) - \partial_2\rho_s\nabla\mathcal{D}\Psi \Big\rVert_{L^2}^2\nonumber\\
&=:-A_1+A_2.
\end{align}
where the last inequality follows from the Cauchy--Schwarz inequality.
For the term $A_2$, it is straightforward to see that
\begin{align*}
\nabla \mathcal{D}(\partial_2\rho_s\Psi) - \partial_2\rho_s\nabla\mathcal{D}\Psi =\begin{cases} \colvec{0 \\ \partial_{22}\rho_s\partial_{11}\Psi} & \text{ if $\mathcal{D}=\partial_{11}$,}\\
\colvec{\partial_{22}\rho_s\partial_{11}\Psi \\ \partial_{222}\rho_s\partial_1\Psi +2\partial_{22}\rho_s\partial_{12}\Psi} & \text{ if $\mathcal{D}=\partial_{12}$.}
\end{cases}
\end{align*}
Using the Sobolev interpolation theorem and the estimates for $\partial_2\rho_s$ in \eqref{rkskd2sd}, we can find
\begin{align*}
\rVert \partial_{22}\rho_s\partial_{11}\Psi\rVert_{L^2}&\le_C\rVert\partial_{1}u_2\rVert_{L^2}=\Big\rVert \partial_1|D|^{-1}u_2\Big\rVert_{H^1}\le C_\eta\rVert u_2\rVert_{L^2} + \eta\Big\rVert \partial_1|D|^{-1}u_2\Big\rVert_{H^2},\text{ for $\eta>0$,}\\
\rVert \partial_{222}\rho_s\partial_1\Psi +2\partial_{22}\rho_s\partial_{12}\Psi\rVert_{L^2}&\le_C\rVert \partial_{1}\Psi\rVert_{H^1}\le \rVert u\rVert_{H^1}\le C_\eta \rVert u\rVert_{L^2} + \eta\rVert u\rVert_{H^2} \text{ for $\eta>0$}.
\end{align*}
Hence,
\begin{align}\label{rocket_man}
{|A_2|}\le  \begin{cases}
C_\eta\rVert u_2\rVert_{L^2}^2 + \eta\rVert \partial_1|D|^{-1}u_2\rVert_{H^2}^2 & \text{ if $\mathcal{D}=\partial_{11}$,}\\
C_\eta \rVert u\rVert_{L^2}^2 + \eta\rVert u\rVert_{H^2}^2  & \text{ if $\mathcal{D}=\partial_{12}$,}
\end{cases}
\end{align}
for any $\eta>0$.
On the other hand,  $A_1$ in \eqref{no_sacrifices_elton_john} can be estimated as \begin{align}\label{Daftpunk}
\rVert \nabla \mathcal{D}\Psi\rVert_{L^2}\ge \begin{cases}
\rVert \nabla \partial_{1}u_2\rVert_{L^2}\ge \Big\rVert \partial_1|D|^{-1}u_2\Big\rVert_{\dot{H}^2}& \text{ if $\mathcal{D}=\partial_{11}$,}\\
\rVert \nabla \partial_{2}u_2\rVert_{L^2} \ge \Big\rVert |D|^2 u_2\Big\rVert_{L^2}-\rVert \nabla \partial_{1}u_2\rVert_{L^2}\ge \rVert u_2\rVert_{H^2}-\Big\rVert \partial_1|D|^{-1}u_2\Big\rVert_{{H}^2} - \rVert u\rVert_{L^2}&\text{ if $\mathcal{D}=\partial_{12}$}\textcolor{black}{.}
\end{cases}
\end{align}
\textcolor{black}{This implies}
\begin{align}\label{A22Poo}
{A_1} \ge \begin{cases}
C\rVert \partial_1|D|^{-1}u_2\rVert_{{H}^2}^2& \text{ if $\mathcal{D}=\partial_{11}$,}\\
C\rVert u_2\rVert_{H^2}^2-C\left(\Big\rVert \partial_1|D|^{-1}u_2\Big\rVert_{{H}^2}^2 +\rVert u\rVert_{L^2}^2\right)&\text{ if $\mathcal{D}=\partial_{12}$}.
\end{cases}
\end{align}
Plugging this and \eqref{rocket_man} into \eqref{no_sacrifices_elton_john} with sufficiently small $\eta>0$, we obtain
\begin{align}\label{Amy_macdonald}
\int \mathcal{D}\left(-\partial_2\rho_su_2\right)\mathcal{D}\theta dx \le_C \begin{cases}
- \Big\rVert \partial_1|D|^{-1}u_2\Big\rVert_{{H}^2}^2 + C\rVert u_2\rVert_{L^2}^2 & \text{ if $\mathcal{D}=\partial_{11}$,}\\
 -\rVert u_2\rVert_{H^2}^2 + C\left(\Big\rVert \partial_1|D|^{-1}u_2\Big\rVert_{{H}^2}^2 +\rVert u\rVert_{L^2}^2\right) &\text{ if $\mathcal{D}=\partial_{12}$,}
\end{cases}
\end{align}
which gives upper bounds for the last integral in \eqref{energe123}.

  The nonlinear term in \eqref{energe123} can be written as 
\[
\int\mathcal{D} (u\cdot\nabla \theta)\mathcal{D}\theta dx = \int\left(\mathcal{D} (u\cdot\nabla \theta)- u\nabla \mathcal{D}\theta\right)\mathcal{D}\theta dx,
\]
due to the incompressibility of $u$.
It is \textcolor{black}{clear }that
\[
\mathcal{D} (u\cdot\nabla \theta)- u\nabla \mathcal{D}\theta = \left(\mathcal{D}(u_1\partial_1\theta) - u_1\mathcal{D}\partial_1\theta\right) + \left(\mathcal{D}(u_2\partial_2\theta) - u_2\mathcal{D}\partial_2\theta\right),
\]
\textcolor{black}{thus}
\begin{align}\label{D_B12_estimate_Wann_ist_Uhr}
\int\mathcal{D} (u\cdot\nabla \theta)\mathcal{D}\theta dx  = B_1+B_2,
\end{align}
where
\begin{align}\label{Georgia_mordo_mynameis}
B_1:=\int \left(\mathcal{D}(u_1\partial_1\theta) - u_1\mathcal{D}\partial_1\theta\right)\mathcal{D}\theta dx,\quad  B_2:=\int \left(\mathcal{D}(u_2\partial_2\theta) - u_2\mathcal{D}\partial_2\theta\right)\mathcal{D}\theta dx.
\end{align}
 We will estimate $B_1$ and $B_2$ when $\mathcal{D}=\partial_{11}$ or $\partial_{12}$ separately.
 
 \textbf{Estimate for $B_1$ when $\mathcal{D}=\partial_{11}$.}
In this case, $B_1$ reads 
\[
B_1 = \int (\partial_{11}u_1\partial_1\theta + 2\partial_1u_1\partial_{11}\theta)\partial_{11}\theta dx.
\]
Hence the Cauchy--Schwarz inequality gives us
\begin{align}
|B_1|&\le \rVert \partial_{11}\theta\rVert_{L^2}\left(\rVert \partial_{11}u_1\rVert_{L^2}\rVert \partial_1\theta\rVert_{L^\infty} +\rVert \partial_1u_1\rVert_{L^\infty}\rVert\partial_{11}\theta\rVert_{L^2} \right)\nonumber\\
&\le \eta \rVert \partial_{11}u_1\rVert_{L^2}^2 + C_\eta \left(\rVert \partial_{11}\theta\rVert_{L^2}\rVert\partial_1\theta\rVert_{L^\infty} \right)^2 + \rVert \partial_1u_1\rVert_{L^\infty}\rVert \partial_{11}\theta\rVert_{L^2}^2 \text{ for any $\eta>0$}.\label{emotion_1}
\end{align}
Using $u=\nabla^\perp\Psi$ and the Sobolev embedding theorem gives us
\begin{align}
\rVert \partial_{11}u_1\rVert_{L^2}&=\rVert \partial_{12}u_2\rVert_{L^2}\le_C \rVert \partial_1|D|^{-1}u_2\rVert_{H^2},\label{estimate_nonlinear_1}\\
\rVert\partial_1\theta\rVert_{L^\infty}&=\rVert\Delta \Psi\rVert_{L^\infty}\le_C \rVert \Delta\Psi\rVert_{H^{k-1}}=\rVert \nabla \Psi\rVert_{H^k}=\rVert u\rVert_{H^k},\label{estimate_nonlinear_2}\\
\rVert \partial_1u_1\rVert_{L^\infty}&=\rVert \partial_2u_2\rVert_{L^\infty}\label{estimate_nonlinear_3}
\end{align}
Therefore, $B_1$ in \eqref{emotion_1} can be estimated, again by using the Cauchy--Schwarz inequality,
\begin{align}\label{My_name_is_Giovanni_Georgio}
|B_1|\le \eta \Big\rVert \partial_1|D|^{-1}u_2\Big\rVert_{H^2}^2 +\left(C_\eta \rVert u\rVert_{H^k}^2 + C\rVert \partial_2u_2\rVert_{L^\infty}\right)\rVert \partial_{11}\theta\rVert_{L^2}^2,\text{ for $\eta>0$, $\mathcal{D}=\partial_{11}$.}
\end{align}

 \textbf{Estimate for $B_1$ when $\mathcal{D}=\partial_{12}$.}
 When $\mathcal{D}=\partial_{12}$, $B_1$ in \eqref{Georgia_mordo_mynameis} reads
  \[
 B_1 = \int \partial_{12}\theta\left(\partial_{12}u_1\partial_1\theta + \partial_1u_1\partial_{12}\theta+\partial_2u_1\partial_{11}\theta\right)dx.
 \]
Hence, the Cauchy--Schwarz inequality yields
\begin{align}\label{Wana}
|B_1|&\le \rVert\partial_{12}\theta\rVert_{L^2}\left(\rVert \partial_{12}u_1\rVert_{L^2}\rVert\partial_1\theta\rVert_{L^\infty}+\rVert \partial_{1}u_1\rVert_{L^\infty}\rVert \partial_{12}\theta\rVert_{L^2}+\rVert \partial_2u_1\rVert_{L^\infty}\rVert\partial_{11}\theta\rVert_{L^2}\right).
\end{align}
As before,  $u=\nabla^\perp\Psi$ and the Sobolev embedding theorem tell us
\begin{align}
\rVert \partial_{12}u_1\rVert_{L^2}&=\rVert \partial_{22}u_2\rVert_{L^2}=\rVert u_2\rVert_{H^2},\label{bused_1}\\
\rVert \partial_2u_1\rVert_{L^\infty}&\le_C \rVert u\rVert_{H^k}\label{bused_4}\\
 \rVert \partial_{11}\theta\rVert_{L^2}&=\rVert \partial_1\Delta\Psi\rVert_{L^2}\le\rVert u_2\rVert_{H^2},\label{bused_5}
\end{align}
Plugging these estimates with \eqref{estimate_nonlinear_2} and \eqref{estimate_nonlinear_3} into \eqref{Wana}, we get
\begin{align*}
|B_{1}|&\le_C \rVert \partial_{12}\theta\rVert_{L^2}\left(\rVert u_2\rVert_{H^2}\rVert u\rVert_{H^k} + \rVert \partial_2u_2\rVert_{L^\infty}\rVert\partial_{12}\theta\rVert_{L^2}\right).
\end{align*}
Hence, applying the Cauchy--Schwarz inequality, we derive
\begin{align}\label{B_11estimate_3}
|B_1|\le \eta\rVert u_2\rVert_{H^2}^2 + \left(C_\eta\rVert u\rVert_{H^k}^2 + C\rVert \partial_2u_2\rVert_{L^\infty}\right)\rVert \partial_{12}\theta\rVert_{L^2}^2,\text{ for $\eta>0$ and $\mathcal{D}=\partial_{12}$}
\end{align}

 \textbf{Estimate for $B_2$ when $\mathcal{D}=\partial_{11}$.}
  When $\mathcal{D}=\partial_{11}$, $B_2$ in \eqref{Georgia_mordo_mynameis} reads
  \begin{align}\label{bsplite}
  B_2 &= \int \partial_{11}u_2\partial_2\theta\partial_{11}\theta dx  + 2\int \partial_1u_2\partial_{12}\theta\partial_{11}\theta dx\nonumber\\
  & =-\int \partial_1u_2\partial_2\theta \partial_{111}\theta dx  +  \int \partial_1 u_2 \partial_{12}\theta \partial_{11}\theta dx =: - B_{21}+B_{22} \end{align}
  We estimate $B_{22}$ first. It can be  estimated  as
  \begin{align*}
  |B_{22}|\le \rVert \partial_{11}\theta\rVert_{L^2}\rVert \partial_{1}u_2 \partial_{12}\theta\rVert_{L^2}\le \rVert \partial_{11}\theta\rVert_{L^2} \rVert \partial_1u_2\rVert_{L^p}\rVert \partial_{12}\theta\rVert_{L^q},\text{ for $p^{-1}+q^{-1}=1/2$.}
  \end{align*}
  We choose $q$ so that $\rVert \partial_{12}\theta\rVert_{L^q}$ can be bounded by $\rVert u\rVert_{H^k}$. To do so, we choose 
  \[
  q:=\begin{cases}
  \frac{2}{3-k}&\text{ if $k\le 5/2$,}\\
  4 & \text{ if $k> 5/2$.}
  \end{cases}
  \]
  Then, it is elementary to see that $\frac{3q-2}q\le k$. Hence, the usual Sobolev embedding gives us
  \begin{align*}
  \rVert \partial_{12}\theta\rVert_{L^q}=\rVert \partial_2\Delta\Psi\rVert_{L^q}\le_C\rVert u\rVert_{W^{2,q}}\le_C \rVert u\rVert_{H^{\frac{3q-2}q}}\le_C \rVert u\rVert_{H^k}.
  \end{align*}
  On the other hand, since $k>2$, it holds that $q>2$, hence $p<\infty$. Thus, again the Soboelv embedding yields
  \[
  \rVert \partial_1u_2\rVert_{L^p}=\Big\rVert \partial_1|D|^{-1} u_2\Big\rVert_{W^{1,p}}\le_C \Big\rVert \partial_1|D|^{-1}u_2\Big\rVert_{H^2}.
   \]
  Consequently, we obtain
  \begin{align}\label{mine_name_ist_Giovanni}
  |B_{22}|\le_C \rVert \partial_{11}\theta\rVert_{L^2}\rVert u\rVert_{H^k}\Big\rVert \partial_1|D|^{-1}u_2\Big\rVert_{H^2}\le \eta\Big\rVert \partial_1|D|^{-1}u_2\Big\rVert_{H^2}^2 + C_\eta \rVert u\rVert_{H^k}^2\rVert\partial_{11}\theta\rVert_{L^2}^2,\text{ for $\eta>0$.}
  \end{align}
  A necessary estimate for $B_{21}$ was already derived in Lemma~\ref{Nonlinear_estimate_lem1}, namely,
  \[
  |B_{21}|\le C\Big\rVert \partial_1|D|^{-1}u_2\Big\rVert_{H^2}^2\rVert\theta\rVert_{H^k}.
  \]
 Combining this with \eqref{mine_name_ist_Giovanni}, we prove that $B_2$ can be estimated as
 \begin{align}\label{B1espapr}
 |B_2|\le (\eta + C\rVert \theta\rVert_{H^k})\Big\rVert \partial_1|D|^{-1}u_2\Big\rVert_{H^2}^2 + C_\eta \rVert u\rVert_{H^k}^2\rVert \partial_{11}\theta\rVert_{L^2}^2,\text{ for $\eta>0$, $\mathcal{D}=\partial_{11}$.}
 \end{align}

 \textbf{Estimate for $B_2$ when $\mathcal{D}=\partial_{12}$.}
 In this case, $B_2$ in \eqref{Georgia_mordo_mynameis} can be written as
 \begin{align}\label{B_2grading}
 B_2= \int \left( \partial_{12}u_2\partial_2\theta + \partial_1u_2\partial_{22}\theta + \partial_2u_2\partial_{12}\theta \right)\partial_{12}\theta dx.
  \end{align}
The contribution from $\partial_{12}u_2$ can be further computed by integration by parts as
 \begin{align*}
 \int \partial_{12}u_2\partial_2\theta\partial_{12}\theta dx &= - \int \partial_2u_2 (\partial_{12}\theta)^2dx -\int \partial_{2}u_2\partial_2\theta \partial_{112}\theta dx\\
 & = - \int \partial_2u_2 (\partial_{12}\theta)^2dx + \int \partial_{22}u_2\partial_2\theta \partial_{11}\theta dx + \int \partial_2 u_2\partial_{22}\theta\partial_{11}\theta dx.
 \end{align*}
 \textcolor{black}{Inserting the above equation} into \eqref{B_2grading}, we have
 \[
 B_2 = \int \partial_1 u_2\partial_{22}\theta\partial_{12}\theta dx+\int \partial_{22} u_2\partial_{2}\theta\partial_{11}\theta dx+\int \partial_2 u_2\partial_{22}\theta\partial_{11}\theta dx=:B_{21}+B_{22}+B_{23}.
 \]
 We estimate $B_{22}$ first and then derive necessary upper bounds for $B_{21}$ and $B_{23}$. For $B_{22}$, we use \eqref{bused_5} and obtain
 \begin{align}\label{B22estimate_easy}
 |B_{22}|\le \rVert \partial_2\theta\rVert_{L^\infty}\rVert\partial_{22}u_2\rVert_{L^2}\rVert \partial_{11}\theta\rVert_{L^2}\le \rVert \partial_2\theta\rVert_{L^\infty}\rVert u_2\rVert_{H^2}^2 \le \rVert \theta\rVert_{H^k}\rVert u_2\rVert_{H^2}^2.
 \end{align}
 For $B_{23}$, again \eqref{bused_5} gives us
 \begin{align}\label{B_23halfway}
 |B_{23}|\le\rVert \partial_2u_2\partial_{22}\theta\rVert_{L^2}\rVert \partial_{11}\theta\rVert_{L^2}\le \rVert \partial_2u_2\partial_{22}\theta\rVert_{L^2}\rVert u_2\rVert_{H^2}\le \rVert \partial_2u_2\rVert_{L^p}\rVert \partial_{22}\theta\rVert_{L^q}\rVert u_2\rVert_{H^2},
 \end{align}
 where the last inequality follows from the H\"older inequality with the following choice of $p,q$:
 \begin{align}\label{pq_always_same}
   q:=\begin{cases}
  \frac{2}{3-k},&\text{ if $k\le 5/2$,}\\
  4 & \text{ if $k> 5/2$.}
  \end{cases},\quad p^{-1}+q^{-1}=1/2.
 \end{align}
 As we saw before, this choice of $p,q$ gives us
 \begin{align}\label{same_qchoice}
 \frac{3q-2}{q}\le k,\quad p<\infty.
 \end{align}
 Hence, the Sobolev embedding theorems tell us
 \begin{align}\label{recycle_partial_22}
 \rVert \partial_2u_2\rVert_{L^p}\le_C \rVert \partial_2u_2\rVert_{H^1}\le \rVert u_2\rVert_{H^2},\quad \rVert \partial_{22}\theta\rVert_{L^q}\le_C \rVert \theta\rVert_{W^{2,q}}\le_C \rVert \theta\rVert_{H^{\frac{3q-2}{q}}}\le_C\rVert \theta\rVert_{H^k}.
 \end{align}
 With these estimates, we read \eqref{B_23halfway} as
 \begin{align}\label{b32Hundert_eins}
 |B_{23}|\le C \rVert u_2\rVert_{H^2}^2 \rVert \theta\rVert_{H^k}.
 \end{align}
 For $B_{21}$, we estimate it in the same manner, that is, with the same choice of $p,q$ as in \eqref{pq_always_same},
 \begin{align}\label{B_21david_bowie_overrrated}
 |B_{21}|\le \rVert \partial_1u_2\rVert_{L^p}\rVert \partial_{22}\theta\rVert_{L^q}\rVert\partial_{12}\theta\rVert_{L^2}.
 \end{align}
 Again the Soboelv embedding theorems give us (since $p<\infty$)
 \begin{align*}
 \rVert \partial_1u_2\rVert_{L^p} &=\rVert \partial_1u_2\rVert_{H^1}\le \Big\rVert \partial_1|D|^{-1}u_2\Big\rVert_{H^2}.
 \end{align*}
Plugging this and the estimate for $\rVert\partial_{22}\theta\rVert_{L^q}$ in  \eqref{recycle_partial_22} into \eqref{B_21david_bowie_overrrated}, we obtain
\[
|B_{21}|\le \Big\rVert \partial_1|D|^{-1}u_2\Big\rVert_{H^2}\rVert\theta\rVert_{H^k}\rVert\partial_{12}\theta\rVert_{L^2}.
\]
Combining this with \eqref{b32Hundert_eins} and \eqref{B22estimate_easy}, we arrive at 
\begin{align}\label{last_B2}
|B_2|\le_C \rVert \theta\rVert_{H^k}\rVert u_2\rVert_{H^2}^2 + \Big\rVert \partial_1|D|^{-1}u_2\Big\rVert_{H^2}\rVert\partial_{12}\theta\rVert_{L^2}\rVert \theta\rVert_{H^k},\text{ when $\mathcal{D}=\partial_{12}$}.
\end{align}

 Now, we collect all the estimates. From \eqref{My_name_is_Giovanni_Georgio} and \eqref{B1espapr}, the nonlinear term \eqref{D_B12_estimate_Wann_ist_Uhr} can be bounded as 
 \begin{align}\label{nonlinear_123_emma}
&\left| \int\mathcal{D} (u\cdot\nabla \theta)\mathcal{D}\theta dx\right| \nonumber\\
 & \quad \le \left(C\rVert \theta\rVert_{H^k} + \eta \right)\Big\rVert \partial_1|D|^{-1}u_2\Big\rVert_{H^2}^2 +\left(C_\eta \rVert u\rVert_{H^k}^2 +C \rVert \partial_2u_2\rVert_{L^\infty}\right)\rVert \partial_{11}\theta\rVert_{L^2}^2,\text{ for $\eta>0$ when $\mathcal{D}=\partial_{11}$.}
 \end{align}
 Choosing $\eta$ sufficiently small and combining it with the estimate in \eqref{Amy_macdonald} when $\mathcal{D}=\partial_{11}$, 
 the equation in \eqref{energe123} gives \eqref{laplacian_1}.

 Similarly, from \eqref{B_11estimate_3} and \eqref{last_B2}, the nonlinear term \eqref{D_B12_estimate_Wann_ist_Uhr} can be bounded as 
 \begin{align}\label{nonlinear_1234_emma}
\left| \int\mathcal{D} (u\cdot\nabla \theta)\mathcal{D}\theta dx\right| &\le \left(C\rVert \theta\rVert_{H^k}+\eta\right)\rVert u_2\rVert_{H^2}^2 + \left(C_\eta\rVert u\rVert_{H^k}^2 + C\rVert \partial_2u_2\rVert_{L^\infty}\right)\rVert \partial_{12}\theta\rVert_{L^2}^2\nonumber\\
 & \ + C\Big\rVert \partial_1|D|^{-1}u_2\Big\rVert_{H^2}\rVert\partial_{12}\theta\rVert_{L^2}\rVert \theta\rVert_{H^k}\text{ for $\eta>0$ and $\mathcal{D}=\partial_{12}$}
 \end{align}
 Choosing $\eta$ sufficiently small and combining it with the estimate in \eqref{Amy_macdonald} when $\mathcal{D}=\partial_{12}$, 
 the equation in \eqref{energe123} gives \eqref{laplacian_2}.
\end{proof}

   \section{Energy structure analysis}
 Throughout the section, we will assume that  $\rho(t)$ is a smooth solution to the IPM equation satisfying
\begin{align}\label{assumption_delta}
 \rVert \rho(t)-\rho_s\rVert_{H^k}^2+  \int_0^T \rVert u(t)\rVert_{H^k}^2 dt=: \delta\le \delta_0, \text{ for $t\in[0,T]$ for some $T>1$ and $\delta_0\ll 1$.}
 \end{align}
 We denote
 \begin{align}
E(t):=\mathcal{E}(\rho(t)),
 \end{align}
 where $\mathcal{E}$ is  the potential energy defined in \eqref{potential_e}. 
 
 \begin{lemma}\label{energy_3x21}
 There exist $\delta_0>0$  such that if $\rho_0-\rho_s\in C^\infty_{c}(\Omega)$ and  \eqref{assumption_delta} holds, then 
 \[
 \frac{d}{dt}E(t)=-\rVert u(t)\rVert_{L^2}^2.
 \]
 \end{lemma}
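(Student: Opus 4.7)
The plan is to compute $\frac{d}{dt}E_s(\rho(t))$ for each cutoff $s>0$ and then pass to the limit $s\to\infty$. Let $\Phi_t$ denote the Lagrangian flow of $u$, so that $\partial_t\Phi_t = u(t,\Phi_t)$, $\Phi_0=\mathrm{id}$, $\det D\Phi_t\equiv 1$ by incompressibility, and $\rho(t,\Phi_t(x))=\rho_0(x)$. Since $\Phi_t$ is volume-preserving, the moving domain $\{-s<\rho(t)<s\}$ is the $\Phi_t$-image of the fixed set $A_s:=\{-s<\rho_0<s\}$, so a change of variables gives the Lagrangian form
\[
E_s(\rho(t))=\int_{A_s}\rho_0(x)\,\Phi_t(x)_2\,dx.
\]
Differentiating under the now time-independent domain and changing variables back yields the clean identity
\[
\frac{d}{dt}E_s(\rho(t)) = \int_{A_s}\rho_0(x)\,u_2(t,\Phi_t(x))\,dx = \int_{\{-s<\rho(t)<s\}}\rho(t,y)\,u_2(t,y)\,dy,
\]
with no boundary contribution, since the moving boundary $\{\rho(t)=\pm s\}$ is itself carried by the flow and the Reynolds boundary term cancels the transport term.

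Next, I would note that the measure-preserving rearrangement is an invariant of volume-preserving transport: because $\Phi_t$ is area-preserving, $\rho(t)$ and $\rho_0$ have the same distribution, which (together with the decay guaranteed by the $C_c^\infty$ hypothesis and Proposition~\ref{lwp_IPM}) characterizes the stratification uniquely via the $\phi_0$ construction of Lemma~\ref{rearrangement_lem}. Hence $\rho(t)^*=\rho_0^*$ for all $t$, so that $E_s(\rho(t)^*)=E_s(\rho_0^*)$ is constant in $t$, and
\[
\frac{d}{dt}\bigl(E_s(\rho(t))-E_s(\rho(t)^*)\bigr) = \int_{\{-s<\rho(t)<s\}}\rho\, u_2\,dy.
\]

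To pass to the limit $s\to\infty$, I split $\rho = \theta+\rho_s$. Since $u_2=\partial_1\Psi$ has zero horizontal mean, $\int_\Omega\rho_s(x_2)\,u_2\,dx=0$, leaving only the $\theta u_2$ contribution. The decay bounds $\|\theta(t)x_2\|_{L^\infty}+\|u_2(t)x_2\|_{L^\infty}<\infty$ from Proposition~\ref{lwp_IPM} supply a pointwise majorant for $\theta u_2$ that is integrable near infinity; by dominated convergence,
\[
\lim_{s\to\infty}\int_{\{-s<\rho<s\}}\rho\, u_2\,dy=\int_\Omega \theta\, u_2\,dx,
\]
and the same majorant (uniform in $t$ on compact subintervals) allows one to interchange $\lim_{s\to\infty}$ with $\frac{d}{dt}$, most cleanly by integrating the identity above on $[t_1,t_2]$ and then passing to the limit. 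Finally, using $-\Delta\Psi = \partial_1\theta$ and integration by parts,
\[
\int_\Omega\theta\, u_2\,dx = \int_\Omega\theta\,\partial_1\Psi\,dx = -\int_\Omega\partial_1\theta\cdot\Psi\,dx = \int_\Omega\Delta\Psi\cdot\Psi\,dx = -\|\nabla\Psi\|_{L^2}^2 = -\|u\|_{L^2}^2,
\]
which gives the desired identity.

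The main obstacle is exactly the passage $s\to\infty$ combined with the commutation with $\frac{d}{dt}$: both $\rho$ and $\rho_0^*$ grow linearly at infinity, so $\rho u_2\notin L^1(\Omega)$ a priori, and the integral $\int_\Omega \rho u_2\,dx$ is merely conditionally convergent. This is resolved by first peeling off the stratified part $\rho_s$ (killed by the zero horizontal mean of $u_2$) and then invoking the sharp spatial decay of $\theta$ and $u_2$ from Proposition~\ref{lwp_IPM}, which is precisely why the lemma is stated under the dense hypothesis $\rho_0-\rho_s\in C_c^\infty$.
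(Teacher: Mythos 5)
Your Lagrangian rewriting $E_s(\rho(t))=\int_{A_s}\rho_0(x)\,\Phi_t(x)_2\,dx$ is a genuinely cleaner route than the paper's. The paper differentiates the Eulerian form directly, writing $\frac{d}{dt}E_s=\int_{\{-s<\rho<s\}}\rho_t\,x_2\,dx$, and then produces a boundary integral $I_2=-\oint\theta\,x_2\,u\cdot\vec n\,d\sigma$ by integrating $-u\cdot\nabla\theta$ by parts; the Reynolds boundary contribution from the moving domain $\{-s<\rho(t)<s\}$ is never made explicit. Your change of variables makes the Reynolds/divergence-theorem cancellation exact and gives the clean identity $\frac{d}{dt}E_s=\int_{\{-s<\rho<s\}}\rho\,u_2\,dy$ with no boundary term at all, which is both more transparent and easier to control uniformly in $t$. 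You also state the invariance $\rho(t)^*=\rho_0^*$ explicitly via the flow; the paper uses this silently when it replaces $E_s(\rho(t)^*)$ by $E_s(\rho_0^*)$. What the paper's route buys is a term-by-term list $I_1,I_2,I_3$ whose vanishing is checked individually against the decay of Proposition~\ref{lwp_IPM}, rather than the single identity followed by a split.

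The one step you pass over too quickly is $\lim_{s\to\infty}\int_{\{-s<\rho<s\}}\rho_s\,u_2\,dy=0$. You cannot invoke $\int_\Omega\rho_s u_2=0$ directly: $\rho_s u_2$ is only $O(1)$ at spatial infinity under the bounds of Proposition~\ref{lwp_IPM} (so not absolutely integrable on $\Omega$), and $\{-s<\rho<s\}$ is not a horizontal strip. The correct argument is in two parts: the integral over the strip $\{\phi_0(s)<x_2<\phi_0(-s)\}$ vanishes exactly by zero horizontal mean of $u_2$; and the symmetric difference between $\{-s<\rho<s\}$ and that strip has vertical thickness $|h(\cdot,\pm s)|$, which tends to zero by the decay \eqref{decay_estimate_1} from Lemma~\ref{rearrangement_lem}, while $|\rho_s u_2|$ stays bounded there, so that correction is $o(1)$. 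This is exactly the role played by the paper's $I_3=\int_{\{-s<\rho<s\}}(-u_2\,\partial_2\rho_s\,x_2)\,dx$ (note $\rho_s u_2=\nabla\cdot(\rho_s x_2 u)-\partial_2\rho_s\,u_2\,x_2$, so the two are the same term modulo a flux). Be aware that the paper's own justification for $I_3\to 0$ asserts ``$\partial_2\rho_s(x_2)\to 0$ as $|x_2|\to\infty$,'' which is incompatible with the standing assumption $\inf(-\partial_2\rho_s)>0$ and false for the model case $\rho_s=-x_2$; the mechanism that actually makes this term vanish is the zero-horizontal-mean cancellation you implicitly use, so your instinct here is correct even though the written argument needs the two-part estimate above. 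The DCT argument for the $\theta u_2$ piece and the final integration by parts match the paper.
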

 \begin{proof} 
 Using the definition of $\mathcal{E}$ in \eqref{potential_e}, we observe that for sufficiently large $s>0$,
 \begin{align*}
 \frac{d}{dt} \left( E_s(\rho)-E_s(\rho_0^*)\right) &= \int_{\left\{x\in \Omega : \ -s< \rho(t,x)<s\right\}}\rho_t(t,x)x_2dx\\
 & = \int_{\left\{x\in \Omega : \ -s< \rho(t,x)<s\right\}}-u(t,x)\cdot \nabla \left(\rho(t,x)-\rho_s\right) x_2 dx\\
 & \  + \int_{\left\{x\in \Omega : \ -s< \rho(t,x)<s\right\}}-u_2(t,x)\partial_2\rho_s x_2 dx\\
 & = \int_{\left\{x\in \Omega :\ -s< \rho(t,x)<s\right\}}u_2(t,x)\theta(t,x) dx - \int_{\left\{x\in \Omega :\ \pm s=\rho(t,x)\right\}}\theta (t,x)x_2 u(t,x) \cdot\vec{n}d\sigma(x)\\
& \  +  \int_{\left\{x\in \Omega : \ -s< \rho(t,x)<s\right\}}-u_2(t,x)\partial_2\rho_s x_2 dx\\
 &=: I_1(t) + I_2(t) + I_3(t),
 \end{align*}
 where $d\sigma$ denotes the surface measure and $\vec{n}$ is the outer normal vector on $\left\{x\in \Omega :\ \pm s=\rho(t,x)\right\}$. Let us estimate $I_1,I_2$ and $I_3$.
 
  For $I_2$,  it follows from \eqref{decay_estimate_123} that
  \begin{align*}
  |I_2(t)|&\le \rVert x_2u_2\rVert_{L^\infty}\sigma(\left\{x\in \Omega :\ \pm s=\rho(t,x)\right\})
\sup_{x\in \left\{x\in \Omega :\ \pm s=\rho(t,x)\right\}}|\theta(t,x)|\\
&\le\rVert x_2u_2\rVert_{L^\infty}\rVert x_2\theta\rVert_{L^\infty}\sup_{\left\{x\in \Omega :\ \pm s=\rho(t,x)\right\}}|x_2|^{-1}\sigma(\left\{x\in \Omega :\ \pm s=\rho(t,x)\right\})\\
&\le C(T)\sup_{\left\{x\in \Omega :\ \pm s=\rho(t,x)\right\}}|x_2|^{-1}\sigma(\left\{x\in \Omega :\ \pm s=\rho(t,x)\right\})
    \end{align*}
    Since $\rVert\rho-\rho_s\rVert_{C^1}\le  \rVert \rho-\rho_s\rVert_{H^k}\le \delta_0$ and $\rho_s$ is strictly decreasing, it holds that 
    \begin{align}\label{lifting_levelset_2}
    \inf_{\left\{x\in \Omega :\ \pm s=\rho(t,x)\right\}}|x_2| \to \infty \text{ as $s\to \infty$},
    \end{align}
   and the length of the level sets are bounded as well,
   \[
   \sigma(\left\{x\in \Omega :\ \pm s=\rho(t,x)\right\})< C(T)
   \]  for some constant $C(T)>0$. Therefore, 
    \begin{align}
    \lim_{s\to \infty}I_2(t) = 0 \text{ uniformly in $t\in [0,T]$.}
    \end{align}
    For $I_3$, again the decay \textcolor{black}{estimate} in \eqref{decay_estimate_123} give that
    \[
    |I_3(t)|\le \rVert u_2x_2\rVert_{L^\infty}\sup_{\left\{x\in \Omega :\ \pm s=\rho(t,x)\right\}}|\partial_2\rho_s(x)|\le C(T)\sup_{\left\{x\in \Omega :\ \pm s=\rho(t,x)\right\}}|\partial_2\rho_s(x_2)|.
    \]
    Since $\partial_2\rho_s\in H^k$, it holds that $\partial_2\rho_s(x_2)\to 0$ as $|x_2|\to \infty$. Thus, using \eqref{lifting_levelset_2}, we get
    \[
    \sup_{\left\{x\in \Omega :\ \pm s=\rho(t,x)\right\}}|\partial_2\rho_s(x_2)| \to 0 \text{ as $s\to \infty$, }
    \]
    yielding that 
    \[
    \lim_{s\to \infty}I_3(t)\to 0,\text{ uniformly in $t\in [0,T]$.}
    \]
 
 For $I_1$, since $u_2\theta $ is an integrable function, \textcolor{black}{there holds}
  \begin{align*}
 \lim_{s\to \infty}I_1(t) &=\lim_{s\to\infty}\int_{\left\{x\in \Omega :\ -s< \rho(t,x)<s\right\}}u_2(t,x)\theta(t,x) dx  = \int_{\Omega} u_2\theta  dx\\
 & =  - \int_{\Omega} \Psi \partial_1\theta dx\\
 & = \int_{\Omega}\Psi \Delta \Psi dx \\
 & = -\rVert u\rVert_{L^2}^2,
  \end{align*}
  and the limit is uniform in $t\in [0,T]$ thanks to the decay estimates in \eqref{decay_estimate_123}. Thus we arrive at
  \begin{align*}
  \frac{d}{dt}E(t)& = \lim_{s\to \infty} \frac{d}{dt}E_s(t) = \lim_{s\to\infty}I_1(t) = -\rVert u(t)\rVert_{L^2}^2.    \end{align*}
 \end{proof}

\begin{proposition}\label{energy_analysis}
There exist $\delta_0>0$  such that if $\rho_0-\rho_s\in C^\infty_{c}(\Omega)$ and  \eqref{assumption_delta} holds, then 
\begin{align}
E(t)&\le_C \frac{\delta}{t^k},\label{udecay1}\\
\frac{1}{t}\int_{t/2}^{t}\rVert u(s)\rVert_{L^2}^2 ds&\le_C \frac{\delta}{t^{k+1}}\label{udecay2}\textcolor{black}{,}
\end{align}
for all $t\in [0,T]$.
\end{proposition}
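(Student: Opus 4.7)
The argument is essentially assembly: combine the energy identity of Lemma~\ref{energy_3x21} with the coercivity-type estimate \eqref{u_to_ffstar} of Proposition~\ref{propoos} applied to $f=\rho(t)$, then feed the resulting differential inequality into the ODE machinery from Section~2. The key observation is that by Darcy's law the field $v$ appearing in \eqref{u_to_ffstar} for $f=\rho(t)$ equals $u(t)=\nabla^{\perp}(-\Delta)^{-1}\partial_1\rho(t)$.

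First I would verify the hypotheses of Proposition~\ref{propoos} pointwise in time: the smallness assumption \eqref{small_delta_assumption_2} with $f=\rho(t)$ is immediate from \eqref{assumption_delta} for $\delta_0$ sufficiently small, while the decay condition \eqref{decay_assumption_1} is a consequence of the initial hypothesis $\rho_0-\rho_s\in C^\infty_c(\Omega)$ together with the persistence-of-decay estimate \eqref{decay_estimate_123} from Proposition~\ref{lwp_IPM}. Moreover, since $\rho(t)$ is transported along the divergence-free field $u$, its distribution function is invariant in time, so the measure-preserving stratification satisfies $(\rho(t))^{*}=\rho_0^{*}$ for all $t\in[0,T]$; consequently $\mathcal{E}(\rho(t))$ coincides with $E(t)$. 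Applying \eqref{u_to_ffstar} with $v=u(t)$ then yields
\[
E(t)\,\le\,C\,\rVert u(t)\rVert_{L^2}^{2(k-1)/k}\,\rVert u(t)\rVert_{H^k}^{2/k}\qquad\text{for all }t\in[0,T].
\]

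Raising this to the power $k/(k-1)$ and rearranging produces the lower bound $\rVert u(t)\rVert_{L^2}^{2}\ge C^{-1}E(t)^{k/(k-1)}\rVert u(t)\rVert_{H^k}^{-2/(k-1)}$, which combined with the energy identity $\tfrac{d}{dt}E(t)=-\rVert u(t)\rVert_{L^2}^{2}$ from Lemma~\ref{energy_3x21} gives
\[
\frac{d}{dt}E(t)\,\le\,-\,C^{-1}\,\rVert u(t)\rVert_{H^k}^{-2/(k-1)}\,E(t)^{k/(k-1)}.
\]
I would then invoke Lemma~\ref{ABC_ODsdE} with $\alpha=1/(k-1)$, $n=k/(k-1)$, and $a(t)=\rVert u(t)\rVert_{H^k}^{2}$. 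The bootstrap assumption \eqref{assumption_delta} yields $A:=\int_0^{t}a(s)\,ds\le\delta$, while the elementary identities $(\alpha+1)/(n-1)=k$ and $\alpha/(n-1)=1$ deliver \eqref{udecay1}. Finally, \eqref{udecay2} follows by applying Lemma~\ref{ODEsdsdBDC} to $f=E$, $g=\rVert u\rVert_{L^2}^{2}$, and $n=k$, using Lemma~\ref{energy_3x21} together with the already-established bound \eqref{udecay1}.

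The proof is predominantly bookkeeping; the one conceptual point requiring care is the identification $(\rho(t))^{*}=\rho_0^{*}$, which justifies replacing $\mathcal{E}(\rho(t))$ by $E(t)$. This rests on the fact that the flow generated by a sufficiently regular divergence-free vector field (guaranteed here by $\rho(t)-\rho_{s}\in H^{k}$ with $k>2$ and the embedding $H^{k}\hookrightarrow C^{1}$) is measure-preserving, hence preserves the distribution function of the transported scalar $\rho(t)$.
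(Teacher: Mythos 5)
Your proposal is correct and follows essentially the same route as the paper's proof: apply \eqref{u_to_ffstar} of Proposition~\ref{propoos} to $f=\rho(t)$, combine with the energy identity of Lemma~\ref{energy_3x21} to get the differential inequality $\frac{d}{dt}E(t)\le -C E(t)^{k/(k-1)}\rVert u\rVert_{H^k}^{-2/(k-1)}$, then invoke Lemma~\ref{ABC_ODsdE} (with $\alpha=1/(k-1)$, $n=k/(k-1)$, $a=C\rVert u\rVert_{H^k}^2$) for \eqref{udecay1} and Lemma~\ref{ODEsdsdBDC} for \eqref{udecay2}. Your version is slightly more careful than the printed proof in two small ways: it makes explicit the identification $(\rho(t))^*=\rho_0^*$ (used silently in Lemma~\ref{energy_3x21} when taking $E_s(\rho_0^*)$ as the time-independent reference) and the verification that the hypotheses \eqref{small_delta_assumption_2}, \eqref{decay_assumption_1} of Proposition~\ref{propoos} hold for $f=\rho(t)$; it also carries the correct sign $\rVert u\rVert_{H^k}^{-2/(k-1)}$, where the paper's displayed inequality has an evident sign typo in the exponent (the subsequent application of Lemma~\ref{ABC_ODsdE} there is nevertheless consistent).
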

\begin{proof} Note that \eqref{u_to_ffstar} in Proposition~\ref{propoos} reads
\[
E(t)\le C\left(\rVert u(t)\rVert_{L^2}^{\frac{k-1}{k}}\rVert u\rVert_{H^k}^{\frac{1}{k}}\right)^2,
\]
which is equivalent to
\[
\rVert u(t)\rVert_{L^2}^2 \ge C E(t)^{\frac{k}{k-1}}\rVert u(t)\rVert_{H^k}^{\frac{2}{k-1}}.
\]
Hence, Lemma~\ref{energy_3x21} tells us
\[
\frac{d}{dt}E(t)\le -C E(t)^{\frac{k}{k-1}}\rVert u(t)\rVert_{H^k}^{\frac{2}{k-1}}.
\]
Applying Lemma~\ref{ABC_ODsdE} with $f(t):=E(t)$, $n=:\frac{k}{k-1}>1$, $\alpha:=\frac{1}{k-1}$ and $a(t):=C\rVert u(t)\rVert_{H^k}^{2}$, we obtain
\[
E(t)\le \frac{CA}{t^k},\text{ with $A:=\int_0^t \rVert u(t)\rVert_{H^k}^2\le \delta$,}
\]
where the bound for $A$ is due to \eqref{assumption_delta}. This proves \eqref{udecay1}.  Applying Lemma~\ref{ODEsdsdBDC} to $\frac{d}{dt}E(t)$, we also obtain \eqref{udecay2}.
\end{proof}

 In the rest of this section, we will derive decay rates of high Sobolev norms of the velocity.

\begin{proposition}\label{u_2high1}
There exist $\delta_0>0$  such that if $\rho_0-\rho_s\in C^\infty_c(\Omega)$ and \eqref{assumption_delta} holds, then 
\begin{align}
\frac{1}{t}\int_{t/2}^{t}\rVert u(s)\rVert_{H^2}^2+s\rVert u_2(s)\rVert_{H^2}^2 ds &\le_C\frac{\delta}{t^{k-1}}, \text{ for $t\in [0,T]$}.\label{udecay452}
\end{align}
\end{proposition}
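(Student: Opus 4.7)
The plan is to establish the two averaged-decay bounds $\rVert u(t)\rVert_{H^2}^2 = O(\delta\,t^{-(k-1)})$ and $\rVert u_2(t)\rVert_{H^2}^2 = O(\delta\,t^{-k})$ in the notation of \eqref{time_average_notation1}; the claimed sum estimate follows immediately via $s\le t$ on $[t/2,t]$. The first bound is obtained by combining the 2D Gagliardo--Nirenberg interpolation
\[
\rVert u\rVert_{\dot H^2}^2 \le C\rVert u\rVert_{L^2}^{2(k-2)/k}\rVert u\rVert_{\dot H^k}^{4/k}
\]
with H\"older in time using conjugate exponents $p=k/(k-2)$ and $q=k/2$. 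Integrating over $[t/2,t]$ and applying \eqref{udecay2} to bound $\int_{t/2}^t \rVert u\rVert_{L^2}^2\le C\delta/t^k$ and \eqref{assumption_delta} to bound $\int_{t/2}^t \rVert u\rVert_{\dot H^k}^2 \le \delta$ yields $\int_{t/2}^t \rVert u\rVert_{\dot H^2}^2\,ds \le C\delta/t^{k-2}$, precisely the desired avg decay.

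The sharper second bound is obtained by bootstrapping via the differential inequalities of Proposition~\ref{velocity_derivative}. After absorbing the cross term in the inequality for $\rVert\partial_{12}\theta\rVert_{L^2}^2$ by Cauchy--Schwarz and the smallness $\rVert\theta\rVert_{H^k}\le\sqrt\delta$, both inequalities conform to the template $\frac{d}{dt}H\le -f + hH + g$ of Lemma~\ref{diff_lem_time}. I would apply the lemma first to the inequality for $\rVert \partial_{11}\theta\rVert_{L^2}^2$ (with $H\le \rVert u\rVert_{\dot H^2}^2$ of avg decay $\delta/t^{k-1}$ from the previous step and $g = C\rVert u_2\rVert_{L^2}^2$ of avg decay $\delta/t^{k+1}$) to conclude $\rVert \partial_1|D|^{-1}u_2\rVert_{H^2}^2 = O(\delta/t^k)$; then to the inequality for $\rVert\partial_{12}\theta\rVert_{L^2}^2$, whose source term now inherits the improved rate $\delta/t^k$, to conclude $\rVert u_2\rVert_{H^2}^2 = O(\delta/t^k)$.

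The main obstacle is verifying the hypothesis $\int_0^T h(s)\,ds\le 1$ of Lemma~\ref{diff_lem_time}, where $h(s)=C(\rVert u(s)\rVert_{H^k}^2+\rVert \partial_2 u_2(s)\rVert_{L^\infty})$. The first summand integrates to $\le C\delta$ by \eqref{assumption_delta}, but the second is delicate: one must control $\int_0^T \rVert \partial_2 u_2\rVert_{L^\infty}\, dt$ uniformly in $T$ using only the averaged decays already available. The route is the Sobolev interpolation $\rVert \partial_2 u_2\rVert_{L^\infty} \le C\rVert u_2\rVert_{H^{2+\eta}} \le C\rVert u_2\rVert_{H^2}^{(k-2-\eta)/(k-2)}\rVert u\rVert_{H^k}^{\eta/(k-2)}$ for $\eta\in (0,k-2)$, combined with a dyadic time summation leveraging the averaged $H^2$-decay being derived; the strict supercriticality $k>2$ (mirroring the exponent constraint $\eta<(k-2)^2/(k-1)$ from the introduction's scheme~(B)) is precisely the margin that closes the bootstrap uniformly in $T$.
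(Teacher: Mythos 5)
Your first step---getting the averaged bound $\frac{1}{t}\int_{t/2}^t\rVert u(s)\rVert_{H^2}^2\,ds\le_C\delta/t^{k-1}$ by interpolating $\rVert u\rVert_{\dot H^2}^2\le\rVert u\rVert_{L^2}^{2(k-2)/k}\rVert u\rVert_{\dot H^k}^{4/k}$ and applying H\"older in time together with Proposition~\ref{energy_analysis} and \eqref{assumption_delta}---is correct and coincides exactly with how the paper handles the quantity $H_2=\rVert\partial_{12}\theta\rVert_{L^2}^2$ in \eqref{h_2estimate}. The gap is in the second half: you try to derive $\rVert u_2\rVert_{H^2}^2=O(\delta/t^k)$ \emph{sequentially}, feeding the first-step decay into \eqref{laplacian_1} and then \eqref{laplacian_2}, whereas the paper runs a genuine bootstrap-by-contradiction: it posits that $T_*$ is the first time the averaged bound with a large constant $M$ saturates, which supplies the improved a priori rate $\frac{2}{t}\int_{t/2}^t\rVert u_2(s)\rVert_{H^2}^2\,ds\le CM\delta/t^k$ on $[0,T_*]$ \emph{before} running the ODE lemma. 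This is not cosmetic; without it the numerology does not close, for two independent reasons.

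First, without the bootstrap you can only say $H_1=\rVert\partial_{11}\theta\rVert_{L^2}^2\le C\rVert u\rVert_{H^2}^2=O(\delta/t^{k-1})$, so Lemma~\ref{diff_lem_time} applied to \eqref{laplacian_1} yields only $f_1=\rVert\partial_1|D|^{-1}u_2\rVert_{H^2}^2=O(\delta/t^k)$, one full power weaker than the $O(M\delta/t^{k+1})$ the paper obtains from the bootstrap hypothesis (which gives $H_1=O(M\delta/t^k)$, see \eqref{time_av_appl1}). This weaker rate kills the second application of the lemma. The paper handles the cross term in \eqref{laplacian_2} by keeping it inside the source $g_2=C(\sqrt{f_1}\sqrt{H_2}\rVert\theta\rVert_{H^k}+\rVert u\rVert_{L^2}^2)$ and estimating $\frac{2}{t}\int_{t/2}^t g_2$ via Cauchy--Schwarz \emph{in time}, getting $\sqrt{\delta}(M\delta/t^{k+1})^{1/2}(\delta/t^{k-1})^{1/2}\sim M^{1/2}\delta^{3/2}/t^k$, which is $O(\delta/t^k)$ for $\delta$ small. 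With your $f_1=O(\delta/t^k)$ this product decays only like $\delta^{3/2}/t^{k-1/2}$, which does \emph{not} satisfy the $A/t^{n+1}=A/t^k$ requirement of Lemma~\ref{diff_lem_time}. Your alternative route---a pointwise Cauchy--Schwarz on the cross term to push it into the $hH$ slot---fails because it contributes a constant multiple of $\rVert\theta\rVert_{H^k}^2\lesssim\delta$ to $h(t)$, and $\int_0^T\delta\,dt=\delta T$ is not bounded uniformly in $T$, violating the hypothesis $\int_0^T h\le 1$.

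Second, and independently, the integrability of $h(t)=C(\rVert u\rVert_{H^k}^2+\rVert\partial_2 u_2\rVert_{L^\infty})$ itself requires the bootstrap. With the interpolation $\rVert\partial_2 u_2\rVert_{L^\infty}\le C_\eta\rVert u_2\rVert_{H^2}^{(k-2-\eta)/(k-2)}\rVert u\rVert_{H^k}^{\eta/(k-2)}$, plugging in the \emph{unbootstrapped} rate $\rVert u_2\rVert_{H^2}^2=O(\delta/t^{k-1})$ gives an averaged decay exponent $\frac{k-1-\eta}{2}$, so Lemma~\ref{Time_average_2} requires $\eta<k-3$---which admits no $\eta>0$ whenever $2<k\le 3$. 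Only by feeding in the bootstrap rate $\rVert u_2\rVert_{H^2}^2=O(M\delta/t^k)$ does the exponent improve to $\frac{k(k-2-\eta)+\eta}{2(k-2)}$, giving the constraint $\eta<(k-2)^2/(k-1)$ that you quote from the introduction, which is satisfiable for the whole range $k>2$. So your plan as stated silently loses the regime $2<k\le 3$, which is precisely the novel regime the theorem is aiming for. The remedy is to replace the sequential derivation by the paper's contradiction setup \eqref{contradict3333ion3_1}--\eqref{contradict3333ion3_12323}: everything downstream is then available with the improved $t^{-k}$ rate on the interval $[0,T_*]$, and the final comparison of constants rules out the first failure time.
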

\begin{proof}
Note that under the assumption~\eqref{assumption_delta}, Proposition~\ref{velocity_derivative} gives us
\begin{align}
\frac{d}{dt}\rVert \partial_{11}\theta\rVert_{L^2}^2 &\le  - C \Big\rVert \partial_1|D|^{-1}u_2\Big\rVert_{{H}^2}^2  +C\left(\rVert u\rVert_{H^k}^2 + \rVert \partial_2u_2\rVert_{L^\infty}\right)\rVert \partial_{11}\theta\rVert_{L^2}^2 + C\rVert u_2\rVert_{L^2}^2, \label{laplacian_11}\\
\frac{d}{dt} \rVert \partial_{12}\theta\rVert_{L^2}  & \le -C\rVert u_2\rVert_{H^2}^2 + C\left(\rVert u\rVert_{H^k}^2 + \rVert \partial_2u_2\rVert_{L^\infty}\right)\rVert \partial_{12}\theta\rVert_{L^2}^2\nonumber \\
& \ + C\left(\Big\rVert \partial_1|D|^{-1}u_2\Big\rVert_{H^2}\rVert\partial_{12}\theta\rVert_{L^2}\rVert \theta\rVert_{H^k} +   \Big\rVert \partial_1|D|^{-1}u_2\Big\rVert_{{H}^2}^2 +\rVert u\rVert_{L^2}^2\right). \label{laplacian_22} 
\end{align}
  
   Now, towards a contradiction, let us suppose there exists the first time $T_*>0$ such that 
   \begin{align}
\frac{1}{T_*}\int_{T_*/2}^{T_*}\rVert u(s)\rVert_{H^2}^2+s\rVert u_2(s)\rVert_{H^2}^2 ds &= \frac{M\delta}{T_*^{k-1}},\text{ for some  large $ 1\ll M\ll 1/\delta_0$,}\label{contradict3333ion3_1}
\end{align}
so that
\begin{align}
\frac{1}{t}\int_{t/2}^{t}\rVert u(s)\rVert_{H^2}^2+s\rVert u_2(s)\rVert_{H^2}^2 ds &\le \frac{M\delta}{t^{k-1}}, \text{ for $t\in [0,T_*]$.}\label{contradict3333ion3_12323}
\end{align}

 Towards a contradiction, we will first apply Lemma~\ref{diff_lem_time} to \eqref{laplacian_11}. To do so, let us denote 
  \begin{align*}
  H_1(t)&:=\rVert \partial_{11}\theta(t)\rVert_{L^2}^2,\\
   f_1(t)&:= C \rVert \partial_1|D|^{-1}u_2\rVert_{{H}^2}^2,\\
    h(t)&:= C\left(\rVert u(t)\rVert_{H^k}^2+\rVert \partial_2u_2(t)\rVert_{L^\infty}\right),\\
     g_1(t)&:=C\rVert u_2\rVert_{L^2}^2.
  \end{align*}
Then, \eqref{laplacian_11} reads
\begin{align}\label{laplacian_112sdxc2}
\frac{d}{dt}H_1(t)\le -f_1(t) + h(t)H_1(t) + g_1(t).
\end{align}
The decay rates for $H_1$ and $g_1$ can be computed as
\begin{align}
\frac{2}{t}\int_{t/2}^t H_1(s)ds& =\frac{2}{t}\int_{t/2}^t \rVert \partial_{11}\theta(s)\rVert_{L^2}ds = \frac{2}{t}\int_{t/2}^t\rVert \Delta u_2(s)\rVert_{L^2}ds\le \frac{M\delta}{t^k},\text{ for $t\in [0,T_*]$,}\label{time_av_appl1}\\
\frac{2}{t}\int_{t/2}^t g_1(s)ds& = \frac{2}{t}\int_{t/2}^t C\rVert u_2\rVert_{L^2}^2ds\le \frac{C\delta}{t^{k+1}},\text{ for $t\in [0,T_*]$,}\label{time_av_appl2}
\end{align}
where the last inequality in the first line is due to \eqref{contradict3333ion3_12323} and the second last inequality in the second line is from Proposition~\ref{energy_analysis}. Furthermore, for the integrability of $h$, we use the Sobolev embedding theorem to see that
\[
\rVert \partial_2u_2\rVert_{L^\infty}\le C_\eta\rVert u_2\rVert_{H^{2+\eta}}\le C_\eta \rVert u_2\rVert_{H^2}^{1-\frac{\eta}{k-2}}\rVert u_2\rVert_{H^{2+(k-2)}}^{\frac{\eta}{k-2}},\text{ for $0 < \eta \le  k-2$.}
\]
Therefore Jensen's inequality gives us that for $t\in [0,T_*]$ and $0<\eta\le k-2$,
\begin{align*}
\frac{2}{t}\int_{t/2}^{t} \rVert \partial_2u_2(s)\rVert_{L^\infty} ds&\le C_\eta \frac{2}{t}\int_{t/2}^{t}  \left(\rVert u_2(s)\rVert_{H^2}^2\right)^{\frac{k-2-\eta}{2(k-2)}}\left(\rVert u(s)\rVert_{H^{k}}^2\right)^{\frac{\eta}{2(k-2)}} ds\\
&\le C_\eta \left(\frac{2}{t}\int_{t/2}^t\rVert u_2(s)\rVert_{H^2}^2ds \right)^{\frac{k-2-\eta}{2(k-2)}}\left(\frac{2}{t}\int_{t/2}^t\rVert u(s)\rVert_{H^k}^2ds \right)^{\frac{\eta}{2(k-2)}}\\
 &\le C_\eta \left(\frac{M\delta}{t^k} \right)^{\frac{k-2-\eta}{2(k-2)}}\left( \frac{\delta}{t}\right)^{\frac{\eta}{2(k-2)}}\\
&\le C_\eta \frac{(M\delta)^{1/2}}{t^{\frac{k(k-2-\eta)}{2(k-2)} +\frac{\eta}{2(k-2)}}},
\end{align*}
where the second last inequality is due to the estimates for $\frac{2}{t}\int_{t/2}^{t} \rVert u_2(s)\rVert_{H^2}^2 ds$ and $\int_{t/2}^{t} \rVert u(s)\rVert_{H^k}^2 ds$ in \eqref{contradict3333ion3_12323} and \eqref{assumption_delta}. Since $k>2$, we can choose $\eta=\eta(k)\in (0,k-2)$ small enough, so that 
\[
\frac{k(k-2-\eta)}{2(k-2)} +\frac{\eta}{2(k-2)} >1.
\]
Hence, Lemma~\ref{Time_average_2} gives
\begin{align}\label{partial_2theta}
\int_0^{T_*} \rVert \partial_2u_2\rVert_{L^\infty}dt \le C{(M\delta)^{1/2}}.
\end{align}
Hence combining this with \eqref{assumption_delta}, we have
\begin{align}\label{h_integrability}
\int_0^{T_*}h(s)ds = \int_0^{T_*}C\rVert u(s)\rVert_{H^k}^2 +C\rVert \partial_2u_2(s)\rVert_{L^\infty}ds\le_C \delta + (M\delta)^{1/2}\le C(M\delta_0)^{1/2}\ll 1.
\end{align}
for sufficiently small $\delta_0$, depending on $C$ and $M$ (see \eqref{contradict3333ion3_1}).
Therefore, applying Lemma~\ref{diff_lem_time} to \eqref{laplacian_112sdxc2} with the estimates in \eqref{time_av_appl1} and \eqref{time_av_appl2},  we obtain
\begin{align}\label{fastest_decaying_term}
\frac{1}{t}\int_{t/2}^{t}f_1(s)ds  \le \frac{CM\delta}{t^{k+1}},\text{ for $t\in [0,T_*]$.}
\end{align}

 Next, we denote 
 \[
  H_2(t):=\rVert \partial_{12}\theta(t)\rVert_{L^2}^2,\quad f_2(t):=  C\rVert u_2\rVert_{{H}^2}^2,\quad g_2(t):=C\left(\sqrt{f_1(t)} \sqrt{H_2(t)}\rVert \theta\rVert_{H^k} + \rVert u(t)\rVert_{L^2}^2\right),
 \]
 so that  \eqref{laplacian_22} reads
\begin{align}\label{lLsd222sdxc}
\frac{d}{dt}H_2(t)\le -f_2(t) + h(t)H_2(t) +g_2(t),
\end{align}
From 
\[
H_2(s)=\rVert \partial_2\Delta \Psi\rVert_{L^2}^2=\rVert u_1\rVert_{H^2}^2\le_C \left(\rVert u\rVert_{L^2}^2\right)^{\frac{k-2}k}\left(\rVert u\rVert_{H^k}^2\right)^{2/k},
\]
we see that
\begin{align}\label{h_2estimate}
\frac{2}{t}\int_{t/2}^t H_2(s)ds&\le_C \frac{2}{t}\int_{t/2}^t \left(\rVert u(s)\rVert_{L^2}^{2}\right)^{\frac{k-2}{k}}\left( \rVert u (s)\rVert_{H^k}^2\right)^{2/k}ds\nonumber\\
&\le_C   \left(\frac{2}{t}\int_{t/2}^t \rVert u(s)\rVert_{L^2}^{2}ds\right)^{\frac{k-2}k}\left(\frac{2}{t}\int_{t/2}^t \rVert u(s)\rVert_{L^2}^{2}ds \right)^{2/k}\nonumber\\
&\le_C \left(\frac{\delta}{t^{k+1}}\right)^{\frac{k-2}k}\left(\frac{\delta}{t}\right)^{2/k}\nonumber\\
&\le \frac{C\delta}{t^{k-1}},
\end{align}
where the second last inequality is due to Proposition~\ref{energy_analysis} and \eqref{assumption_delta}. For $g_2$, it follows from the estimate for $\rVert \theta\rVert_{H^k}=\rVert \rho(t)-\rho_s\rVert_{H^k}$ in  \eqref{assumption_delta} that
\begin{align}\label{g2estimate_1}
\frac{2}{t}\int_{t/2}^t g_2(s)ds &\le_C\delta\frac{2}{t}\int_{t/2}^{t}\sqrt{f_1(s)}\sqrt{H_2(s)}ds + \frac{2}{t}\int_{t/2}^t \rVert u(s)\rVert_{L^2}^2 ds\nonumber\\
&\le_C \delta\left(\frac{2}{t}\int_{t/2}^{t}f_1(s)ds\right)^{1/2}\left(\frac{2}{t}\int_{t/2}^{t}H_2(s)ds\right)^{1/2} + \frac{2}{t}\int_{t/2}^t \rVert u(s)\rVert_{L^2}^2 ds\nonumber\\
&\le_C \delta \left(\frac{M\delta}{t^{k+1}}\right)^{1/2}\left(\frac{\delta}{t^{k-1}}\right)^{1/2}+\frac{\delta}{t^{k+1}}\nonumber\\
&\le_C \frac{M^{1/2}\delta^2}{t^k}+\frac{\delta}{t^{k+1}}\nonumber\\
&\le \frac{\delta M^{1/2}}{t^k},
\end{align}
where the third inequality follows from \eqref{fastest_decaying_term}, \eqref{h_2estimate} and \eqref{udecay2}.
Applying Lemma~\ref{diff_lem_time} to \eqref{lLsd222sdxc} with the estimates in \eqref{h_integrability}, \eqref{h_2estimate} and \eqref{g2estimate_1}, we obtain
\begin{align*}
\frac{2}{t}\int_{t/2}^t f_2(s)ds \le \frac{C\delta}{t^k}.
\end{align*}
We put this estimate together with \eqref{h_2estimate} and \eqref{contradict3333ion3_1}, we arrive at
\begin{align*}
\frac{M\delta}{T_*^{k-1}} \le \frac{1}{T_*}\int_{T_*/2}^{T_*} H_2(s) +sf_2(s)ds \le \frac{C\delta}{T_*^{k-1}}.
\end{align*}
Since $C$ is a constant that depends only on $\gamma$, the above inequality shows that if $M$ were chosen sufficiently large depending on $\gamma$, we arrive at a contradiction, which proves that \eqref{contradict3333ion3_1} cannot occur.
\end{proof}

\begin{corollary}\label{integrability_u2}
There exist $\delta_0>0$  such that if $\rho_0-\rho_s\in C^\infty_c(\Omega)$ and \eqref{assumption_delta} holds, then
\[
\int_{0}^T \rVert \nabla u_2(s)\rVert_{L^\infty}ds\le \sqrt{\delta}.
\]
\end{corollary}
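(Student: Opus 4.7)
The plan is to adapt the interpolation and time-average argument already employed inside the proof of Proposition~\ref{u_2high1} to control $\int_0^{T_*}\|\partial_2 u_2\|_{L^\infty}\,dt$ (see the derivation of \eqref{partial_2theta}), now applied to the full gradient $\nabla u_2$ and carried out on the whole interval $[0,T]$, with Proposition~\ref{u_2high1} available as a direct hypothesis rather than as an unknown bootstrap quantity.

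First I would invoke the two-dimensional Sobolev embedding $H^{2+\eta}(\Omega)\hookrightarrow W^{1,\infty}(\Omega)$, valid for any $\eta>0$, to bound $\|\nabla u_2\|_{L^\infty}\le C_\eta\|u_2\|_{H^{2+\eta}}$. Then I would interpolate between $H^2$ and $H^k$, which is legal whenever $0<\eta<k-2$, to obtain
\[
\|\nabla u_2\|_{L^\infty}\le C_\eta\,\|u_2\|_{H^2}^{1-\eta/(k-2)}\,\|u\|_{H^k}^{\eta/(k-2)}.
\]

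Next, on each dyadic window $[t/2,t]$ I would time-average and apply Jensen's inequality. Proposition~\ref{u_2high1} supplies $\frac{2}{t}\int_{t/2}^t\|u_2(s)\|_{H^2}^2\,ds\le C\delta/t^k$ (after using $s\ge t/2$ to drop the weight), while the a priori bound \eqref{assumption_delta} gives $\frac{2}{t}\int_{t/2}^t\|u(s)\|_{H^k}^2\,ds\le \delta/t$. Exactly as in the computation leading to \eqref{partial_2theta}, these inputs yield
\[
\frac{2}{t}\int_{t/2}^t\|\nabla u_2(s)\|_{L^\infty}\,ds\le C_\eta\,\sqrt{\delta}\;t^{-\alpha},\qquad \alpha:=\frac{k(k-2-\eta)+\eta}{2(k-2)}=\frac{k}{2}-\frac{\eta(k-1)}{2(k-2)}.
\]
Since $k>2$, any $\eta\in(0,(k-2)^2/(k-1))$ gives $\alpha>1$; fix such an $\eta=\eta(\gamma)$. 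A final application of Lemma~\ref{Time_average_2} then produces
\[
\int_0^T\|\nabla u_2(s)\|_{L^\infty}\,ds\le C\sqrt{\delta},
\]
and choosing $\delta_0$ sufficiently small absorbs the constant $C$ to yield the stated estimate by $\sqrt{\delta}$.

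I do not anticipate any serious obstacle beyond correctly selecting the interpolation exponent $\eta=\eta(\gamma)$ to ensure $\alpha>1$; this is the same subcritical gap $k-2>0$ that the introductory sketch already identified as the key mechanism and that powered the analogous bound \eqref{partial_2theta}. The only difference is that Proposition~\ref{u_2high1} is now used directly as a hypothesis on $\|u_2\|_{H^2}$, so the auxiliary bootstrap factor $M^{1/2}$ appearing in \eqref{partial_2theta} is absent here, and the bookkeeping with constants is correspondingly simpler.
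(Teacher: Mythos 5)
Your proposal reproduces the paper's argument almost verbatim: the Sobolev embedding $\|\nabla u_2\|_{L^\infty}\le C_\eta\|u_2\|_{H^{2+\eta}}$, the interpolation between $H^2$ and $H^k$, the dyadic time-averaging with Jensen's inequality feeding in Proposition~\ref{u_2high1} and the a priori bound \eqref{assumption_delta}, the requirement $\alpha>1$ enforced by a small $\eta\in(0,(k-2)^2/(k-1))$, and the final appeal to Lemma~\ref{Time_average_2}. All of that is correct and is exactly what the paper does.

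There is, however, one gap in your last step. Lemma~\ref{Time_average_2} only yields $\int_t^T\|\nabla u_2(s)\|_{L^\infty}\,ds\le C\sqrt{\delta}\,t^{1-\alpha}$ for $0<t<T$, and since $\alpha>1$ this bound diverges as $t\to 0$. So the lemma gives $\int_1^T\|\nabla u_2\|_{L^\infty}\,ds\le C\sqrt{\delta}$, not the integral over all of $[0,T]$ as you assert. The contribution over $[0,1]$ has to be handled separately; the paper does this by the crude pointwise estimate
\[
\sup_{t\in[0,1]}\|\nabla u_2(t)\|_{L^\infty}\le C\sup_{t\in[0,1]}\|u_2(t)\|_{H^k}\le C\sup_{t\in[0,1]}\|\theta(t)\|_{H^k}\le C\sqrt{\delta},
\]
the last inequality coming directly from the a priori hypothesis \eqref{assumption_delta}, and then adds the two pieces. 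This is an easy repair and does not affect the validity of your overall strategy, but as written the final application of Lemma~\ref{Time_average_2} does not by itself produce the claimed bound on $\int_0^T$.
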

\begin{proof}
As we argued in the proof of the previous proposition, the Sobolev embedding theorems give us
\begin{align*}
\frac{2}{t}\int_{t/2}^{t} \rVert \nabla u_2(s)\rVert_{L^\infty} ds&\le C_\eta \frac{2}{t}\int_{t/2}^{t}  \left(\rVert u_2(s)\rVert_{H^2}^2\right)^{\frac{k-2-\eta}{2(k-2)}}\left(\rVert u(s)\rVert_{H^{k}}^2\right)^{\frac{\eta}{2(k-2)}} ds\\
 & \le  C_\eta \left(\frac{2}{t}\int_{t/2}^t\rVert u_2(s)\rVert_{H^2}^2ds \right)^{\frac{k-2-\eta}{2(k-2)}}\left(\frac{2}{t}\int_{t/2}^t\rVert u(s)\rVert_{H^k}^2ds \right)^{\frac{\eta}{2(k-2)}}, 
\end{align*}
for $\eta\in (0,k-2)$ and $t\in [0,T]$. Using \eqref{assumption_delta} and Proposition~\ref{u_2high1}, we have
\[
\frac{2}{t}\int_{t/2}^{t} \rVert \nabla u_2(s)\rVert_{L^\infty} ds\le C_\eta\left(\frac{\delta}{t^k} \right)^{\frac{k-2-\eta}{2(k-2)}}\left(\frac{\delta}{t}\right)^{\frac{\eta}{2(k-2)}}\le C_\eta \sqrt{\delta}\frac{1}{t^{\frac{k(k-2-\eta) + \eta}{2(k-2)}}},\text{ for $t\in [0,T]$}
\]
Choosing $\eta$ sufficiently small enough that $\frac{k(k-2-\eta) + \eta}{2(k-2)}>1$, Lemma~\ref{Time_average_2} gives 
\[
\int_{1}^{T} \rVert \nabla u_2(s)\rVert_{L^\infty} ds\le C\sqrt{\delta}.
\]
Since it is assumed that $\sup_{t\in [0,1]}\rVert \theta(t)\rVert_{H^k}<\sqrt{\delta}$ in \eqref{assumption_delta}, we \textcolor{black}{reach} 
\[
\sup_{t\in[0,1]}\rVert \nabla u_2(t)\rVert_{L^\infty}\le_C \sup_{t\in [0,1]}\rVert u_2(t)\rVert_{H^k}\le_C \sup_{t\in [0,1]}\rVert \theta(t)\rVert_{H^k}\le C\sqrt{\delta}\textcolor{black}{,}
\]
\textcolor{black}{as desired.}
\end{proof}

\section{Proof of Theorem~\ref{main_theorem}}
\begin{proof}[\textcolor{black}{Proof of Theorem~\ref{main_theorem}}]
We first note that in order to prove Theorem~\ref{main_theorem}, we can assume,  without loss of generality, that $\rho_0-\rho_s\in C^\infty_c(\Omega)$. To see this, we use the density of $C^\infty_c$ in $H^k$, that is, for any $\rho_0$ such that $\rVert \rho_0-\rho_s\rVert_{H^k}\le \epsilon$ for sufficiently small $\epsilon>0$, we  find a sequence $\rho_{0,N}$ such that
\[
\rho_{0,N}-\rho_s\in C^\infty_c(\Omega),\ \rVert \rho_{0,N}-\rho_{s}\rVert_{H^k}\le C\epsilon,\text{ and } \lim_{N\to\infty}\rVert \rho_{0,N}-\rho_0\rVert_{H^k}\to 0.
\]
If Theorem~\ref{main_theorem} holds for the initial data $\rho_{0,N}$, the corresponding solutions, $\rho_N(t)$ satisfy 
\begin{align}\label{rksppsd1stab}
\rVert \rho_N(t)-\rho_s\rVert_{H^k}\le C\epsilon,\quad \rVert \rho_{N}(t)-\rho_{0,N}^*\rVert_{L^2}\le \frac{C\epsilon}{t^{k/2}} \text{ for all $t>0$.}
\end{align}
Therefore, we can find a convergent subsequence such that, which we still denote it by $\rho_N$, and a limit $\rho$ such that 
\[
\rho_N-\rho_s\rightharpoonup^* \rho-\rho_s \text{ in $L^\infty([0,\infty); H^k)$},\quad \lim_{N\to\infty}\rVert \rho_N(t)-\rho(t)\rVert_{L^2}=0,\text{ for every $ t\in [0,\infty)$.}
\]
Since the constants $C$ in \eqref{rksppsd1stab} do not depend on $N$, taking $N\to\infty$ and using Lemma~\ref{Stability_1}, we arrive at
\[
\rVert \rho(t)-\rho_s\rVert_{H^k}\le C\epsilon,\quad \rVert \rho(t)-\rho_0^*\rVert_{L^2}\le \frac{C\epsilon}{t^{k/2}},\text{ for $t>0$.}
\]
Thanks to the wellposedness result in Proposition~\ref{lwp_IPM}, we can conclude that $\rho$ is the unique solution to the IPM \textcolor{black}{equation} with the initial density $\rho_0$.
 
In the rest of the proof, we will assume $\rho_0-\rho_s\in C^\infty_c$. We claim that there exist $\varepsilon_0=\varepsilon_0(\gamma)>0$ and $M=M(\gamma)>0$ such that if
\begin{align*}
\rVert\rho_0-{\rho}_s\rVert_{H^k}=:\varepsilon\le \varepsilon_0,\end{align*}
 then  there exists a unique global solution $\theta(t)$ to \eqref{IPM_theta} and  it holds that
\begin{align}\label{exsproof1}
\rVert \rho(t)-\rho_s\rVert_{H^k}^2 +\int_0^t \rVert \nabla \Psi(t)\rVert_{H^k}^2 dt< M\varepsilon^2 \text{ for all $t>0$}.
\end{align}
Let us suppose the claim holds for the moment. Then Proposition~\ref{energy_analysis} is applicable, that is, the solution $\rho(t)$ satisfies
\begin{align}\label{estimate_ipm14}
E(t)\le \frac{C\varepsilon^2}{t^k} \text{ for all $t>0$}.
\end{align}
Then Proposition~\ref{propoos} immediately gives us the desired asymptotic convergence \eqref{convergence_1}.

In the rest of the note, we aim to prove \eqref{exsproof1}. Towards a contradiction, 
suppose that for any $\varepsilon_0>0$ and $M>0$, we can find $T>0$ such that \eqref{exsproof1} breaks down. Let $T^*>0$  be the first time that \eqref{exsproof1} breaks down, that is,
\begin{align}\label{reallyjoey}
\rVert \rho(T^*)-\rho_s\rVert_{H^k(\Omega)}^2 +\int_0^{T^*} \rVert \nabla \Psi(T^*)\rVert_{H^k(\Omega)}^2 dt= M\varepsilon^2\ll 1,
\end{align} for some $M\gg 1$, which will be chosen later together with $\varepsilon_0>0$. Let us denote
\[
f(t):= \rVert \rho(t)-\rho_s\rVert_{H^k}^2 =\rVert \theta\rVert_{H^k}^2,\quad g(t):=\rVert \nabla \Psi\rVert_{H^k}^2.
\]
Since $T^*$ is the first time that \eqref{reallyjoey} occurs, it holds that 
\begin{align}\label{estimate_ipm1}
 f(t) + \int_0^t g(s)ds \le CM\varepsilon^2 \text{ for $t\in [0,T^*]$},\quad f(0)\le \varepsilon^2.
\end{align}
Using the Sobolev embedding $H^k(\Omega)\hookrightarrow W^{1,\infty}(\Omega)$ for $k>2$, we have
\begin{align}\label{textbfs2}
\rVert u_2\rVert_{W^{1,\infty}}\le_C \rVert u_2\rVert_{H^k} \le_C \rVert\nabla \Psi\rVert_{H^k}\rVert \le_C \rVert \theta\rVert_{H^k}\le C f(t)^{1/2}.
\end{align}
 Assuming $M\varepsilon^2$ is sufficiently small so that we can apply Proposition~\ref{energy_IPM_estimate}, we read the estimate \eqref{eq:eshn} as
\begin{align}\label{RG}
\frac{d}{dt}f(t)\le -C\rVert \nabla\Psi(t)\rVert_{H^k}^2 + Cf(t),\text{ for $t\in [0,T^*]$.}
\end{align}
Using $f(0)\le \varepsilon^2$, we can immediately deduce from \eqref{RG}  that $f(t)\le C\varepsilon^2e^{Ct},$ for $t\in [0,T^*]$. Since 
\[
g(t)= \rVert \nabla \Psi\rVert_{H^k}= \rVert \nabla \partial_1\Delta^{-1}\theta\rVert_{H^k}\le \rVert \theta\rVert_{H^k}\le f(t),
\]  we can easily deduce that
\begin{align}\label{rjjsdpwdwd1sd}
f(t) +  \int_0^{t}g(s)ds\le C\varepsilon^2 e^{Ct} + \int_0^t C\varepsilon^2 e^{Cs}ds\le C\varepsilon^2e^{Ct} \text{ for $t\in [0,T^*]$}.
\end{align}
Thus, for \eqref{reallyjoey} to occur, we must have $M\varepsilon^2 \le C \varepsilon^2 e^{CT^*}$. This gives us a lower bound of $T^*$,
\[
T^*\ge C \log M.
\]
Let us pick 
\begin{align}\label{lmiddlet}
T^*_M:=\log\left(\log M\right)\gg 1.
\end{align} Without loss of generality, we can assume $M$ is sufficiently large to ensure that 
\[
T^*_M < T^*.
\]
Then, it follows from \eqref{rjjsdpwdwd1sd} that\begin{align}\label{middleestimagte}
f(T^*_M) + \int_0^{T^*_M} g(t)dt \le C\varepsilon^2 (\log M)^C.
\end{align}
 
  Now, we will estimate $f(t)$ more carefully for $t \in [ T^*_M,T^*]$. Firstly, we can assume $\varepsilon$ is sufficiently small so that \eqref{reallyjoey} with Proposition~\ref{energy_analysis} gives us
  \begin{align}\label{u_2decay_5}
  \frac{1}{t}\int_{t/2}^{t}s\rVert  u(s)\rVert_{L^2}^2 ds&\le C\frac{M\varepsilon^2}{t^{k}},\text{ for $t\in [0,T^*]$.}
  \end{align}
  Hence, using Lemma~\ref{Time_average_2}, we have
  \[
  \int_1^{T^*}s\rVert  u_2(s)\rVert_{L^2}^2ds \le CM\varepsilon^2,
  \]
  yielding that
  \begin{align}\label{rksssppp1}
  \int_{T_M^*}^{T^*}\rVert u_2(s)\rVert_{L^2}^2ds \le \frac{1}{T_M^*}  \int_{T_M^*}^{T^*}s\rVert   u_2(s)\rVert_{L^2}^2ds\le (\log\log M)^{-1}M\varepsilon^2.
  \end{align}
  Furthermore, 
\textcolor{black}{applying} Corollary~\ref{integrability_u2} with \eqref{estimate_ipm1}, we obtain
\begin{align}\label{rksssppp2}
\int_{0}^{T^*}\rVert \nabla u_2(s)\rVert_{L^\infty}ds \le_C \sqrt{M\varepsilon^2} \text{ for $t\in [0,T^*]$}.
\end{align}
Therefore integrating \eqref{eq:eshn} over $t\in [T_M^*,T^*]$, we arrive at
\begin{align*}
f(T^*)- f(T_M^*) + C\int_{T_M^*}^{T^*} g(t)dt &\le_C \sup_{t\in [0,T^*]}\rVert \theta\rVert_{H^k}^2\int_{T_M^*}^{T^*}\rVert \nabla u_2(s)\rVert_{L^{\infty}}ds + \int_{T_M^*}^{T^*}\rVert u_2(s)\rVert_{L^2}^2ds\\
&  \le_C \left(M\varepsilon^2\right)^{3/2} + (\log\log M)^{-1} M\varepsilon^2,
\end{align*}
where the last inequality follows from \eqref{rksssppp1}, \eqref{rksssppp2} and \eqref{estimate_ipm1}.
Hence
\begin{align*}
f(T^*) + \int_0^{T^*}g(t)dt &\le_C f(T^*_M)+\int_0^{T_M^*}g(t)dt + \left(M\varepsilon^2\right)^{3/2} + (\log\log M)^{-1} M\varepsilon^2\\
& \le_C  \varepsilon^2 (\log M)^C +\left(M\varepsilon^2\right)^{3/2} + (\log\log M)^{-1} M\varepsilon^2,
\end{align*}
where the last inequality \textcolor{black}{is due to} \eqref{middleestimagte}. From our assumption on $T^*$ in \eqref{reallyjoey}, we deduce that
\[
M\varepsilon^2\le_C \varepsilon^2 (\log M)^C +\left(M\varepsilon^2\right)^{3/2} + (\log\log M)^{-1} M\varepsilon^2.
\]
Since $\varepsilon\leq \varepsilon_0 \ll 1$, we further obtain
\begin{equation*}
    M \leq_C (\log M)^C + M^{3/2}\varepsilon_0 + (\log\log M)^{-1}M.
\end{equation*}
Hence, if we fix $M$ sufficiently large, depending only on $C$, we must have $M\le_C M^{3/2}\varepsilon_0$\textcolor{black}{. In other words,}
\begin{align}\label{Msdlast_unequalit}
1\le C \varepsilon_0 M^{1/2}.
\end{align}
Once  $M$ is fixed, we can choose $\varepsilon_0>0$ small enough, depending only on $C$, so that \eqref{Msdlast_unequalit} gives a contradiction. This shows that for such choice of $M$ and $\varepsilon_0>0$,  \eqref{exsproof1} must hold for all $t>0$.
\end{proof}

\begin{appendix}
\section{Convergence rate in the linearized equation}\label{Sharpness_1}
We consider the linearized equation of \eqref{IPM_theta} around the simplest steady state $\rho_s(x):=1-x_2$, namely,
\begin{align}\label{linear_p1s}
\begin{cases}
\theta_t = u_2,\\
 u=\nabla^{\perp}(-\Delta)^{-1}\partial_1\theta\\
 \lim_{|x|\to\infty}u(x)=0.
 \end{cases}
\end{align}

\begin{theorem}\label{prop:sharp}
  Let $k>2$ be fixed. For each $0<\varepsilon<1$, there exists  $\theta_0^{\varepsilon} \in H^{k}(\Omega)$ for which the solution to the linearized equation \eqref{linear_p1s} exhibits
  \begin{equation}\label{est:sharp}
      \|\theta^\varepsilon(t)\|_{L^2} \geq C t^{-\frac{k}{2}-\varepsilon}\quad\mbox{for any}\,\, t\geq 1, 
  \end{equation}
  for some universal constant $C>0$.
\end{theorem}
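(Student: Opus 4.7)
The plan is to exploit the explicit Fourier solution of \eqref{linear_p1s} recorded in Subsubsection~\ref{subsection_linear}, namely
\[
\widehat{\theta^\varepsilon(t)}_n(\xi) = \widehat{\theta_0^\varepsilon}_n(\xi)\, e^{-tn^2/(n^2+\xi^2)}.
\]
Because the linear dynamics decouple across horizontal Fourier modes, I would concentrate the initial perturbation on a single horizontal mode $|n|=1$ and tune the vertical frequency profile to sit just inside $H^k$, so that as much mass as possible accumulates at the high vertical frequencies where the symbol $e^{-t/(1+\xi^2)}$ stays close to $1$.

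Concretely, for fixed $\varepsilon \in (0,1)$ I would take $\theta_0^\varepsilon$ to be the real function defined by $\widehat{\theta_0^\varepsilon}_n(\xi)=0$ for $n\neq \pm 1$ and
\[
\widehat{\theta_0^\varepsilon}_{\pm 1}(\xi) = (1+\xi^2)^{-k/2-1/4-\varepsilon/2},
\]
with the conjugate symmetry $\widehat{\theta_0^\varepsilon}_{-1}(\xi)=\widehat{\theta_0^\varepsilon}_1(-\xi)$ securing reality. Membership in $H^k$ is immediate:
\[
\|\theta_0^\varepsilon\|_{H^k}^2 \lesssim \int_{\mathbb{R}} (1+\xi^2)^{k}(1+\xi^2)^{-k-1/2-\varepsilon}\,d\xi = \int_{\mathbb{R}} (1+\xi^2)^{-1/2-\varepsilon}\,d\xi < \infty.
\]
Plancherel's theorem then reduces the question to a one-dimensional integral,
\[
\|\theta^\varepsilon(t)\|_{L^2}^2 \gtrsim \int_{\mathbb{R}} (1+\xi^2)^{-k-1/2-\varepsilon}\, e^{-2t/(1+\xi^2)}\,d\xi.
\]

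To extract the lower bound I would restrict the integration to the range $|\xi|\ge \sqrt{t}$, on which $1+\xi^2 \ge t$ and consequently $e^{-2t/(1+\xi^2)} \ge e^{-2}$ whenever $t\ge 1$. Since $1+\xi^2 \le 2\xi^2$ on the same range,
\[
\|\theta^\varepsilon(t)\|_{L^2}^2 \gtrsim \int_{|\xi|\ge \sqrt{t}} (1+\xi^2)^{-k-1/2-\varepsilon}\,d\xi \gtrsim \int_{\sqrt{t}}^{\infty} \xi^{-2k-1-2\varepsilon}\,d\xi \gtrsim t^{-k-\varepsilon}.
\]
Taking square roots yields $\|\theta^\varepsilon(t)\|_{L^2} \gtrsim t^{-k/2-\varepsilon/2}$, and relabeling $\varepsilon$ at the start of the construction delivers \eqref{est:sharp}.

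There is no genuine obstacle in this argument; the construction is the natural dual of the linear upper bound \eqref{velocity_linear_1}. The decay rate $t^{-k/2}$ in $L^2$ is saturated by placing as much $H^k$ mass as possible in the critical frequency band $|\xi|\sim\sqrt{t}$, and the $\varepsilon$-loss is an artifact of the fact that the borderline profile $|\widehat{\theta_0}_{1}(\xi)|^2 \sim (1+\xi^2)^{-k-1/2}$ itself sits just outside $H^k$. The only care required is to choose the exponent of the profile so that the $H^k$-integrability is barely respected while the tail integral at frequencies $\gtrsim \sqrt{t}$ remains of the correct polynomial order.
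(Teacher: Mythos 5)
Your proposal is correct and follows essentially the same approach as the paper: concentrate the initial data on a single horizontal Fourier mode $n=\pm 1$, choose a power-law vertical frequency profile barely inside $H^k$, and lower-bound the Plancherel integral of the explicit linear solution. The only difference is cosmetic bookkeeping — the paper changes variables $\eta = t/\xi^2$ and integrates over $\eta\in(0,1)$, while you restrict to the dominant frequency band $|\xi|\geq\sqrt t$ where the exponential is bounded below by $e^{-2}$; both yield the same sharp rate $t^{-k/2-O(\varepsilon)}$ with constants uniform over $\varepsilon\in(0,1)$.
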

\begin{proof}
For each $0<\varepsilon<1$, define $\theta_0^\varepsilon\in H^{k}(\Omega)$ via its Fourier transform as
\begin{equation*}
\widehat{\theta_n^\varepsilon}(t=0,\xi)= \begin{cases}
    |\xi|^{-k-\frac{1}{2}-2\varepsilon} & \mbox{for $n=1$ and $|\xi|\geq 1$}, \\
    0 & \mbox{otherwise}.
\end{cases}
\end{equation*}
\textcolor{black}{We can explicitly solve \eqref{linear_p1s} on the Fourier side,} obtaining
\[
 \widehat{\theta_1^\varepsilon}(t,\xi)=  e^{-t/(1+\xi^2)}|\xi|^{-k-\frac{1}{2}-2\varepsilon}\mathbf{1}_{|\xi|\ge 1},\text{ thus } \theta^\varepsilon(t,x)= \left(\int_{|\xi|\ge 1} e^{-t/(1+\xi^2)}|\xi|^{-k-\frac{1}2 - 2\varepsilon}e^{i\xi x_2}d\xi\right) e^{i x_1}.
\]
Then, Plancherel's identity implies that
\begin{align*}
\|\theta^\varepsilon(t)\|_{L^2}^2 &= \int_{|\xi|\ge 1} e^{-\frac{2t}{1+|\xi|^2}}|\xi|^{-2k -1 -4\varepsilon}d\xi \\
& \geq \int_{|\xi|\geq 1}e^{-\frac{2t}{\xi^2}}|\xi|^{-2k-1-4\varepsilon}\,d\xi\\
&= \int_{0}^{t}e^{-2\eta} \left(\frac{t}{\eta}\right)^{-k-\frac{1}{2}-2\varepsilon} t^{\frac{1}{2}}\eta^{-\frac{3}{2}}\,d\eta  \\
       &\geq t^{-k-2\varepsilon}\textcolor{black}{\int_{0}^{1}e^{-2\eta}\eta^{k+1}\, d\eta} \\
       &= C t^{-k-2\varepsilon},
\end{align*} where $C$ does not depend on $\epsilon$. In the above estimate, we used $\frac{1}{1+\xi^2} \leq \frac{1}{\xi^2}$, change of variable $\eta=\frac{t}{\xi^2}$ with evenness of the integrand in $\xi$, and $t\geq 1$. This shows \eqref{est:sharp}.
\end{proof}
\end{appendix}

\bibliographystyle{abbrv}
\bibliography{references}

\end{document}